\newcommand{\R}{\mathbb{R}}
\newcommand{\N}{\mathbb{N}}
\DeclareMathOperator{\dist}{dist\,}
\DeclareMathOperator{\diam}{diam\,}
\DeclareMathOperator{\co}{co}
\newcommand{\cconv}{\overline{\co}}
\renewcommand{\geq}{\geqslant}
\renewcommand{\leq}{\leqslant}
\newcommand{\norm}[1]{\left\Vert#1\right\Vert}
\newcommand{\Lip}{{\mathrm{Lip}}_0}
\newcommand{\sign}{\operatorname{sign}}
\newcommand{\F}[1]{\mathcal{F}(#1)}
\newcommand{\cco}{\overline{\operatorname{co}}}
\newcommand{\ext}[1]{\operatorname{ext}\left(#1\right)}
\newcommand{\preext}[1]{\operatorname{pre-ext}\left(#1\right)}
\newcommand{\dent}[1]{\operatorname{dent}\left(#1\right)}
\newcommand{\Mol}[1]{\operatorname{Mol}\left(#1\right)}
\newcommand{\spann}{\mathop{\mathrm{span}}\nolimits}
\newcommand{\abs}[1]{\left|{#1}\right|}
\newcommand{\set}[1]{\left\{{#1}\right\}}
\newcommand{\pare}[1]{\left({#1}\right)}
\newcommand{\wsconv}{\stackrel{w^*}{\longrightarrow}}
\newcommand{\cl}[1]{\overline{#1}}
\newcommand{\wcl}[1]{\overline{#1}^w}
\newcommand{\wscl}[1]{\overline{#1}^{w^*}}
\newcommand{\geqn}{\succcurlyeq}
\newcommand{\ape}[1]{\operatorname{ape}\left(#1\right)}
\newtheorem{theorem}{Theorem}[section]
\newtheorem{lemma}[theorem]{Lemma}
\newtheorem{proposition}[theorem]{Proposition}
\newtheorem{corollary}[theorem]{Corollary}
\theoremstyle{definition}
\newtheorem{definition}[theorem]{Definition}
\newtheorem{example}[theorem]{Example}
\newtheorem{question}[theorem]{Question}
\theoremstyle{remark}
\newtheorem{remark}[theorem]{Remark}
\numberwithin{equation}{section}
\def\fnote#1{\footnote}
\def\ignora#1{}
\def\n3#1{\left\vert  \! \left\vert \! \left\vert \, #1 \, \right\vert \!
  \right\vert \! \right\vert }
\newcommand{\iten}{\ensuremath{\widehat{\otimes}_\varepsilon}}
\newcommand{\pten}{\ensuremath{\widehat{\otimes}_\pi}}
\begin{document}

\title{ Almost preserved extreme points }

\author[R. J. Aliaga]{Ram\'on J. Aliaga}
\address[R. J. Aliaga]{Instituto Universitario de Matem\'atica Pura y Aplicada,
Universitat Polit\`ecnica de Val\`encia,
Camino de Vera S/N,
46022 Valencia, Spain}
\email{raalva@upv.es}
\urladdr{\url{https://raalva.wordpress.com}}

\author[L.C. García-Lirola]{Luis C. Garc\'ia-Lirola}
\address[L.C. García-Lirola]{Departamento de Matemáticas, Universidad de Zaragoza, 50009, Zaragoza, Spain} 
\email{\texttt{luiscarlos@unizar.es}}
\urladdr{\url{https://personal.unizar.es/luiscarlos/}}

\author[J. Guerrero-Viu]{Juan Guerrero-Viu}
\address[J. Guerrero-Viu]{Departamento de Matemáticas, Universidad de Zaragoza, 50009, Zaragoza, Spain} 
\email{j.guerrero@unizar.es}

\author[M. Raja]{Mat\'ias Raja}
\address[M. Raja]{Departamento de Matemáticas, Universidad de Murcia, 30100, Murcia, Spain}
\email{matias@um.es}

\author[A. Rueda Zoca]{Abraham Rueda Zoca }
\address[A. Rueda Zoca]{Universidad de Granada, Facultad de Ciencias.
Departamento de An\'{a}lisis Matem\'{a}tico, 18071-Granada
(Spain)} \email{ abrahamrueda@ugr.es}
\urladdr{\url{https://arzenglish.wordpress.com}}

\subjclass[2020]{46B20; 46B22; 46B28}

\keywords{Almost preserved extreme point; Extreme point; Radon-Nikod\'{y}m property; Lipschitz-free space; tensor product}

\begin{abstract}
In this paper we introduce the notion of an almost preserved extreme point (APEP) of a set as a weakening of the concept of preserved extreme points, and we systematically study such points. As a main result, we prove that a Banach space $X$ has the Radon-Nikod\'{y}m property (RNP) if and only if every closed, convex, and bounded subset of the space has an APEP. Similarly, we prove that $X$ has the RNP if and only if the unit ball of every equivalent renorming has an APEP. We further investigate APEPs of the unit ball of classical Banach spaces, absolute sums, Lipschitz-free spaces, and projective tensor products. In the latter setting, our work also describes the preserved extreme points in the unit ball under the assumption that every bounded operator is compact, thereby partially solving an open problem.
\end{abstract}

\maketitle

\markboth{ALIAGA, GARC\'IA-LIROLA, GUERRERO-VIU, RAJA AND RUEDA ZOCA}{ALMOST PRESERVED EXTREME POINTS}

\tableofcontents

\section{Introduction}

A longstanding open question in the geometry of Banach spaces is whether the Radon-Nikod\'{y}m property (RNP) and the Krein-Milman property (KMP) are equivalent. Recall that a Banach space $X$ has the KMP if every closed, bounded, and convex subset of $X$ has an extreme point. On the other hand, one of the equivalent reformulations of the RNP is that every closed, convex and bounded subset of $X$ has a denting point (see e.g. \cite[Section VII.6]{disuhl}). Since denting points are always extreme, the implication RNP $\Rightarrow$ KMP is clear. Whether the converse implication holds has motivated a vast literature since the 1980s (see e.g. \cite{bourgain79,bourtal81,cas88,jam93,lome24,scha87} and references therein).

Observe that there exists a fundamental difference between the concept of denting point (which is a metric notion) and the concept of extreme point (which is a linear notion). This distinction explains the difficulty behind the open question whether the KMP implies the RNP. Halfway between extreme points and denting points, we have an intermediate notion which reveals a rather better interplay with the RNP.

Given a Banach space $X$ and a bounded, closed and convex subset $C$ of $X$, we say that $x_0\in C$ is a \textit{preserved extreme point} of $C$ (sometimes called \textit{weak$^*$-extreme point}) if $x_0$ is an extreme point of $\overline{C}^{w^*}$, where the weak$^*$ closure is taken in $X^{**}$. 

Clearly, if $x_0\in C$ is a preserved extreme point of $C$ then it is an extreme point. Moreover, denting points are preserved extreme points. This can be easily seen from the fact that a point $x_0\in C$ is a preserved extreme point if, and only if, given any weak neighbourhood $W$ of $C$ such that $x_0\in W$ there exists a slice $S$ of $C$ with $x_0\in S\subseteq W$ (see e.g. \cite[Chapter 0]{lctesis}).

Coming back to the RNP, since denting points are preserved extreme points, it follows that if a Banach space $X$ has the RNP then every bounded, closed, and convex subset $C$ of $X$ has a preserved extreme point. This time the converse is known to be true. For instance, \cite[Theorem 1.1]{scsewe89} establishes that if $X$ is a Banach space failing the RNP then, given $\varepsilon>0$, there exists a closed, convex and bounded subset $C$ of $X$ such that
$$\dist\left(\ext{\overline{C}^{w^*}},X\right)>\frac{1}{2}-\varepsilon,$$
where $\ext{\overline{C}^{w^*}}$ stands for the set of all the extreme points of $\overline{C}^{w^*}$. 

In order to point out the difference between the notions of denting and preserved extreme points, we remark that given a closed, bounded and convex subset $C$ of $X$ and $x_0\in C$, then:
\begin{itemize}
    \item $x_0$ is a denting point in $C$ when the slices of $C$ containing $x_0$ form a neighbourhood system of $x_0$ for the norm topology on $C$.
    \item $x_0$ is a preserved extreme point in $C$ when the slices of $C$ containing $x_0$ form a neighbourhood system of $x_0$ for the weak topology on $C$.
\end{itemize}

To obtain a more geometric description of the RNP, we will consider the following weakening of the concept of preserved extreme point. All along this text we will assume (with no explicit mention) that all the slices we consider are non-empty.

\begin{definition}
Let $X$ be a Banach space and let $C$ be a bounded, closed and convex subset of $X$. We say that a point $x_0\in C$ is an \textit{almost preserved-extreme point (APEP)} if, for every weakly open subset $W$ of $C$ containing $x_0$ there exists a slice $S$ of $C$ such that $S\subseteq W$.

The set of APEPs of $C$ will be denoted $\ape{C}$.
\end{definition}

Note that, unlike preserved extreme points, APEPs only require that every neighbourhood contains a slice, but the slice does not need to contain $x_0$. Observe also that, in our definition, we consider a weakening of the notion of preserved extreme point instead of an intermediate property between preserved and almost preserved-extreme point (thus, APEP should be read as A(PEP) instead of (AP)EP; we add a hyphen between \textit{preserved} and \textit{extreme} in order to highlight this). This motivation will be confirmed when we describe examples of APEP which fails to be extreme (c.f. e.g. Example~\ref{exam:findinapepnoextre}).

At this point it is clear that if $X$ has the RNP then every closed, convex and bounded subset of $X$ has an APEP. A natural question is whether the converse is also true. We will give an affirmative answer in Section~\ref{section:charnp}.

This paper presents an intensive study of the notion of APEPs. We now outline the contents of the paper. In Section~\ref{section:notation} we  introduce all the necessary notation and preliminary results that we need for the main sections. In Section~\ref{section:firstresults} we obtain the first results about APEPs. In Theorem~\ref{theo:caraAPEP} we prove that, for a closed, convex and bounded set $C$ of a Banach space $X$, $x_0\in C$ is an APEP if and only if $x_0$ is in the weak$^*$ closure (in $X^{**}$) of extreme points of $\overline{C}^{w^*}$. This characterisation will be exploited throughout the text. In Section~\ref{section:classicalspaces} we study APEPs in some classical Banach spaces. We characterise the set of APEPs of the unit ball of $L_p(\mu)$ spaces for $1<p<\infty$ (Example~\ref{exam:lpinfidim}), in $L_1(\mu)$ (Theorem~\ref{theo:charapepL1}) and in $C(K)$ spaces (Theorem~\ref{theo:C(K)charapep}). We also study APEP points in the unit ball of $\ell_p$-sums of Banach spaces, for $1\leq p\leq \infty$, in Subsection~\ref{subsection:absolutesums}. 

In Section~\ref{section:charnp}, we prove that the notion of APEP has the desired interplay with the RNP and we provide new characterisations of that property. First of all, we prove in Theorem~\ref{theo:cararnpapepsubconjuntos} that a Banach space $X$ has the RNP if and only if every closed, convex, and bounded subset of $X$ has an APEP. Using this result and renorming techniques from \cite{scsewe89}, we show that if a Banach space $X$ fails the RNP then there exists an equivalent renorming of $X$ whose unit ball fails to have any APEP (Theorem~\ref{theo:renormasinapepnornp}). As a consequence we get a second characterisation of the RNP using APEP: a Banach space $X$ has the RNP if and only if the unit ball of every equivalent renorming of $X$ contains an APEP (Corollary~\ref{cor:cararnpequireno}). 

In Section~\ref{section:Lipschitzfree}, we move to the study of APEPs of the unit ball of Lipschitz-free spaces. We prove that any APEP must be either an elementary molecule or $0$. We characterise APEP molecules as the norm limit of denting points, and provide examples of APEPs that are unpreserved extreme points.

In Section~\ref{section:tensor} we make an intensive study of the APEP in projective tensor products, and more precisely in sets of the form $\cconv(C\otimes D)$ where $C\subseteq X$ and $D\subseteq Y$ are symmetric, bounded, closed, and convex subsets. Theorem~\ref{theo:tensornece} proves  that if $z$ is an APEP of $\cconv(C\otimes D)$ then $z\in C\otimes D$. Conversely, we prove that $x_0\otimes y_0$ is an APEP of $\cconv(C\otimes D)$ if either $x_0$ is denting in $C$ and $y_0$ is APEP in $D$ (Theorem~\ref{theo:tensordienteAPEP}) or if $x_0$ and $y_0$ are APEP in $C$ and $D$ respectively and $x_0\otimes y_0$ has a compact neighbourhood system for the weak topology (Theorem~\ref{theo:tensorAPEPcompactneigh}). We close the section by analysing the APEP and the preserved extreme points under the additional assumption that every operator $T\colon X\longrightarrow Y^*$ is compact. Under this assumption, we prove that if $C$ and $D$ have non-empty interior and $z\neq 0$ is a point in $\cconv(C\otimes D)$, then $z$ is an APEP if and only if $z=x_0\otimes y_0$, where $x_0$ and $y_0$ are APEPs in $C$ and $D$ respectively. Similarly, under the same assumptions, we prove that $z$ is a preserved extreme point if and only if $z=x_0\otimes y_0$ for $x_0$ and $y_0$ being preserved extreme points in $C$ and $D$ respectively. This provides a positive solution to \cite[Question 3.9]{ggmr23}, where it is asked whether $x_0\otimes y_0$ is a preserved extreme point of $B_{X\pten Y}$ if both $x_0$ and $y_0$ are preserved extreme points in $B_X$ and $B_Y$, under the assumption that every bounded operator from $X$ to $Y^*$ is compact. 

Finally, we collect in Section~\ref{section:remarks} some remarks and open questions from our work. We also present after Lemma~\ref{lemma:disw*closure} a second proof of Theorem~\ref{theo:cararnpapepsubconjuntos} with a different approach to the one exhibited in Section~\ref{section:charnp}.

\medskip
\section{Notation and preliminary results}\label{section:notation}
We will only consider real Banach spaces. Given a Banach space $X$, we denote by $B_X$ and $S_X$ its closed unit ball and unit sphere respectively. We also denote by $X^*$ the topological dual of $X$. Given $E\subseteq X$, we write $\spann(E)$ for the linear span of $E$. We denote by $L(X,Y)$, $K(X, Y)$, and  $F(X, Y)$ the spaces of bounded, compact, and finite-rank operators from $X$ to $Y$, respectively.

Given a subset $C$ of a Banach space $X$ we denote by $\co(C)$ (resp. $\cconv(C)$) the convex hull (respectively the closed convex hull) of $C$. Given $x^*\in X^*$ and $\alpha>0$, we denote by
\[ S(C, x^*, \alpha)=\{x\in C: x^*(x)>\sup x^*(C)-\alpha\}\]
the (open) slice of $C$ produced by $x^*$. If $X=Y^*$ is a dual Banach space  and $x^*\in Y\subseteq Y^{**}=X^*$, the above set will be called a \textit{weak$^*$ slice}.

\subsection{Extremal structure and Radon-Nikod\'{y}m property}

Given a subset $C$ of $X$, a point $x_0\in C$ is said to be \textit{an extreme point in $C$} if it is not the center of any non-degenerate line segment in $C$; in other words, if $x_0=\frac{y+z}{2}$ for $y,z\in C$ implies $y=z=x_0$. We denote by $\ext{C}$ the set of all the extreme points in $C$.

Let us point out here several classical results in Banach space theory which shows the importance of extreme points in compact convex sets and that will be used throughout the text without explicit reference. 

\begin{enumerate}
    \item (Krein-Milman theorem \cite[Theorem 3.65]{checos}) If $C$ is a weakly compact and convex subset of $X$ then $C=\cconv(\ext{C})$. Similarly, if $C$ is a weak* compact convex subset of $X^*$ then $C=\cconv^{w^*}(\ext{C})$.
    \item (Choquet lemma \cite[Lemma 3.69]{checos}) If $C$ is a weakly compact and convex subset of $X$, then for every $x\in \ext{C}$ the slices of $C$ containing $x$ form a neighbourhood base of $x$ in the weak topology of $C$. The same result holds for the weak$^*$ topology of $X^*$ replacing slices with weak$^*$ slices.
    \item (Milman theorem \cite[Theorem 3.66]{checos}) If $C$ is a weakly compact and convex subset of $X$ and $B\subseteq C$ is such that $\cconv(B)=C$, then $\ext{C}\subseteq \overline{B}^{w}$. Similarly, if $C\subseteq X^*$ is weak* compact and convex and $B\subseteq C$ is such that $\cconv^{w^*}(B)=C$, then $\ext{C}\subseteq \overline{B}^{w^*}$.
\end{enumerate}

In the sequel we will present some strengthenings of the concept of extreme point. Our main reference will be \cite[Chapter 0]{lctesis}.

Given a bounded set $C\subseteq X$, we say that $x_0\in C$ is a \textit{preserved extreme point} in $C$ if $x_0$ is an extreme point of the weak$^*$ closure of $C$ in $X^{**}$ (that is, $x_0\in \ext{\overline{C}^{w^*}})$. We denote by $\preext{C}$ the set of all preserved extreme points of $C$.

For closed, bounded and convex $C\subseteq X$, it follows from Choquet's lemma that $x_0\in\preext{C}$ if and only if slices of $C$ containing $x_0$ form a neighbourhood base of $x_0$ for the weak topology of $C$.

If, in the above characterisation, we replace the weak topology with the norm topology, we arrive at the notion of \textit{denting point}. That is, a point $x_0\in C$ is said to be a \textit{denting point} if, for every $\varepsilon>0$, there exists a slice $S$ of $C$ with $x_0\in S$ and $\diam(S)<\varepsilon$, where $\diam(S)$ stands for the diameter of $S$. We denote by $\dent{C}$ the set of all denting points of $C$.

The set of denting points plays a very important role in the Banach spaces with the \textit{Radon-Nikod\'{y}m property}. 
Let us recall that the Radon-Nikod\'{y}m property, from now on RNP, was originally defined for Banach spaces by the validity of a vector version of the classic Radon-Nikod\'{y}m theorem on derivation of measures. Namely, $X$ has the RNP if for any $\sigma$-finite measure space $(\Omega, \Sigma, \mu)$ and any $\mu$-continuous vector measure $\nu: \Sigma \rightarrow X$ 
of bounded variation, there exists a Bochner integrable function $f\colon \Omega \rightarrow X$ such that 
\begin{equation}\label{eq:deriv} \nu(A) =\int_A f \, d\mu \end{equation}
for every $A \in \Sigma$, see \cite{disuhl} or \cite{Bourgin} for details. Moreover, the RNP can be localized on closed convex subsets of $X$. We say that $C \subseteq X$ has the RNP if for any vector measure $\nu: \Sigma \rightarrow X$ that is absolutely continuous with respect to $\mu$, as before, and having average range in $C$, meaning that $\nu(A)/\mu(A) \in C$ for any $A \in \Sigma$ with $\mu(A)>0$, there is a Bochner integrable function $f\colon \Omega\to X$ satisfying \eqref{eq:deriv}. Note that for $C \subseteq X$ bounded and $\mu$ finite, the average range condition
implies 
$\mu$-continuity and bounded variation for $\nu$.
Another important observation is that the RNP can be witnessed just by the Lebesgue measure on $\R$ or $[0,1]$.

There are many characterizations of the RNP: differentiation of vector valued functions, integral representation of operators from $L^1(\mu)$ to a Banach space, convergence of vector valued martingales or descriptive topology. However, we are more interested in the geometrical characterizations of the RNP. A closed convex bounded subset $C \subseteq X$ has the RNP if and only if every non-empty (convex) subset has arbitrarily small slices, in such a case we say that $C$ is (hereditarily) \textit{dentable}. It turns out that if $C$ has the RNP, then it or any of its non-empty subsets have denting points and, even, \textit{strongly exposed points} (a denting point where the arbitrarily small slices can be taken parallel), see \cite{Bourgin} for more information.

The equivalence among the different characterizations of the RNP is far from trivial. Despite the geometrical flavor of the main notion and techniques of this paper, we will occasionally appeal to measure theoretical characterizations of the RNP to prove our results.

\subsection{Lipschitz-free spaces}\label{subsection:Lipschitz-free}

Let $(M,d)$ be a complete metric space where a distinguished ``base point'' $0\in M$ has been selected. The Lipschitz space $\Lip(M)$ is defined as the Banach space of all Lipschitz functions $f:M\to\R$ such that $f(0)=0$, endowed with the norm given by the best Lipschitz constant
$$
\norm{f} = \sup\set{\frac{f(x)-f(y)}{d(x,y)} : x\neq y\in M} .
$$
For each $x\in M$, the evaluation functional $\delta(x):f\mapsto f(x)$ belongs to the dual $\Lip(M)^*$. The \textit{Lipschitz-free space over $M$} is defined as the closed space $\F{M}:=\cl{\spann}\set{\delta(x):x\in M}$. It is not too hard to see that $\F{M}$ is, in fact, an isometric predual of $\Lip(M)$. We refer the reader to \cite{Weaver} for basic facts about Lipschitz and Lipschitz-free spaces.

The most important elements of $\F{M}$ are the so-called \textit{(elementary) molecules}, of the form
$$
m_{xy} := \frac{\delta(x)-\delta(y)}{d(x,y)}
$$
for $x\neq y\in M$. The set of molecules in $\F{M}$ will be denoted $\Mol{M}$. Molecules have norm $1$, and it follows easily from the Hahn-Banach separation theorem that $B_{\F{M}}=\cconv(\Mol{M})$ (see e.g. \cite[Proposition 3.29]{Weaver}). The weak closure of $\Mol{M}$ is either $\Mol{M}$ or $\Mol{M}\cup\set{0}$, depending on whether $M$ bi-Lipschitz embeds into some finite-dimensional Banach space or not (see \cite[Proposition 2.9]{gppr18} and \cite[Lemma 4.2]{GRZ}). Moreover, norm- and weak convergence agree on $\Mol{M}$: a net of molecules $(m_{x_sy_s})$ converges to $m_{xy}$ precisely when $x_s\to x$ and $y_s\to y$ (see e.g. \cite[Lemma 2.2]{gppr18} and \cite[Lemma 1.2]{Veeorg}).

Lipschitz-free spaces provide a convenient toolbox for the construction of Banach spaces with a predetermined extremal structure, because the various types of extremal objects in $B_{\F{M}}$ admit simple metric characterisations when $M$ is complete:
\begin{itemize}
\item All extreme points of $B_{\F{M}}$ are molecules \cite[Theorem 3.1]{APS3}.
\item A molecule $m_{xy}$ is extreme if and only if $d(x,p)+d(p,y)>d(x,y)$ whenever $p\neq x,y$ \cite[Theorem 1.1]{AliagaPernecka}.
\item Preserved extreme points and denting points of $B_{\F{M}}$ agree \cite[Theorem 2.4]{gppr18}.
\item A molecule $m_{xy}$ is denting if and only if for every $\varepsilon>0$ there is $\delta>0$ such that $d(x,p)+d(p,y)>d(x,y)+\delta$ whenever $d(x,p)>\varepsilon$ and $d(y,p)>\varepsilon$ \cite[Theorem 4.1]{AliagaGuirao}.
\end{itemize}

We will also need the following notion introduced in \cite{JRZ}: we say that a function $f\in S_{\Lip(M)}$ is \textit{local} if, for every $\varepsilon>0$, there exist $u\neq v\in M$ such that $f(m_{uv})>1-\varepsilon$ and $d(u,v)<\varepsilon$. This definition is a pointwise version of the notion of local metric space introduced in \cite{ikw07}, and it was introduced in order to study Daugavet points in $\F{M}$.

Let us point out our interests in local Lipschitz functions in the following remark for easy reference.

\begin{remark}\label{remark:slicelocaldiam}
If $f$ is local, then any slice $S(B_{\F{M}},f,\varepsilon)$ has diameter $2$. Even though this result is well known (and implicitly observed in (2)$\Rightarrow$(1) in \cite[Theorem 3.2]{JRZ}), let us briefly outline the proof for completeness: in the above situation, any such slice will contain molecules $m_{uv}$ for which $d(u,v)$ is arbitrarily small, so our claim follows from \cite[Theorem 2.6]{JRZ}.
\end{remark}

\subsection{Projective tensor products}\label{subsection:tensor}

The projective tensor product of $X$ and $Y$, denoted by $X \pten Y$, is the completion of the algebraic tensor product $X \otimes Y$ endowed with the norm
$$
\|z\|_{\pi} := \inf \left\{ \sum_{n=1}^k \|x_n\| \|y_n\|: z = \sum_{n=1}^k x_n \otimes y_n \right\},$$
where the infimum is taken over all such representations of $z$. The reason for taking the completion is that $X\otimes Y$ endowed with the projective norm is complete if and only if either $X$ or $Y$ is finite dimensional (see \cite[p. 43, Exercises 2.4 and 2.5]{Ryan}).

It is well known that $\|x \otimes y\|_{\pi} = \|x\| \|y\|$ for every $x \in X$, $y \in Y$, and that the closed unit ball of $X \pten Y$ is the closed convex hull of the set $B_X \otimes B_Y = \{ x \otimes y: x \in B_X, y \in B_Y \}$. Throughout the paper, we will use both facts without any explicit reference.

Observe that every $G\in L(X, Y^*)$ acts on $X \pten Y$ via
$$
G \left( \sum_{n=1}^{k} x_n \otimes y_n \right) = \sum_{n=1}^{k} G(x_n)(y_n),$$
for $\sum_{n=1}^{k} x_n \otimes y_n \in X \otimes Y$. This action establishes a linear isometry from $L(X,Y^*)$ onto $(X\pten Y)^*$ (see e.g. \cite[Theorem 2.9]{Ryan}). Throughout this paper we will use the isometric identification $(X\pten Y)^*= L(X,Y^*)$ without any explicit mention.

Recall that a Banach space $X$ has the \textit{approximation property} (AP)
if there exists a net $(S_\alpha)_\alpha$ in $F(X,X)$ such that $S_\alpha (x) \to x$ for all $x \in X$. It is not difficult to show that if either $X$ or $Y$ has the AP then $K(X,Y^*)$ is separating for $X\pten Y$ (c.f. e.g. \cite[Lemma 2.2]{ggmr23}).

Recall that given two Banach spaces $X$ and $Y$, the
\textit{injective tensor product} of $X$ and $Y$, denoted by
$X \iten Y$, is the completion of $X\otimes Y$ under the norm given by
\begin{equation*}
   \Vert u\Vert_{\varepsilon}:=\sup
   \left\{
      \left|\sum_{i=1}^n  x^*(x_i)y^*(y_i)
      \right| : x^*\in S_{X^*}, y^*\in S_{Y^*}
   \right\},
\end{equation*}
where $u=\sum_{i=1}^n x_i\otimes y_i$ (see \cite[Chapter 3]{Ryan} for background).
Every $u \in X \iten Y$ can be viewed as an operator $T_u \colon X^* \rightarrow Y$ which is weak$^*$-to-weak continuous. Under this point of view, the norm on the injective tensor product is nothing but the operator norm.

It is known that, given two Banach spaces $X$ and $Y$, we have $(X\iten Y)^*=X^*\pten Y^*$ if either $X^*$ or $Y^*$ has the RNP and either $X^*$ or $Y^*$ has the AP \cite[Theorem 5.33]{Ryan}.

\medskip
\section{First results and examples}\label{section:firstresults}

In this section we will provide the first general results, examples and characterisations of APEPs in Banach spaces. We begin with a simple fact that will be used in subsequent sections.

\begin{lemma}\label{lemma:apep_w_closed}
Let $X$ be a Banach space and $C\subseteq X$ be bounded, closed and convex. Then $\ape{C}$ is weakly closed.
\end{lemma}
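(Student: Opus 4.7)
The plan is to show that the complement $C\setminus\ape{C}$ is relatively weakly open in $C$. Since $C$ is norm-closed and convex, it is weakly closed in $X$, so once I establish that $\ape{C}$ is relatively weakly closed in $C$, it will automatically be weakly closed in $X$.

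The key observation is that the condition appearing in the definition of APEP, namely ``some non-empty slice of $C$ is contained in $W$'', is a property of the \emph{set} $W$ alone and does not involve the center $x_0$ in any way. So suppose $x_0\in C\setminus\ape{C}$. By definition, there exists a relatively weakly open subset $W$ of $C$ with $x_0\in W$ such that no non-empty slice of $C$ is contained in $W$. But then for every $y\in W$, this same $W$ is still a weakly open neighborhood of $y$ in $C$ which fails to contain any slice of $C$, and hence witnesses $y\notin\ape{C}$. This gives $W\subseteq C\setminus \ape{C}$, showing that $C\setminus \ape{C}$ is a weak neighborhood of each of its points.

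I do not anticipate any real obstacle; the argument is essentially an immediate unraveling of the definition. The only point requiring a moment of care is that ``weakly open subset of $C$'' must be interpreted in the relative weak topology on $C$, and that relative weak closedness inside $C$ passes to weak closedness in $X$ precisely because $C$ is itself weakly closed in $X$.
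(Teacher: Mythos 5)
Your proposal is correct and is essentially the same argument as the paper's, just phrased contrapositively: the paper shows directly that any weak-closure point of $\ape{C}$ in $C$ is itself an APEP, while you show the complement is relatively weakly open, and both hinge on the same observation that the slice-containment condition depends only on the neighbourhood $W$ and not on the point it surrounds. Your extra remark that relative weak closedness in $C$ yields weak closedness in $X$ because $C$ is convex and norm-closed is a careful touch that the paper leaves implicit.
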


\begin{proof}
Suppose that $x\in C$ belongs to the weak closure of $\ape{C}$; we have to prove that $x\in\ape{C}$. Let $W$ be a weak neighbourhood of $x$, and let us see that it contains a slice of $C$. By assumption, $W$ contains some $y\in\ape{C}$. Thus $W$ is a weak neighborhood of $y$ and, since $y$ is an APEP, is must contain a slice of $C$ as desired.
\end{proof}

This already provides us with examples of situations where APEPs are plentiful. One simple case is the unit ball of an infinite dimensional Banach space $X$ for which $\preext{B_X}=S_X$. Since $S_X$ is weakly dense in $B_X$ for every infinite dimensional Banach space $X$, we have the following consequence of Lemma \ref{lemma:apep_w_closed}.

\begin{proposition}\label{prop:everypointispreserved}
Let $X$ be an infinite dimensional Banach space such that $\preext{B_X}=S_X$. Then $\ape{B_X}=B_X$.
\end{proposition}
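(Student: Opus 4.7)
The plan is to combine Lemma~\ref{lemma:apep_w_closed} with the classical fact that the unit sphere of an infinite-dimensional Banach space is weakly dense in the unit ball, after observing that every preserved extreme point is automatically an APEP.

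First I would verify the obvious inclusion $\preext{B_X}\subseteq\ape{B_X}$. Indeed, if $x_0$ is a preserved extreme point of $B_X$, the characterisation recalled in the introduction (based on Choquet's lemma) tells us that the slices of $B_X$ containing $x_0$ form a neighbourhood base for the weak topology on $B_X$ at $x_0$. In particular, given any weak neighbourhood $W$ of $x_0$ in $B_X$, there exists a slice $S$ of $B_X$ with $x_0\in S\subseteq W$, so $S$ is a non-empty slice contained in $W$, which is exactly the definition of APEP. Under the hypothesis $\preext{B_X}=S_X$, this gives $S_X\subseteq\ape{B_X}$.

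Next I would invoke the standard fact that, in an infinite-dimensional Banach space, $S_X$ is weakly dense in $B_X$ (which follows, for instance, from the fact that every weakly open set in $B_X$ meeting the interior must be unbounded in the topology induced by finite codimensional subspaces, so it reaches the sphere). Combining this with Lemma~\ref{lemma:apep_w_closed}, which states that $\ape{B_X}$ is weakly closed, yields
\[
B_X=\overline{S_X}^{\,w}\subseteq\overline{\ape{B_X}}^{\,w}=\ape{B_X},
\]
and the reverse inclusion is trivial since $\ape{B_X}\subseteq B_X$ by definition. Hence $\ape{B_X}=B_X$.

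There is no real obstacle here: the only non-trivial ingredients are the weak density of $S_X$ in $B_X$ and the fact that preserved extreme points are APEPs, both of which are either classical or immediate from the definitions recalled in Section~\ref{section:firstresults}. The proof is essentially a two-line consequence of Lemma~\ref{lemma:apep_w_closed}.
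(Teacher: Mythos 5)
Your argument is exactly the paper's: the authors also note that preserved extreme points are APEPs, use the weak density of $S_X$ in $B_X$ for infinite-dimensional $X$, and conclude via Lemma~\ref{lemma:apep_w_closed} that $\ape{B_X}=B_X$. The proof is correct and matches the paper's approach.
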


\begin{example}\label{exam:lpinfidim}
The above result applies to get that if $X$ is an infinite dimensional $L_p(\mu)$ space, for $1<p<\infty$, then every point of $B_X$ is APEP.

Moreover this shows that, unlike extreme points, APEPs of $B_X$ do not have to belong to $S_X$. In particular, in $B_X$ there are APEPs which are not extreme.
\end{example}

This applies in particular to LUR norms. Recall that the norm of a Banach space $X$ is \emph{locally uniformly rotund} (LUR) if for all $x,x_n\in X$ satisfying $\lim_n (2\Vert x\Vert^2+2\Vert x_n\Vert^2-\Vert x+x_n\Vert^2)=0$ one has $\lim_n \Vert x_n-x\Vert=0$ (see \cite[Definition 7.9]{checos}). It is not difficult to prove that if the norm of a Banach space $X$ is LUR then every point of $S_X$ is strongly exposed (in particular it is a preserved extreme point). Consequently we get the following result.

\begin{corollary}\label{coro:renorseparable}
If $X$ is an infinite dimensional Banach space whose norm is LUR, then every point of $B_X$ is APEP.

In particular, every infinite dimensional separable Banach space and every infinite dimensional reflexive space admits an equivalent renorming such that every point of the new unit ball is APEP.
\end{corollary}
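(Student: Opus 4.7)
The plan is to deduce the first assertion as an immediate consequence of Proposition~\ref{prop:everypointispreserved}, and to handle the second assertion by invoking classical LUR renorming theorems.

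For the first part, I would argue as follows. Assume the norm of $X$ is LUR. The paragraph just above recalls that every point of $S_X$ is then strongly exposed; in particular every point of $S_X$ is a denting point of $B_X$, hence a preserved extreme point. Thus $\preext{B_X}=S_X$. Since $X$ is infinite dimensional, Proposition~\ref{prop:everypointispreserved} applies and yields $\ape{B_X}=B_X$. (The only tiny verification is that $\preext{B_X}\subseteq S_X$ always, so equality holds.)

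For the second part I would appeal to classical renorming theory. Every infinite dimensional separable Banach space admits an equivalent LUR norm by Kadec's renorming theorem, and every reflexive Banach space admits an equivalent LUR norm by Troyanski's theorem (see e.g.\ \cite[Chapter~8]{checos} for both results). Applying the first part of the corollary to any such renorming gives the conclusion.

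I do not expect a serious obstacle here, since all the work is done by the preceding proposition and by well-known renorming results; the only thing to be careful about is to state that both hypotheses (separable infinite dimensional, and infinite dimensional reflexive) do indeed fall under the scope of some LUR renorming theorem, and that the ``infinite dimensional'' hypothesis is preserved by passing to an equivalent norm so that Proposition~\ref{prop:everypointispreserved} remains applicable.
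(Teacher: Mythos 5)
Your proposal is correct and follows exactly the route the paper intends: the LUR hypothesis gives that every point of $S_X$ is strongly exposed, hence $\preext{B_X}=S_X$, so Proposition~\ref{prop:everypointispreserved} yields $\ape{B_X}=B_X$, and the second statement follows from the classical Kadec and Troyanski LUR renorming theorems. There is nothing genuinely different from the paper's argument, and no gap.
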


Compare the last statement to Corollary \ref{cor:cararnpequireno}.

Next, we give another simple lemma that can be understood as a generalisation of Milman's theorem.

\begin{lemma}\label{lemma:milman_apep}
Let $X$ be a Banach space and $C\subseteq X$ be bounded, closed and convex. If $B\subseteq C$ is such that $\cconv(B)=C$, then $\ape{C}\subseteq\wcl{B}$.
\end{lemma}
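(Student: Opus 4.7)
The approach is essentially a direct adaptation of the classical argument for Milman's theorem, but exploiting the slice-approximation property that defines APEPs rather than the extremality. The key observation is that if $C = \cconv(B)$, then every slice of $C$ must intersect $B$: for any $x^* \in X^*$ one has $\sup x^*(C) = \sup x^*(\cconv(B)) = \sup x^*(B)$, so given $\alpha > 0$ there exists $b \in B$ with $x^*(b) > \sup x^*(C) - \alpha$, i.e., $b \in S(C,x^*,\alpha)$.

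The plan is then a short proof by contradiction. Suppose $x_0 \in \ape{C}$ but $x_0 \notin \wcl{B}$. Then there exists a weak open set $W$ (in $X$, hence also a weak neighborhood of $x_0$ in $C$) containing $x_0$ and disjoint from $B$. Since $x_0$ is an APEP, the definition provides a nonempty slice $S = S(C, x^*, \alpha)$ with $S \subseteq W$. By the observation in the previous paragraph, $S$ contains some point of $B$, contradicting $W \cap B = \emptyset$.

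I do not anticipate any real obstacle here: the argument uses only the definition of APEP and the elementary fact that the supremum of a linear functional over $B$ equals its supremum over $\cconv(B)$. The statement is strictly stronger than classical Milman applied to the extreme points because the APEP definition does not require the slice to contain the point $x_0$ itself, but this actually makes no difference in the argument: all that matters is that the slice lies inside the neighborhood $W$ that we chose to avoid $B$.
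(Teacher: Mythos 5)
Your argument is correct and is essentially the same as the paper's: both rest on the observation that every non-empty slice of $C=\cconv(B)$ must meet $B$ (since suprema of functionals over $B$ and over $\cconv(B)$ coincide), combined with the APEP definition; the paper phrases it directly while you argue by contradiction, which is an immaterial difference.
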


\begin{proof}
Note that every slice $S$ of $C$ contains a point of $B$. Indeed, let $S=\set{x\in C : f(x)>\alpha}$ for some $f\in X^*$, $\alpha\in\R$. If $S\cap B=\varnothing$ then $f(x)\leq\alpha$ for all $x\in B$ and thus for all $x\in\cconv(B)=C$, so $S$ must be empty.

Now let $x$ be an APEP of $C$. Then every weak neighbourhood $W$ of $x$ contains a slice of $C$ and therefore intersects $B$, so we conclude $x\in\wcl{B}$.
\end{proof}

For some of our results, we will need the following, more precise conclusion in the context of Lemma \ref{lemma:milman_apep}.

\begin{lemma}\label{lemma:apepsmallslices} Let $X$ be a Banach space, $B\subseteq X$ a bounded set, and $C=\cconv(B)$. Let $x\in\ape{C}$. Then either
\begin{itemize}
\item[1)] there is $\varepsilon>0$ and a net $(y_\alpha)\subseteq B$ such that $y_\alpha\stackrel{w}{\to} x$ and $\norm{y_\alpha-x}\geq \varepsilon$ (i.e. $x\in \wcl{B\setminus B(x,\varepsilon)}$), or
\item[2)] for every weakly open set $W$ containing $x$ and $\varepsilon>0$ there exists a slice $S$ of $C$ such that $S\subseteq W$ and $\diam(S)<\varepsilon$.
\end{itemize}
\end{lemma}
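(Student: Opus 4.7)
The plan is to prove the dichotomy by assuming that (1) fails and deriving (2). The failure of (1) means that for every $\varepsilon>0$ one has $x\notin \wcl{B\setminus B(x,\varepsilon)}$; equivalently, there exists a weak neighbourhood $V_\varepsilon$ of $x$ with $V_\varepsilon\cap B\subseteq B(x,\varepsilon)$.

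Fix a weakly open set $W\ni x$ and $\varepsilon>0$. Intersecting $W$ with $V_{\varepsilon/4}$ yields a weak neighbourhood $U\subseteq W$ of $x$ satisfying $U\cap B\subseteq B(x,\varepsilon/4)$. Since $x\in\ape{C}$, I find a non-empty slice $S_0=\{y\in C:f(y)>\alpha\}\subseteq U$. Let $M=\sup f(C)$, which coincides with $\sup f(B)$ because $C=\cconv(B)$. The candidates for small-diameter slices are the subslices $S_\eta:=S(C,f,\eta)=\{y\in C:f(y)>M-\eta\}$; these are non-empty by definition of $M$ and, whenever $\eta<M-\alpha$, they are contained in $S_0\subseteq W$.

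The core of the proof is a Bourgain-type averaging estimate establishing $\diam(S_\eta)\to 0$ as $\eta\to 0^+$. Fix $y\in S_\eta$ and approximate $y$ in norm by convex combinations $y_n=\sum_i\lambda_{n,i}b_{n,i}$ of elements of $B$. For $n$ large enough $f(y_n)>M-\eta$, so $\sum_i\lambda_{n,i}(M-f(b_{n,i}))<\eta$, which forces the cumulative weight $p_n$ of the indices with $f(b_{n,i})\leq M-\sqrt{\eta}$ to satisfy $p_n\leq\sqrt{\eta}$. Provided $\sqrt{\eta}<M-\alpha$, the remaining indices correspond to points $b_{n,i}\in S_0\cap B\subseteq B(x,\varepsilon/4)$. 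Decomposing $y_n$ as a convex combination of a \emph{close} average (belonging to $B(x,\varepsilon/4)$ by convexity of the ball) and a \emph{far} remainder whose norm-difference with the close average is bounded by $\diam(C)$, the triangle inequality gives $\|y_n-x\|\leq \varepsilon/4+\sqrt{\eta}\,\diam(C)$. Letting $n\to\infty$, $\|y-x\|\leq \varepsilon/4+\sqrt{\eta}\,\diam(C)<\varepsilon/2$ once $\eta$ is chosen small enough, and since $y$ was arbitrary in $S_\eta$ this yields $\diam(S_\eta)<\varepsilon$.

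The main obstacle I anticipate lies in this averaging step: points of $C=\cconv(B)$ are only norm limits of finite convex combinations of elements of $B$, so the weight decomposition cannot be applied directly to $y$ itself and must be carried out on the approximants $y_n$, with all quantitative estimates kept uniform in $n$ before passing to the limit. Once this is handled, conclusion (2) follows at once from the arbitrariness of $W$ and $\varepsilon$.
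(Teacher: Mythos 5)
Your argument is correct, and it is organized differently from the paper's. You prove the contrapositive $\neg(1)\Rightarrow(2)$: from the failure of $(1)$ you extract, for each $\varepsilon$, a weak neighbourhood $V_{\varepsilon/4}$ with $V_{\varepsilon/4}\cap B\subseteq B(x,\varepsilon/4)$, use the APEP property to put a slice $S_0=\set{y\in C: f(y)>\alpha}$ inside $W\cap V_{\varepsilon/4}$, and then show by a Markov-type weight estimate on approximating convex combinations that the shallower slices $S(C,f,\eta)$ (same functional, smaller $\eta$) have small diameter: the weight of the ``bad'' atoms with $f\leq M-\sqrt{\eta}$ is at most $\sqrt{\eta}$, the ``good'' atoms lie in $S_0\cap B\subseteq B(x,\varepsilon/4)$, and the resulting bound $\norm{y-x}\leq\varepsilon/4+\sqrt{\eta}\,\diam(C)$ is uniform in the approximants, so it survives the passage to the limit. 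The paper instead proves $\neg(2)\Rightarrow(1)$ and outsources exactly this quantitative step to the cited inequality $\diam(S(C,f,\delta\alpha))\leq 2\diam(S(C,f,\alpha)\cap B)+4\delta$ from \cite[Lemma~5.2.14]{lctesis}, which immediately produces points of $B$ inside every weak neighbourhood of $x$ at distance at least $\varepsilon_0/8$ from $x$, yielding the net in $(1)$. So the two proofs hinge on the same Bourgain-type phenomenon (slices of $\cconv(B)$ are dominated by points of $B$ high on the slicing functional); yours is self-contained and shows in addition that in case $(2)$ the small slices may be taken of the form $S(C,f,\eta)$ for the very functional furnished by the APEP property, while the paper's is shorter because the averaging estimate is quoted rather than reproved. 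Only cosmetic tightening is needed in your write-up: take $\eta<\min\set{1,(M-\alpha)^2}$ so that the far weight $p_n<\sqrt{\eta}<1$ (and treat $p_n=0$ trivially), and choose $\eta$ so that the uniform bound gives, say, $\norm{y-x}\leq 3\varepsilon/8$ for all $y\in S(C,f,\eta)$, whence $\diam S(C,f,\eta)\leq 3\varepsilon/4<\varepsilon$ strictly.
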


\begin{proof}
Assume that 2) does not hold. Then we can find a weakly open set $W_0$  containing $x$ and $\varepsilon_0>0$ so that every slice $S$ of $C$ contained in $W_0$ satisfies $\diam(S)\geq \varepsilon_0$.

Now, let $W$ be a weakly open set containing $x$. Since $x$ is an APEP, we can find a slice $S=S(C, f,\alpha)$ with $S\subseteq W\cap W_0$. Take $0<\delta<1$. By \cite[Lemma~5.2.14]{lctesis} we have 
$$\varepsilon_0\leq \diam(S(C,f,\delta\alpha))\leq 2 \diam(S(C,f,\alpha)\cap B)+4\delta .$$
If we choose $\delta$ small enough to guarantee that $\frac{\varepsilon_0-4\delta}{2}>\frac{\varepsilon_0}{4}$, then it follows that there are $y,z\in S(C,f,\alpha)\cap B$ with $\norm{y-z}\geq \frac{\varepsilon_0}{4}$. Then either $\norm{y-x}\geq \frac{\varepsilon_0}{8}$ or $\norm{z-x}\geq \frac{\varepsilon_0}{8}$. 

In any case, we have proved that there is $y_W\in W\cap B$ with $\norm{y_W-x}\geq \frac{\varepsilon_0}{8}$. It is clear that $(y_W)$ is a net in $B$ that converges weakly to $x$. 
\end{proof}

\begin{remark}\label{remark:puntualizademoluisca}
Observe that \cite[Lemma~5.2.14]{lctesis} which is established for unit balls. An inspection in the proof, however, reveals that this result can be extended to general convex sets with the same proof.
\end{remark}

The following example shows that condition 2) in Lemma \ref{lemma:apepsmallslices} can really fail to hold.

\begin{example}
In \cite{ahntt16}, a Banach space $X$ isomorphic to $C([0,1])$ is constructed such that every non-empty relatively weakly open subset of $B_X$ has diameter $2$ and such that every point of $S_X$ is a preserved extreme point of $B_X$. In particular, every point of $B_X$ is APEP (Proposition~\ref{prop:everypointispreserved}).
\end{example}

We now provide a characterisation of APEPs of a given closed, bounded and convex set in terms of the extremal structure of its weak$^*$ closure.

\begin{theorem}\label{theo:caraAPEP}
Let $X$ be a Banach space, $C\subseteq X$ be closed convex and bounded and $x\in C$. Let $D:=\overline{C}^{w^*}$ be its weak$^*$ closure in $X^{**}$. The following are equivalent: 
\begin{enumerate}
\item $x\in\ape{C}$.
\item For every weak$^*$ open subset $W$ of $D$ containing $x$ there exists a weak$^*$ slice $S$ of $D$ such that $S\subseteq W$.
\item $x\in \overline{\ext{D}}^{w^*}$.
\end{enumerate}
\end{theorem}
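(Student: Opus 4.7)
The plan is to prove the cycle $(3) \Rightarrow (2) \Rightarrow (1) \Rightarrow (3)$. Setting $D = \overline{C}^{w^*}$, note that $D$ is weak$^*$ compact and convex by Banach--Alaoglu, and recall that the weak topology of $X$ coincides with the restriction of the weak$^*$ topology of $X^{**}$ to $X$.

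The equivalence $(2) \Leftrightarrow (3)$ is essentially a Krein--Milman--Choquet argument for $D$. For $(2) \Rightarrow (3)$, any non-empty weak$^*$ slice $\{y \in D : h(y) > \gamma\}$ of $D$ contains the weak$^*$ compact convex face $\{y \in D : h(y) = \sup_D h\}$, whose extreme points (which exist by Krein--Milman) are also extreme in $D$; so (2) forces every weak$^*$ neighborhood of $x$ in $D$ to meet $\ext{D}$. For $(3) \Rightarrow (2)$, a weak$^*$ open neighborhood of $x$ in $D$ must contain some $e \in \ext{D}$, and Choquet's lemma applied to $D$ yields a weak$^*$ slice of $D$ containing $e$ and lying inside the neighborhood.

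For $(2) \Rightarrow (1)$, given a weakly open $W \subseteq C$ with $x \in W$, I would write $W = V \cap C$ with $V$ weak$^*$ open in $X^{**}$, apply (2) to the weak$^*$ open set $V \cap D$ of $D$ to obtain a non-empty weak$^*$ slice $S = \{y \in D : h(y) > \gamma\}$ of $D$ with $S \subseteq V \cap D$, and observe that $\sup_D h = \sup_C h$ by weak$^*$ density of $C$ in $D$. Then $S \cap C = \{y \in C : h(y) > \gamma\}$ is a non-empty slice of $C$ contained in $W$.

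The main obstacle is $(1) \Rightarrow (3)$: a slice of $C$ sitting inside a weak neighborhood of $x$ need not extend to a weak$^*$ slice of $D$ sitting inside the corresponding weak$^*$ neighborhood in $X^{**}$, since the extended weak$^*$ slice may pick up points of $D \setminus C$ outside the neighborhood. The fix is to shrink the neighborhood and then exploit weak$^*$ density. Given a basic weak$^*$ open neighborhood $V = \{y \in X^{**} : |(y-x)(f_i)| < \varepsilon,\ i=1,\dots,n\}$ of $x$, set $V' = \{y : |(y-x)(f_i)| < \varepsilon/2\}$ so that $\overline{V'}^{w^*} \subseteq V$. By (1) applied to the weakly open set $V' \cap C$ of $C$, pick a non-empty slice $S_0 = \{y \in C : g(y) > \alpha\}$ with $S_0 \subseteq V' \cap C$. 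The crucial observation is that the weak$^*$ slice $S = \{y \in D : g(y) > \alpha\}$ of $D$ satisfies $S \subseteq \overline{S_0}^{w^*}$, because any $y \in S$ is a weak$^*$ limit of a net in $C$ on which weak$^*$ continuity of $g$ forces the values to eventually exceed $\alpha$; hence $S \subseteq \overline{V'}^{w^*} \subseteq V$. The face $F = \{y \in D : g(y) = \sup_D g\}$ is non-empty by weak$^*$ compactness of $D$, contained in $S$, and its extreme points (which exist by Krein--Milman) are extreme in $D$ because $F$ is a face of $D$. Any such extreme point witnesses $V \cap \ext{D} \neq \varnothing$, giving $x \in \overline{\ext{D}}^{w^*}$.
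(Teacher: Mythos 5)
Your proposal is correct and uses essentially the same ingredients as the paper's proof—the shrunken weak$^*$ neighbourhood with $\overline{V'}^{w^*}\subseteq V$ combined with the weak$^*$-density net argument to push a slice of $C$ up to a weak$^*$ slice of $D$, Choquet's lemma to localise extreme points of $D$ inside weak$^*$ neighbourhoods, and a Krein--Milman/face argument to find extreme points in weak$^*$ slices—only traversing the cycle in the opposite direction, $(3)\Rightarrow(2)\Rightarrow(1)\Rightarrow(3)$ instead of the paper's $(1)\Rightarrow(2)\Rightarrow(3)\Rightarrow(1)$. No substantive difference in method, and all steps check out.
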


\begin{proof}
(1)$\Rightarrow$(2). Let $W$ be a weak$^*$ open subset of $D$ such that $x\in W$. Choose a weak$^*$ open subset $V$ of $D$ such that $x\in V\subseteq \overline{V}^{w^*}\subseteq W$. Consider $U:=V\cap C$, which is a weakly open set of $C$ with $x\in U$. Since $x$ is APEP we can find a slice $S=S(C,f,\alpha)$ such that $S\subseteq U$. We claim that
$$S(D,f,\alpha)\subseteq \overline{V}^{w^*}.$$
Indeed, given any $z^{**}\in S(D,f,\alpha)\subseteq D$ we can find a net $z_s\wsconv z^{**}$ such that $z_s\in C$ holds for every $s$. Since $f(z_s)\rightarrow f(z^{**})>\sup_D{f} -\alpha=\sup_C{f}-\alpha$ we can find an index $s_0$ such that $f(z_s)>\sup_C{f}-\alpha$ holds for every $s\geqn s_0$. Since $z_s\in C$ we infer that $z_s\in S(C,f,\alpha)\subseteq U\subseteq V$ holds for every $s\geqn s_0$. Since $z_s\wsconv z^{**}$ we infer that $z^{**}\in \overline{V}^{w^*}$, as desired. Since $\overline{V}^{w^*}\subseteq W$ we get that $S(D,f,\alpha)\subseteq W$, as required.

(2)$\Rightarrow$(3). Select a relatively weak$^*$ open subset $W$ of $D$ containing $x$ and let us find an extreme point $e^{**}$ of $D$ such that $e^{**}\in W$. By (2) we can find a weak$^*$ slice $S$ of $D$ such that $S\subseteq W$. Now the Krein-Milman theorem ensures that $D=\cconv^{w^*}(\ext{D})$ (observe that $D$ is weak$^*$ compact since $C$ is bounded). Since $S$ is a weak$^*$ slice of $D$ we get that $\emptyset\neq S\cap \ext{D}\subseteq W\cap \ext{D}$. We have proved that every weak$^*$ neighbourhood of $x$ contains an extreme point of $D$, so the implication is proved.

(3)$\Rightarrow$(1). Let $W$ be a weakly open subset of $C$ containing $x$, and let us prove that there exists a slice $S$ of $C$ with $S\subseteq W$. 
In order to do so define $\tilde W$ as the weak$^*$ open subset of $D$ defined by $W$, that is, such that $\tilde W\cap C=W$. By (3) there exists an extreme point $e^{**}$ of $D$ such that $e^{**}\in \tilde W$. Since $e^{**}$ is an extreme point of $D$, Choquet's lemma implies that there exists a slice $S=S(D,f,\alpha)$ such that $e^{**}\in S\subseteq \tilde W$.
It is now clear that $S(C,f,\alpha)\subseteq \tilde W\cap C=W$, and the proof is finished.
\end{proof}

With Theorem~\ref{theo:caraAPEP} we can now provide an example of an APEP which is not an extreme point in finite-dimensional Banach spaces.

\begin{example}\label{exam:findinapepnoextre}
Let $C\subseteq \mathbb R^3$ be a compact set whose set of extreme points is not closed (c.f. e.g. \cite[Exercise 3.144]{checos}), and set $x_0\in \overline{\ext{C}}\setminus \ext{C}=\overline{\preext{C}}\setminus\preext{C}$ (since in the finite dimensional framework clearly every extreme point is preserved). Then $x_0$ is an APEP of $C$ which is not an extreme point of $C$. 
\end{example}

An example of an extreme point which is not an APEP will be obtained in Example~\ref{exam:extremenoapepfree}. Moreover, an example of an extreme point which is an APEP but fails to be a preserved extreme point will be exhibited in Example~\ref{example:extreapepnopreser}.

\begin{remark}\label{remark:dim2apep}
It is a well-known result that if $X$ is a 2-dimensional Banach space and $C\subseteq X$ is closed, convex, and bounded then the set $\ext{C}$ is closed (c.f. e.g. \cite[Exercise 3.144]{checos}). Consequently, in such a situation every extreme point of $C$ is an APEP.
\end{remark}

Let us provide another example, which shows that if a closed, convex, and bounded set has the property that every point is APEP then this does not necessarily imply that every point is an extreme point, even if the set of extreme points is norm-dense.

\begin{example}\label{example:denseapepnoextreme}
In \cite{poulsen} a compact convex set $K\subseteq \ell_2$ is constructed with the property that the set of all extreme points is dense in $K$. In such a compact set, all points are APEP but there are (densely many) points which are not extreme points.
\end{example}

\medskip
\section{APEP in classical Banach spaces}\label{section:classicalspaces}

In this section, we pursue a characterisation of the APEPs of the unit ball of classical Banach spaces (namely $C(K)$ spaces and $L_p(\mu)$ for $1\leq p\leq \infty$). We will also study APEPs in absolute sums of Banach spaces.

It is immediate that if $X=\ell_p^n$ for $n\in\mathbb N$ and $1\leq p\leq \infty$ then the set of (preserved) extreme points of $B_X$ is closed. Consequently, the APEPs of $B_X$ coincide with the extreme points.
On the other hand, if $1<p<\infty$ and $X=L_p(\mu)$ is infinite-dimensional, Example \ref{exam:lpinfidim} shows that all points of $B_X$ are APEPs. The rest of the section will focus on the remaining cases.

\subsection{\texorpdfstring{$C(K)$}{C(K)} spaces}

Let $K$ be a compact Hausdorff topological space. In this subsection, we aim to provide a description of those points of $B_{C(K)}$ which are APEPs. The main result is the following.

\begin{theorem}\label{theo:C(K)charapep}
Let $K$ be a compact Hausdorff topological space. Let $f\in B_{C(K)}$. The following are equivalent:
\begin{enumerate}
    \item $f$ is an APEP of $B_{C(K)}$.
    \item $\vert f(t)\vert=1$ holds for every $t\in K$.
    \item $f$ is an extreme point of $B_{C(K)}$.
    \item $f$ is a preserved extreme point of $B_{C(K)}$.
\end{enumerate}
\end{theorem}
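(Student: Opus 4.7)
The plan is to establish the chain $(4) \Rightarrow (1)$ and $(2) \iff (3) \Rightarrow (4)$, with the heart of the matter being $(1) \Rightarrow (2)$. The implication $(4) \Rightarrow (1)$ is immediate from the definitions, while $(2) \iff (3)$ is the classical description of extreme points of the unit ball of $C(K)$ (a function is extreme precisely when it has modulus one everywhere). For $(2) \Rightarrow (4)$, I would invoke the fact that $C(K)^{**}$ is isometric to a $C(Q)$-space for some compact Hausdorff $Q$ (via Gelfand--Naimark applied to the commutative $C^*$-algebra $C(K)^{**}$ endowed with its Arens product, or equivalently via Kakutani's representation of AM-spaces with unit), with the canonical inclusion $C(K) \hookrightarrow C(K)^{**}$ a unital $*$-homomorphism. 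The relation $f \bar f = 1$ holding in $C(K)$ is then inherited by $C(K)^{**} \cong C(Q)$, so $f$ is a unitary of $C(Q)$ and therefore extreme in $B_{C(K)^{**}}$.

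The crucial implication is $(1) \Rightarrow (2)$, which I would prove by contraposition. Assuming $|f(t_0)| < 1$ at some $t_0 \in K$, set $\eta := 1 - |f(t_0)| > 0$ and consider the weakly open neighbourhood
$$
W := \set{g \in B_{C(K)} : |g(t_0) - f(t_0)| < \eta/2} \ni f .
$$
The aim is to show that no non-empty slice of $B_{C(K)}$ is contained in $W$, which by the very definition of APEP forces $f \notin \ape{B_{C(K)}}$. Given any non-empty slice $S(B_{C(K)}, \mu, \alpha)$, after normalising $\|\mu\| = 1$, decompose $\mu = c\delta_{t_0} + \mu'$ with $c := \mu(\set{t_0})$ and $\mu'(\set{t_0}) = 0$. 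The analysis then splits according to the size of $|c|$ relative to $\alpha$.

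In the atomic regime where $|c|$ is large compared to $\alpha$ (say $|c| \geq 2\alpha/\eta$), the bound $\abs{\int g \, d\mu'} \leq 1 - |c|$ forces every $g \in S$ to satisfy $\sign(c) \cdot g(t_0) > 1 - \alpha/|c|$, so $|g(t_0)|$ is pushed above $|f(t_0)| + \eta/2$ and $g$ escapes $W$. In the complementary non-atomic regime, one proceeds constructively: pick $g_0$ in a smaller slice $S(B_{C(K)}, \mu, \alpha/2)$ to gain slack, use outer regularity of $\mu$ to choose an open $V \ni t_0$ with $|\mu|(V \setminus \set{t_0})$ arbitrarily small, select a Urysohn function $\phi$ with $\phi(t_0) = 1$ and $\supp(\phi) \subseteq V$, and define $g_1 := (1 - \phi) g_0 + \phi s$ where $s \in \set{-1, +1}$ is chosen so that $cs = |c|$ (taking $s = 1$ if $c = 0$). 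One checks that $g_1 \in B_{C(K)}$, that $g_1 \in S$ provided $V$ is sufficiently small, and that $g_1(t_0) = s$, whence $|g_1(t_0) - f(t_0)| \geq \eta > \eta/2$ and $g_1 \notin W$.

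The main obstacle is the quantitative bookkeeping in this last step: the threshold separating the atomic and non-atomic regimes must be chosen so that, in the atomic regime, mere membership in $S$ already forces $|g(t_0)|$ close to $1$, while in the non-atomic regime there is enough slack in $\alpha$ to absorb the perturbation introduced by the Urysohn bump and still keep $g_1$ inside the slice. The existence argument in the non-atomic case is essentially the same trick that shows extreme points of $B_{C(K)}$ must have modulus one pointwise.
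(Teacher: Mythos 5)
Your proposal is correct and, for the crucial implication (1)$\Rightarrow$(2), follows essentially the same route as the paper: perturb a near-optimal element of the slice by a Urysohn bump supported on a small neighbourhood of $t_0$ (chosen via regularity of the non-atomic part of $\mu$) so that the perturbed function takes the value $\pm 1$ at $t_0$ while remaining in the slice, hence escapes the chosen weak neighbourhood. The only differences are inessential: your atomic/non-atomic case split is superfluous, since the constructive argument works for every value of $\mu(\{t_0\})$ --- which is exactly the paper's unified construction showing that every non-empty slice contains a function of modulus $1$ at $t_0$ --- and you justify (2)$\Rightarrow$(4) directly via $C(K)^{**}\cong C(Q)$ where the paper simply cites the known fact that extreme points of $B_{C(K)}$ are preserved.
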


\begin{proof} (2)$\Rightarrow$(3) is straightforward, whereas (3)$\Rightarrow$(4) is well known (c.f. e.g. \cite[p. 295]{phelps}). Moreover, (4)$\Rightarrow$(1) is general. It remains to show (1)$\Rightarrow$(2). To this end, it will suffice to prove that for any $t_0\in K$ and any slice $S$ of $B_{C(K)}$ there exists some function $\varphi\in S$ with $\vert \varphi(t_0)\vert=1$. Indeed, it follows that the weakly open subset 
$$W:=\{g\in B_{C(K)}: \vert \delta_{t_0}(g)\vert<1\}$$
cannot contain any slice of $B_{C(K)}$ and therefore cannot contain any APEP either.

In order to do so, fix $t_0\in K$ and a slice $S$ of $B_{C(K)}$. We may assume that $S=S(B_{C(K)},\mu,\alpha)$ where $\mu\in C(K)^*=M(K)$ is a regular Borel measure with $\norm{\mu}=1$. Observe that we can write $\mu=\lambda \delta_{t_0}+\nu$, where $\lambda\in\R$ and $\nu\in M(K)$ is such that $\nu(\{t_0\})=0$. Now select $h\in B_{C(K)}$ such that $\mu(h)>1-\frac{\alpha}{8}$. Since $\nu$ is a regular measure and $\nu(\{t_0\})=0$, there exists an open subset $U\subseteq K$ with $t_0\in U$ such that $\vert \nu\vert(U)\leq\frac{\alpha}{8}$. Consider a Urysohn function $g\in S_{C(K)}$ such that $g(t)=0$ if $t\notin U$, $0\leq g\leq 1$ and $g(t)=1$ on $\overline V$ for some open set $V$ such that $t_0\in V\subseteq \overline{V}\subseteq U$. Now take another Urysohn function $j\in S_{C(K)}$ such that $j(t)=0$ if $t\notin V$ and $j(t_0)=1$. 

We consider $\sign(\lambda)=|\lambda|/\lambda$ if $\lambda\neq 0$ and $\sign(0)=1$. Now, define
$$\varphi:=(1-g)h+\sign(\lambda) j.$$
It is clear that $\varphi(t_0)=\sign(\lambda)\in \{-1,1\}$, so we only have to prove that $\varphi\in S$. Let us start by proving that $\Vert \varphi\Vert_{\infty}\leq 1$. Select any $t\in K$. Now we have two possibilities:
\begin{enumerate}
    \item If $t\notin V$ then $j(t)=0$, so $\vert \varphi(t)\vert=\vert 1-g(t)\vert \vert h(t)\vert\leq \vert h(t)\vert\leq 1$.
    \item If $t\in V$ then $g(t)=1$ and thus $\vert \varphi(t)\vert=\vert j(t)\vert\leq 1$.
\end{enumerate}
In any case we get $\vert \varphi(t)\vert\leq 1$. It remains to estimate $\mu(\varphi)$. Observe that
$$\mu(\varphi)=\mu((1-g)h)+\mu(\sign(\lambda)j).$$
On the one hand, since $\delta_{t_0}(h(1-g))=0$ we get that $\mu((1-g)h)=\nu((1-g)h)$. Now
\[
\begin{split}
\nu((1-g)h)=\int_K (1-g)h\ d\nu& =\int_{K\setminus U}(1-g)h d\nu+\int_U (1-g)h\ d\nu\\
& \geq \int_{K\setminus U} h\ d\nu-\Vert (1-g)h\Vert_\infty \vert \nu\vert(U)\\
& \geq \int_K h\ d\nu-\int_U h\ d\nu-\vert \nu\vert(U)\\
& \geq \nu(h)-2\vert \nu\vert(U)\geq \nu(h)-\frac{\alpha}{4}.
\end{split}
\]
On the other hand,
\[
\begin{split}
\mu(\sign(\lambda)j)& =\sign(\lambda)\lambda j(t_0)+\nu(\sign(\lambda)j)\\
& = \vert \lambda\vert+\sign(\lambda)\int_K j\ d\nu \\
& = \vert \lambda\vert+\sign(\lambda)\int_V j\ d\nu\\
& \geq \vert \lambda\vert-\vert \nu\vert(V)\geq \vert \lambda\vert-\vert \nu\vert (U)\\
& \geq \vert \lambda\vert-\frac{\alpha}{8}.
\end{split}
\]
Putting everything together we infer
$$\mu(\varphi)\geq \nu(h)-\frac{\alpha}{4}+\vert\lambda\vert-\frac{\alpha}{8}=\vert \lambda\vert+\nu(h)-\frac{3\alpha}{8}.$$
Taking into account that $\vert \lambda\vert\geq \lambda \delta_{t_0}(h)$ we clearly get that
$$\mu(\varphi)\geq (\lambda\delta_{t_0}+\nu)(h)-\frac{3\alpha}{8}=\mu(h)-\frac{3\alpha}{8}>1-\frac{\alpha}{8}-\frac{3\alpha}{8}>1-\alpha.$$
This implies that $\varphi\in S$ and the proof is finished.
\end{proof}

Let us show an immediate consequence of Theorem \ref{theo:C(K)charapep}, for describing APEP in the unit ball of  $L_\infty$ spaces.

\begin{corollary}
    Let $(\Omega, \Sigma, \mu)$ be a measure space and let $I$ be an arbitrary set. Let $x=(x_i)_{i\in I}\in B_{\ell_\infty(I)}$ and $f\in B_{L_\infty(\mu)}$. Then, 
    \begin{itemize}
        \item[a)] $f\in\ape{B_{L_\infty(\mu)}}$ if and only if $|f(\omega)|=1$ for  a.e. $\omega\in \Omega$.
        \item[b)] $x\in\ape{B_{\ell_\infty(I)}}$ if and only if $|x_i|=1$ for all $i\in I$.
    \end{itemize}
\end{corollary}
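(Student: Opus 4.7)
The plan is to deduce both parts from Theorem \ref{theo:C(K)charapep} by identifying the relevant spaces with $C(K)$ spaces, and then translating the condition ``$|\cdot|=1$ everywhere on $K$'' back to the original setting. The key observation that makes the reduction legitimate is that APEPs are invariant under isometric isomorphisms: the definition only uses weakly open sets and slices, both of which are transported by any surjective linear isometry $T$, so $T$ maps $\ape{B_X}$ bijectively onto $\ape{B_Y}$.

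For (b), I would use the canonical isometric isomorphism $\Phi\colon \ell_\infty(I)\to C(\beta I)$, where $\beta I$ is the Stone--\v{C}ech compactification of the discrete space $I$ and $\Phi(x)$ is the unique continuous extension of the map $i\mapsto x_i$. Applying Theorem \ref{theo:C(K)charapep} to $K=\beta I$, $x\in\ape{B_{\ell_\infty(I)}}$ iff $|\Phi(x)(t)|=1$ for every $t\in\beta I$. Since $I$ is dense in $\beta I$ and $\Phi(x)$ is continuous, this is equivalent to $|x_i|=1$ for every $i\in I$, which finishes (b).

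For (a), I would invoke the Gelfand representation: $L_\infty(\mu)$ is a commutative unital $C^*$-algebra, hence isometrically $^*$-isomorphic to $C(K_\mu)$ for some compact Hausdorff $K_\mu$. Under such a $^*$-isomorphism $\Psi$, the functional calculus yields $|\Psi(f)|=\Psi(|f|)$ in $C(K_\mu)$, and therefore $|\Psi(f)(t)|=1$ for all $t\in K_\mu$ if and only if $|f(\omega)|=1$ for $\mu$-almost every $\omega\in\Omega$. (Equivalently, the set of $f\in B_{L_\infty(\mu)}$ with $|f|=1$ a.e.\ is well known to be exactly $\ext{B_{L_\infty(\mu)}}$, and $\Psi$ transports it bijectively onto $\ext{B_{C(K_\mu)}}$.) Applying Theorem \ref{theo:C(K)charapep} on $C(K_\mu)$ and pulling back via $\Psi$ gives (a).

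The only genuinely non-mechanical step is the identity $|\Psi(f)|=\Psi(|f|)$, i.e.\ the correspondence between modulus-one-a.e.\ in $L_\infty(\mu)$ and modulus-one-everywhere in $C(K_\mu)$; this is standard Gelfand theory for commutative $C^*$-algebras (or, alternatively, follows from the classical description of $\ext{B_{L_\infty(\mu)}}$ together with the fact that isometric isomorphisms preserve extreme points). Everything else is immediate from Theorem \ref{theo:C(K)charapep} and the invariance of APEPs under isometric isomorphism.
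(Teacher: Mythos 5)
Your proposal is correct and follows essentially the same route as the paper: identify $L_\infty(\mu)$ with $C(K_\mu)$ via the Gelfand (maximal ideal space) representation and $\ell_\infty(I)$ with $C(\beta I)$, then apply Theorem~\ref{theo:C(K)charapep} and translate the modulus-one condition back using the classical description of extreme points of $B_{L_\infty(\mu)}$ (equivalently, your $|\Psi(f)|=\Psi(|f|)$ observation). Your explicit remark that APEPs are preserved under surjective linear isometries is exactly the (implicit) justification the paper relies on, so nothing is missing.
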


\begin{proof}
    a) We have $L_\infty (\mu)=C(K_\mu)$ isometrically, where $K_\mu$ is the maximal ideal space of $L_\infty(\mu)$ (see e.g. \cite[Theorem 9.6]{Zhu}). The result follows from Theorem \ref{theo:C(K)charapep} and the well-known characterization of extreme points of $B_{L_\infty(\mu)}$.
    
    b) is a particular case of a) taking $\mu$ as the counting measure on $I$, but can also be justified directly: we have $\ell_\infty(I)=C(\beta I)$ isometrically, where $\beta I$ is the Stone-Čech compactification of $I$, and the result follows again from Theorem \ref{theo:C(K)charapep} and the well-known characterization of extreme points of $B_{\ell_\infty(I)}$.
\end{proof}

\subsection{\texorpdfstring{$L_1(\mu)$}{L1} spaces}

In this section we aim to characterise when $f\in B_{L_1(\mu)}$ is an APEP, for a given measure space $(\Omega, \Sigma,\mu)$. We limit our analysis to localisable measure spaces, which are precisely those for which $L_1(\mu)^*$ is isometrically isomorphic to $L_{\infty}(\mu)$ \cite[Theorem 243G]{fremlin}. This is no loss of generality, as it is known that every $L_1(\mu)$ space is isometrically isomorphic to an $\ell_1$-sum of spaces of the form $L_1(\mu_i)$ where $\mu_i$ is a finite, hence localisable, measure (c.f. e.g. \cite[p. 501]{deflo}). We will deal with APEPs in $\ell_1$-sums of Banach spaces in Section \ref{subsection:absolutesums}.

Before we proceed, let us introduce a bit of notation. Recall that a measurable set $A\subseteq \Omega$ is called an \emph{atom} of $\mu$ if $\mu(A)>0$ and if $\mu(B)=0$ for every measurable subset $B\subseteq A$ such that $\mu(B)<\mu(A)$. As a consequence of \cite[Theorem~2.1]{johnson} we can decompose $L_1(\mu)$ as 
\begin{equation}\label{eq:Maharam}
    L_1(\mu)=L_1(\nu)\oplus_1 \ell_1(I),
\end{equation} 
where $\nu$ is the continuous part of $\mu$, and $I$ is the set of all atoms of $\mu$ (up to a measure 0 set).

With the above description in mind, we will first analyse the APEPs of the unit ball of $L_1(\mu)$  in the case that $\mu$ either fails to have any atom or in the case that $\mu$ is purely atomic, and then we will complete the information with the stability results of the APEPs in $\ell_1$-sums of spaces of the next section (see Proposition~\ref{prop:APEPlusumas}). In order to do so, let us start with the atomless case.

\begin{proposition}\label{prop:L1}
Let $(\Omega,\Sigma,\mu)$ be a localisable measure space such that $\mu$ is atomless. Then $\ape{B_{L_1(\mu)}}$ is empty.
\end{proposition}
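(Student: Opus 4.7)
The plan is to apply Theorem~\ref{theo:caraAPEP}, which (together with Goldstine's theorem to identify $\overline{B_{L_1(\mu)}}^{w^*}=B_{L_1(\mu)^{**}}$) reduces the claim to showing that no point of $L_1(\mu)$ lies in the weak$^*$ closure, inside $L_1(\mu)^{**}$, of $\ext{B_{L_1(\mu)^{**}}}$.

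The first step is a concrete identification of the bidual. Since $\mu$ is localisable, $L_1(\mu)^{*}=L_\infty(\mu)$, and $L_\infty(\mu)$ is isometric to $C(K_\mu)$, where $K_\mu$ is the Stone (maximal ideal) space of $L_\infty(\mu)$; dualising once more yields $L_1(\mu)^{**}\equiv M(K_\mu)$. A classical fact is that $\ext{B_{M(K_\mu)}}=\{\pm\delta_t:t\in K_\mu\}$, and compactness of $K_\mu$ combined with the $w^*$-continuity and injectivity of $t\mapsto\delta_t$ makes this map a $w^*$-homeomorphism onto its image, so $\{\pm\delta_t:t\in K_\mu\}$ is already $w^*$-closed in $M(K_\mu)$. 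Thus, by Theorem~\ref{theo:caraAPEP}, if $f\in B_{L_1(\mu)}$ were APEP, its canonical image in $M(K_\mu)$ would have to coincide with $\pm\delta_t$ for some $t\in K_\mu$.

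The remaining and main step is to rule this out using atomlessness. Suppose, for contradiction, that $f\in L_1(\mu)$ embeds as $\delta_t$ (the case $-\delta_t$ is symmetric). Applying the identity $\int_{K_\mu} g\,d\delta_t = \int_\Omega fg\,d\mu$ to $g=\mathbf{1}_A$ (read as an element of $L_\infty(\mu)\equiv C(K_\mu)$ via the clopen set associated with the class of $A$) gives the dichotomy $\int_A f\,d\mu\in\{0,1\}$ for every measurable $A\subseteq\Omega$. Taking $A=\Omega$ yields $\int f\,d\mu=1=\|f\|_1$, forcing $f\geq 0$ $\mu$-a.e. Since $f\in L_1(\mu)$ has $\sigma$-finite support, we may restrict attention to a finite-measure set $\Omega_0$ where $\int_{\Omega_0} f\,d\mu=1$. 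Inside $\Omega_0$, atomlessness allows a dyadic construction: at stage $n$, halve the current set $A_n$ (with $\int_{A_n}f\,d\mu=1$) into two subsets of equal $\mu$-measure, and the dichotomy forces exactly one of them to carry integral $1$; keep that half as $A_{n+1}$. This produces a decreasing sequence with $\mu(A_n)=\mu(\Omega_0)/2^n\to 0$ while $\int_{A_n}f\,d\mu=1$, contradicting absolute continuity of the integral against $\mu$.

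The main obstacle is this last measure-theoretic step: ensuring that the $\{0,1\}$-dichotomy interacts correctly with the dyadic splitting, which is precisely where atomlessness enters (with atoms, the halving step can fail). Once that is established, the bidual identification does the rest of the work.
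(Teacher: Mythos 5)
Your proof is correct, but it takes a genuinely different route from the paper's. The paper argues directly from the definition of APEP inside $L_1(\mu)$: for each $f\in B_{L_1(\mu)}$ it uses atomlessness to produce an explicit relatively weakly open set $W\ni f$ (obtained by pinning $\int_A f\,d\mu$ into an interval such as $(\alpha,\tfrac12)$, or $\vert\int_\Omega f\,d\mu\vert<\tfrac12$ when $f=0$), and then shows that every non-empty slice $S(B_{L_1(\mu)},h,\alpha)$ contains a normalized indicator $\pm\chi_B/\mu(B)$ supported on a set where $h$ nearly attains its norm, whose pairing with $\chi_A$ lies in $\{-1,0,1\}$ and hence falls outside $W$; so no slice fits inside $W$ and $f$ is not APEP. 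You instead pass to the bidual via Theorem~\ref{theo:caraAPEP} and Goldstine, identify $L_1(\mu)^{**}$ with $M(K_\mu)$ through $L_\infty(\mu)\cong C(K_\mu)$, invoke the classical fact that $\ext{B_{M(K_\mu)}}=\{\pm\delta_t : t\in K_\mu\}$, observe that this set is $w^*$-compact (the map $t\mapsto\pm\delta_t$ is $w^*$-continuous on the compact space $K_\mu$), hence equal to its own $w^*$-closure, and then rule out that an $L_1$-density can act as $\pm\delta_t$ via the $\{0,1\}$-dichotomy for $A\mapsto\int_A f\,d\mu$ combined with dyadic halving, which is exactly where atomlessness (Sierpi\'nski's intermediate-value property for atomless measures) enters, contradicting absolute continuity of the integral. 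The trade-off: the paper's argument is elementary and self-contained, exhibiting concrete slice-free weakly open sets within $L_1(\mu)$ itself; yours is shorter modulo classical structure theory (Gelfand and Riesz representations, extreme points of $M(K)$) and yields the slightly stronger conclusion that $\wscl{\ext{B_{L_1(\mu)^{**}}}}$ consists precisely of the Dirac functionals on $K_\mu$ and is disjoint from $L_1(\mu)$, from which the statement follows at once through Theorem~\ref{theo:caraAPEP}. The only minor points worth spelling out in a final write-up are the closedness argument just mentioned and the selection of the finite-measure set $\Omega_0$ with $\int_{\Omega_0}f\,d\mu=1$, which follows from $\sigma$-finiteness of the support of $f$ together with the dichotomy.
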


\begin{proof}
In this proof we will denote $L_1=L_1(\mu)$ and $L_\infty=L_\infty(\mu)=L_1(\mu)^*$. Let $f\in B_{L_1}$ and let us prove that $f$ is not an APEP. In order to do so, let us begin with the case $f\neq 0$. Since $f=f^+-f^-$, we may assume without loss of generality that $f^+\neq 0$. Since $\int_\Omega f^+ d\mu\neq 0$ and $\mu$ is atomless we can find a subset $A\subseteq \Omega$ and $\alpha>0$ such that
$$\alpha<\int_A f^+\ d\mu=\int_A f\ d\mu<\frac{1}{2}.$$
Define $g:=\chi_{A}\in S_{L_\infty}$ and set
$$W:=\left\{ \varphi\in B_{L_1}: \alpha<g(\varphi)=\int_A \varphi\ d\mu<\frac{1}{2}\right\}.$$
Observe that $f\in W$. Indeed, 
$$g(f)=\int_\Omega f\chi_A\ d\mu=\int_A f\ d\mu=\int _A f^+\ d\mu\in \left(\alpha,\frac{1}{2}\right).$$
Let us now prove that the relatively weakly open set $W$ cannot contain any slice of $B_{L_1}$.

Indeed, take a slice $S=S(B_{L_1}, h,\beta)$ for $\beta>0$ and $h\in S_{L_\infty}$. 
By the very definition of essential supremum, there exists $\xi\in \{-1,1\}$ and $B\in \Sigma$ such that $0<\mu(B)<\infty$ and 
$$B\subseteq\{t\in \Omega: \xi h(t)>1-\beta\}.$$
Now we have two different possibilities:
\begin{itemize}
    \item[a)] If $\mu(A\cap B)\neq 0$ then define the function $\varphi:=\xi \frac{\chi_{A\cap B}}{\mu(A\cap B)}$. On the one hand we have
    \[
    \begin{split}
    h(\varphi)=\int_\Omega h \varphi\ d\mu=\frac{1}{\mu(A\cap B)}\int_{A\cap B} \xi h(t)\ d\mu(t)>1-\beta
    \end{split}
    \]
    since $\xi h(t)=\vert h(t)\vert$ on $A\cap B$. This implies $\varphi\in S$. On the other hand,
    \[
    \begin{split}
    g(\varphi)=\int_\Omega g\varphi\ d\mu=\frac{\xi}{\mu(A\cap B)}\int_{A\cap B}g\ d\mu=\xi
    \end{split}
    \]
since $g=1$ on $A\cap B$. In particular $g(\varphi)$ is either $1$ or $-1$, so $\varphi\notin W$. 
\item[b)] If $\mu(A\cap B)=0$ then define $\varphi:=\xi \frac{\chi_B}{\mu(B)}$. As before, $h(\varphi)>1-\beta$ (i.e. $\varphi\in S$), but clearly $g(\varphi)=\int_A \xi\frac{\chi_B}{\mu(B)}\ d\mu=0$ since $\mu(A\cap B)=0$. This proves that $\varphi\notin W$, as desired.
\end{itemize}

To finish the proof, it remains to be proved that $0$ is also not an APEP of $B_{L_1}$. In order to do so, define
$$W:=\left\{\varphi\in B_{L_1}: \left\vert \int_\Omega \varphi\ d\mu\right\vert<\frac{1}{2} \right\}.$$
It is immediate that $W$ is a relatively weakly open set containing $0$. However, it does not contain any slice of $B_{L_1}$. Indeed, given any slice $S$ of $B_{L_1}$, by the proof of the above case we can find $\xi\in \{-1,1\}$ and $C\subseteq \Omega$ with $\mu(C)>0$ such that $\varphi:=\xi\frac{\chi_C}{\mu(C)}\in S$. However,
\[
\begin{split}
\int_\Omega \varphi\ d\mu=\frac{\xi}{\mu(C)}\int_\Omega\chi_C\ d\mu=\xi \notin \left( -\frac{1}{2},\frac{1}{2}\right) .
\end{split}
\]
\end{proof}

Now we move to the purely atomic case, obtaining the following result.

\begin{proposition}\label{prop:l1purelyatomic}
Let $I$ be a non-empty set and consider $X=\ell_1(I)$. Then the APEPs of $B_{\ell_1(I)}$ are the standard basis vectors $\pm e_i$, $i\in I$.
\end{proposition}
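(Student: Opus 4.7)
The plan has two directions. For the forward direction, each $\pm e_i$ is strongly exposed in $B_{\ell_1(I)}$ by the coordinate functional $\pm e_i^* \in \ell_\infty(I)$: the slice $\{y \in B_{\ell_1(I)} : y_i > 1-\varepsilon\}$ has diameter at most $4\varepsilon$, because $y_i > 1-\varepsilon$ forces $\sum_{j\neq i} |y_j| < \varepsilon$ and $|y_i-1|<\varepsilon$. Hence $\pm e_i$ is denting, in particular an APEP.

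For the converse, the key observation is: every non-empty slice $S = S(B_{\ell_1(I)}, g, \alpha)$ of $B_{\ell_1(I)}$ contains some $\pm e_i$, because choosing $i \in I$ with $|g_i| > \|g\|_\infty - \alpha$ yields $\sign(g_i)\,e_i \in S$. Therefore, to show that a point $x \in B_{\ell_1(I)} \setminus \{\pm e_i : i \in I\}$ fails to be APEP, it suffices to exhibit a weakly open neighborhood $W$ of $x$ in $B_{\ell_1(I)}$ such that no $\pm e_i$ lies in $W$.

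Such a $W$ will be built by producing $f \in \ell_\infty(I)$ with $|f_i| = 1$ for every $i \in I$ and $|f(x)| < 1$. Indeed, $W = \{y \in B_{\ell_1(I)} : |f(y)| < 1\}$ is then weakly open (the preimage of $(-1,1)$ under the continuous linear functional $f$), contains $x$, and excludes every $\pm e_i$ since $|f(\pm e_i)| = |f_i| = 1$.

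The construction of $f$ splits into two cases. If $\|x\|_1 < 1$, any $f \in \{-1,1\}^I$ satisfies $|f(x)| \leq \|x\|_1 < 1$. If $\|x\|_1 = 1$, then $x \neq \pm e_i$ forces $|\supp(x)| \geq 2$; fixing distinct $i_0, i_1 \in \supp(x)$, set $f_{i_1} = -\sign(x_{i_1})$ and $f_j = \sign(x_j)$ for $j \neq i_1$ (with $\sign(0) = 1$, say). Then
\[
f(x) = \|x\|_1 - 2|x_{i_1}| = 1 - 2|x_{i_1}| \in (-1,1),
\]
since $0 < |x_{i_1}| < \|x\|_1 = 1$, where the strict upper bound uses precisely the existence of the second index $i_0 \in \supp(x)$. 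This single strict inequality is really the only delicate point in the argument.
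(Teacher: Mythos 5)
Your proof is correct and takes essentially the same route as the paper: the forward direction is the identical denting-point observation, and your converse amounts to re-deriving the content of Lemma~\ref{lemma:milman_apep} inline (every non-empty slice of $B_{\ell_1(I)}$ meets $\{\pm e_i\}$) and then separating $x$ weakly from $\{\pm e_i: i\in I\}$. The only cosmetic difference is that the paper invokes that lemma and identifies the weak closure of $\{\pm e_i\}$ inside the ball by testing against the coordinate functionals $e_i^*$ and the constant function $1_I$, whereas you construct a single tailored $\pm 1$-valued functional $f$ with $|f(x)|<1$; both arguments are complete, including the cases $x=0$ and $\|x\|_1<1$.
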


\begin{proof}
Since $B_{\ell_1(I)}=\cconv(\{\pm e_i: i\in I\})$ we infer from Lemma \ref{lemma:milman_apep} that if $x$ is an APEP of $B_{\ell_1(I)}$ then $x\in \overline{\{\pm e_i: i\in I\}}^{w} = \{\pm e_i: i\in I\}$. In the opposite direction, it is clear that every element of the form $\pm e_i$ is a denting point and therefore an APEP.
\end{proof}

Now we can give a description of the APEPs in an $L_1(\mu)$ space.

\begin{theorem}\label{theo:charapepL1}
Let $(\Omega,\Sigma,\mu)$ be a localisable measure space and let $f\in B_{L_1(\mu)}$. The following are equivalent:
\begin{enumerate}
    \item $f$ is an APEP of $B_{L_1(\mu)}$.
    \item $f$ is a denting point of $B_{L_1(\mu)}$.
    \item $f=\pm \frac{\chi_A}{\mu(A)}$, where $A$ is an atom of $\mu$.
\end{enumerate}
\end{theorem}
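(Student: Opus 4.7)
My plan is to prove the chain of implications $(3)\Rightarrow(2)\Rightarrow(1)\Rightarrow(3)$, of which only the last requires genuine work, since the other two are either standard or immediate from the definitions.

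For $(3)\Rightarrow(2)$, I would verify that $f=\pm\chi_A/\mu(A)$ is even strongly exposed: the functional $\chi_A\in L_\infty(\mu)=L_1(\mu)^*$ attains its supremum on $B_{L_1(\mu)}$ exactly at $\chi_A/\mu(A)$ because $A$ is an atom, so slices of the form $S(B_{L_1(\mu)},\pm\chi_A,\alpha)$ shrink to a point as $\alpha\to 0$ (any function in such a slice must have almost all its mass of the correct sign concentrated on $A$, where it is forced to be essentially constant since $A$ is an atom). The implication $(2)\Rightarrow(1)$ is the general fact that denting points are APEPs, as recalled in the introduction.

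For the main implication $(1)\Rightarrow(3)$, I would exploit the decomposition \eqref{eq:Maharam}, that is, $L_1(\mu)=L_1(\nu)\oplus_1\ell_1(I)$, where $\nu$ is atomless and $I$ indexes the atoms of $\mu$. The key tool is the stability of APEPs in $\ell_1$-sums (Proposition~\ref{prop:APEPlusumas}, to be established in Subsection~\ref{subsection:absolutesums}), which asserts that every APEP of the unit ball of $X\oplus_1 Y$ is of the form $(x,0)$ for some $x\in\ape{B_X}$ or $(0,y)$ for some $y\in\ape{B_Y}$. Applying this to our decomposition and combining with Proposition~\ref{prop:L1}, which rules out APEPs on the atomless summand $L_1(\nu)$, every APEP of $B_{L_1(\mu)}$ must live entirely in the $\ell_1(I)$ summand. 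Proposition~\ref{prop:l1purelyatomic} then identifies those APEPs as the signed basis vectors $\pm e_i$, which under the canonical identification between $e_i$ and the normalised characteristic function of the $i$-th atom $A_i$ translate precisely into the functions $\pm\chi_{A_i}/\mu(A_i)$ listed in $(3)$.

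The main obstacle is conceptual rather than computational: the whole argument relies on the (forthcoming) $\ell_1$-sum stability result, which we have not yet established in the text. Everything else is bookkeeping, since the atomless and purely atomic cases have already been dispatched in Propositions~\ref{prop:L1} and \ref{prop:l1purelyatomic}. Once Proposition~\ref{prop:APEPlusumas} is in hand, the proof of Theorem~\ref{theo:charapepL1} amounts to a one-line application of that stability result combined with the two previous propositions.
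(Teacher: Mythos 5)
Your proposal follows essentially the same route as the paper: decompose via \eqref{eq:Maharam}, kill the atomless summand with Proposition~\ref{prop:L1}, and identify the atomic APEPs with Proposition~\ref{prop:l1purelyatomic}; the implications $(3)\Rightarrow(2)\Rightarrow(1)$ are likewise dispatched as standard. The one wrinkle is your choice of $\ell_1$-sum tool: Proposition~\ref{prop:APEPlusumas} as stated only describes \emph{nonzero} APEPs, so as cited it does not exclude the possibility $0\in\ape{B_{L_1(\mu)}}$, which would contradict $(1)\Rightarrow(3)$. This is harmless once you invoke instead the two-summand Proposition~\ref{prop:APEP_finite_l1sum} (which is what the paper's proof actually uses): it gives the full equality $\ape{B_{L_1(\nu)\oplus_1\ell_1(I)}}=(\ape{B_{L_1(\nu)}}\times\{0\})\cup(\{0\}\times\ape{B_{\ell_1(I)}})$, and since neither $\ape{B_{L_1(\nu)}}$ nor $\ape{B_{\ell_1(I)}}$ contains $0$, the zero case is ruled out and the rest of your argument goes through verbatim.
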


\begin{proof}
It is well known that $(3)\Rightarrow (2)\Rightarrow(1)$. We prove $(1)\Rightarrow (3)$. According to the decomposition in \eqref{eq:Maharam} we see $f=(g,h)\in L_1(\nu)\oplus_1 \ell_1(I)$. 

Proposition~\ref{prop:APEP_finite_l1sum} below together with the preceding paragraph yield that $f$ is an APEP if, and only if, either $g=0$ and $h\in\ape{B_{\ell_1(I)}}$ or $h=0$ and $g\in\ape{B_{L_1(\nu)}}$. However the latter is impossible due to Proposition~\ref{prop:L1}. Consequently, $f$ is an APEP if, and only if, $f=(0,h)$, where $h$ is an APEP in $B_{\ell_1(I)}$. But now Proposition~\ref{prop:l1purelyatomic} implies that the above holds true if, and only if, $h=\pm e_i$ for some $i\in I$. Now, taking into account the identification of $\ell_1(I)$ with the purely atomic measures in \eqref{eq:Maharam}, the result follows.
\end{proof}

\subsection{Absolute sums of Banach spaces}\label{subsection:absolutesums}

Now, we focus on studying APEPs of the unit ball in $\ell_p$-sums of Banach spaces. We start with the case $p=1$. For finite sums, we have an easy characterisation.

\begin{proposition}\label{prop:APEP_finite_l1sum}
Let $Y,Z$ be Banach spaces. Then
$$
\ape{B_{Y\oplus_1 Z}} = (\ape{B_Y}\times\set{0}) \cup (\set{0}\times\ape{B_Z}) .
$$
\end{proposition}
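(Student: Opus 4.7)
My plan is to prove both inclusions via the weak$^*$ characterisation of APEPs given by Theorem~\ref{theo:caraAPEP}, reducing the problem to the well-known description of the extremal structure of the unit ball of an $\ell_1$-sum. The key observation is that $(Y\oplus_1 Z)^{**}$ identifies isometrically with $Y^{**}\oplus_1 Z^{**}$, the weak$^*$ closure of $B_{Y\oplus_1 Z}$ inside this bidual is the whole ball $B_{Y^{**}\oplus_1 Z^{**}}$, and the weak$^*$ topology there is just the product of the two coordinate weak$^*$ topologies. Combined with the standard description
\[
\ext\bigl(B_{Y^{**}\oplus_1 Z^{**}}\bigr) = \bigl(\ext\bigl(B_{Y^{**}}\bigr)\times\{0\}\bigr) \cup \bigl(\{0\}\times\ext\bigl(B_{Z^{**}}\bigr)\bigr),
\]
(since any $(u,v)$ with $\|u\|+\|v\|=1$ and both coordinates nonzero splits as a nontrivial convex combination), Theorem~\ref{theo:caraAPEP} applied to both $B_{Y\oplus_1 Z}$ and to $B_Y, B_Z$ separately will hand us the characterisation.

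For the inclusion $\supseteq$, if $y_0\in\ape{B_Y}$ then by Theorem~\ref{theo:caraAPEP} there is a net $(u^{**}_\alpha)\subseteq\ext(B_{Y^{**}})$ with $u^{**}_\alpha\wsconv y_0$. The padded net $(u^{**}_\alpha,0)$ lies in $\ext(B_{Y^{**}\oplus_1 Z^{**}})$ and converges weak$^*$ to $(y_0,0)$, so another application of Theorem~\ref{theo:caraAPEP} gives $(y_0,0)\in\ape{B_{Y\oplus_1 Z}}$. The case of $(0,z_0)$ is symmetric.

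For the converse, fix $(y_0,z_0)\in\ape{B_{Y\oplus_1 Z}}$ and use Theorem~\ref{theo:caraAPEP} to produce a net $(u^{**}_\alpha,v^{**}_\alpha)\in\ext(B_{Y^{**}\oplus_1 Z^{**}})$ with $(u^{**}_\alpha,v^{**}_\alpha)\wsconv(y_0,z_0)$. By the description of the extreme points, the index set $D$ splits as $D=A\cup B$ with $v^{**}_\alpha=0$ for $\alpha\in A$ and $u^{**}_\alpha=0$ for $\alpha\in B$. The central small combinatorial step (and the only mild subtlety here) is a standard cofinality argument in a directed set: at least one of $A, B$ is cofinal in $D$, for otherwise one could find $\alpha_A,\alpha_B\in D$ above which neither $A$ nor $B$ occurs, contradicting directedness when one takes a common upper bound. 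Passing to the cofinal subnet in, say, $A$, the second coordinate is identically zero, so $z_0=0$ and $u^{**}_\alpha\wsconv y_0$ with each $u^{**}_\alpha\in\ext(B_{Y^{**}})$; Theorem~\ref{theo:caraAPEP} then yields $y_0\in\ape{B_Y}$. The other case gives $y_0=0$ and $z_0\in\ape{B_Z}$, completing the proof. The only place requiring any care is this cofinality/subnet step, since naively the two ``types'' of extreme points could interlace along the net.
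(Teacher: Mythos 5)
Your proof is correct and follows essentially the same route as the paper: identify $(Y\oplus_1 Z)^{**}$ with $Y^{**}\oplus_1 Z^{**}$, use the standard description of the extreme points of the unit ball of an $\ell_1$-sum, and conclude via Theorem~\ref{theo:caraAPEP}. The only cosmetic difference is that where the paper invokes the weak$^*$-closed embeddings of the summands and the identity $\wscl{A\cup B}=\wscl{A}\cup\wscl{B}$, you unpack the same fact into an explicit net argument with the cofinality/subnet step.
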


\begin{proof}
Let $X=Y\oplus_1 Z$, then $X^{**}=Y^{**}\oplus_1 Z^{**}$ and therefore 
$$\ext{B_{X^{**}}}=(\ext{B_{Y^{**}}}\times\set{0}) \cup (\set{0}\times \ext{B_{Z^{**}}}).$$
Clearly, $Y^{**}\times\set{0}\subseteq X^{**}$ is weak$^*$-weak$^*$-homeomorphic to $Y^{**}$ (and similarly for $\set{0}\times Z^{**}$), so
$$
\wscl{\ext{B_{X^{**}}}}=(\wscl{\ext{B_{Y^{**}}}}\times\set{0}) \cup (\set{0}\times \wscl{\ext{B_{Z^{**}}}}) .
$$
The result now follows immediately from Theorem \ref{theo:caraAPEP}.
\end{proof}

The argument of Proposition \ref{prop:APEP_finite_l1sum} extends seamlessly (or by induction) to finite $\ell_1$-sums. For infinite sums, however, the bidual does not admit such a simple expression so a different argument is needed. In that case, we are able to characterise all non-zero APEPs.

\begin{proposition}\label{prop:APEPlusumas}
Let $X$ be the $\ell_1$-sum of a family $\set{X_i:i\in I}$ of Banach spaces.
\begin{itemize}
\item[a)] If $x=(x_i)\neq 0$ is an APEP of $B_X$, then there exists an index $j\in I$ such that $x_j\in\ape{B_{X_j}}$ and $x_i=0$ for all $i\neq j$.
\item[b)] If $x_j\in\ape{B_{X_j}}$, then the element $x=(u_i)$ defined as $u_j=x_j$ and $u_i=0$ for $i\neq j$ is an APEP of $B_X$.
\end{itemize}
\end{proposition}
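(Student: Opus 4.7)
The plan is to run everything through the characterisation in Theorem~\ref{theo:caraAPEP} after separating one index $j$ from the rest. For any fixed $j\in I$, write $X = X_j \oplus_1 Z$ with $Z = \bigoplus_{i\neq j} X_i$; then $X^* = X_j^* \oplus_\infty Z^*$, and dualising once more gives the isometric decomposition $X^{**} = X_j^{**} \oplus_1 Z^{**}$. In particular
\[
\ext{B_{X^{**}}} = \pare{\ext{B_{X_j^{**}}}\times\set{0}}\cup \pare{\set{0}\times\ext{B_{Z^{**}}}},
\]
and the weak$^*$ topology on $X^{**}$ restricts to each factor as its intrinsic weak$^*$ topology, since the pairing with $(f,g)\in X_j^*\oplus_\infty Z^*$ only engages the corresponding coordinate. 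Under this decomposition, the element of $X$ with $x_j$ in coordinate $j$ and zeros elsewhere is identified with $(x_j,0)$.

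For (b), assume $x_j \in \ape{B_{X_j}}$ and let $x = (x_j, 0)$. By Theorem~\ref{theo:caraAPEP} there is a net $(e_\alpha)\subseteq\ext{B_{X_j^{**}}}$ converging weak$^*$ to $x_j$. Then $(e_\alpha,0) \in \ext{B_{X^{**}}}$ converges weak$^*$ to $(x_j,0) = x$ in $X^{**}$, so another application of Theorem~\ref{theo:caraAPEP} gives $x \in \ape{B_X}$.

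For (a), I proceed in two steps, both using that $x\in\overline{\ext{B_{X^{**}}}}^{w^*}$ by Theorem~\ref{theo:caraAPEP}. \emph{Step 1: at most one coordinate of $x$ is nonzero.} Suppose towards a contradiction that $x_j, x_k \neq 0$ with $j\neq k$. Using the three-summand decomposition $X = X_j \oplus_1 X_k \oplus_1 Z'$, the bidual splits analogously and its extreme points sit on exactly one of three axes. Pick $g_j\in S_{X_j^*}$ with $g_j(x_j) = \norm{x_j}$ and $g_k\in S_{X_k^*}$ with $g_k(x_k) = \norm{x_k}$; lift them trivially to $\tilde g_j, \tilde g_k \in X^*$, and consider
\[
U = \set{y\in X^{**} : \abs{\langle y-x,\tilde g_j\rangle}<\varepsilon,\ \abs{\langle y-x,\tilde g_k\rangle}<\varepsilon}
\]
for some $\varepsilon<\min(\norm{x_j},\norm{x_k})$. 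At least one of $\tilde g_j, \tilde g_k$ annihilates any extreme point of $B_{X^{**}}$ (since it lives on a single axis), producing a deviation from $x$ of magnitude $\norm{x_j}$ or $\norm{x_k}$; hence $U\cap\ext{B_{X^{**}}} = \emptyset$, contradicting $x \in \overline{\ext{B_{X^{**}}}}^{w^*}$.

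\emph{Step 2: the unique nonzero coordinate is APEP.} Writing $x = (x_j,0)$ with $x_j \neq 0$ in the two-summand decomposition, take a net $(e_\alpha)\subseteq\ext{B_{X^{**}}}$ with $e_\alpha \to (x_j,0)$ weak$^*$. The indices $\alpha$ with $e_\alpha \in \ext{B_{X_j^{**}}}\times\set{0}$ must form a cofinal subset, because otherwise the first-coordinate projection (weak$^*$-continuous, as the adjoint of the inclusion $X_j^* \hookrightarrow X^*$) would eventually annihilate the net and force $x_j = 0$. Passing to this subnet and discarding the trivial second coordinate gives a net in $\ext{B_{X_j^{**}}}$ converging weak$^*$ to $x_j$, so Theorem~\ref{theo:caraAPEP} yields $x_j \in \ape{B_{X_j}}$. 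The only real bookkeeping is the isometric identification $X^{**} = X_j^{**} \oplus_1 Z^{**}$ together with its weak$^*$ compatibility, which is a standard consequence of the $\ell_1/\ell_\infty$ duality for two-fold sums.
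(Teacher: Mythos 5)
Your proof is correct, but it takes a genuinely different route from the paper's. The paper argues intrinsically in $X$ with slices: for a) it builds a relatively weakly open set from two norming functionals and shows, using $X^*=\pare{\bigoplus X_i^*}_\infty$, that any slice $S(B_X,f,\alpha)$ contained in it would force $\norm{f_i}\leq 1-\alpha$ for every coordinate, contradicting $\norm{f}=1$, and then transfers slices between $B_X$ and $B_{X_j}$ by hand; for b) it explicitly manufactures a slice of $B_X$ inside a prescribed basic weak neighbourhood, exploiting that its elements have small mass outside coordinate $j$. You instead run everything through Theorem~\ref{theo:caraAPEP} after splitting off a single coordinate, $X=X_j\oplus_1 Z$, so that the finite-sum identities $X^{**}=X_j^{**}\oplus_1 Z^{**}$ and $\ext{B_{X^{**}}}=\pare{\ext{B_{X_j^{**}}}\times\set{0}}\cup\pare{\set{0}\times\ext{B_{Z^{**}}}}$ apply verbatim even when $I$ is infinite; this is exactly the mechanism of Proposition~\ref{prop:APEP_finite_l1sum}, and it neatly sidesteps the obstruction mentioned in the paper just before the statement (that the bidual of an infinite $\ell_1$-sum has no simple expression), since only a two-fold (or, in your Step 1, three-fold) splitting is ever needed. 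The supporting checks — that the weak$^*$ topology of $X^{**}$ is coordinatewise for such a splitting, that the coordinate projection is weak$^*$-continuous as the adjoint of the inclusion $X_j^*\hookrightarrow X^*$, and the cofinality argument in Step 2 — are all sound. What each approach buys: yours is shorter, uniform with the finite case, and reduces the statement to standard bidual extreme-point bookkeeping; the paper's construction stays inside $X$, exhibits the witnessing slices explicitly, and avoids invoking the bidual characterisation altogether.
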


\begin{proof}
a) Let $x=(x_i)$ be an APEP of $B_X$, and suppose that there are two indices $j_1\neq j_2\in I$ such that $x_{j_1},x_{j_2}$ are non-zero. For $k=1,2$ let $\varphi_k\in S_{X_{j_k}^*}$ be such that $\varphi_k(x_{j_k})=\norm{x_{j_k}}$, and consider the set
$$
W = \set{z=(z_i)\in B_X \,:\, \varphi_1(z_{j_1})>0 \text{ and } \varphi_2(z_{j_2})>0} .
$$
Then $W$ is a relatively weakly open neighbourhood of $x$. Since $x$ is an APEP, $W$ must contain a slice of the form $S=S(B_X,f,\alpha)$ for some $f\in S_{X^*}$ and $\alpha>0$. Identify $X^*$ with the $\ell_\infty$-sum of the spaces $\set{X_i^*:i\in I}$ and write $f=(f_i)$. Suppose that $\norm{f_j}>1-\alpha$ for some $j\in I$, choose $y\in B_{X_j}$ such that $f_j(y)>1-\alpha$ and let $z=(z_i)$ be such that $z_j=y$ and $z_i=0$ for $i\neq j$. Then $f(z)=f_j(y)>1-\alpha$ so $z\in S\subseteq W$, but this is not possible as either $z_{j_1}=0$ or $z_{j_2}=0$. This shows that $\norm{f_j}\leq 1-\alpha$ for all $j\in I$, contradicting $\norm{f}=1$. Hence, there must exist $j\in I$ such that $x_i=0$ for all $i\neq j$.

Let us see that $x_j$ is an APEP of $B_{X_j}$ if $x_j\neq 0$. Let $W\subseteq B_{X_j}$ be a relatively weakly open neighbourhood of $x_j$, and let $\varphi\in X_j^*$ such that $\varphi(x_j)=\norm{x_j}>0$. Define
$$W'=\left\{ z=(z_i)\in B_X : z_j\in W, \ \varphi(z_j)>0 \right\},$$
which is clearly a relatively weakly open subset of $B_X$ containing $x$. Since $x$ is an APEP, there is a slice $S=S(B_X,f,\alpha)$, for some $f=(f_i)\in S_{X^*}$ and $\alpha>0$, such that $S\subseteq W'$. We claim that $\norm{f_i}\leq 1-\alpha$ for all $i\neq j$. If not, there is some $k\neq j$ such that $\norm{f_k}>1-\alpha$, and so we can find $y\in B_{X_k}$ such that $f_k(y)>1-\alpha$. Hence, the point $z=(z_i)\in B_X$ such that $z_i=0$ for all $i\neq k$ and $z_k=y$ satisfies $f(z)=f_k(y)>1-\alpha$. Thus, $z\in S\subseteq W'$ but $\varphi(z_j)=0$, obtaining a contradiction. Therefore, $\norm{f_i}\leq 1-\alpha$, for all $i\neq j$, and since $X^*=\pare{\bigoplus X^*_i}_\infty$, it follows that $\norm{f_j}=1$. Now, define the slice $S_j=\{ y\in B_{X_j} : f_j(y)>1-\alpha\}$. If $y\in S_j$, then the point $z=(z_i)\in B_X$ such that $z_i=0$ for $i\neq j$ and $z_j=y$ satisfies $z\in S\subseteq W'$. Thus, $y=z_j\in W$, from which we conclude that $S_j\subseteq W$ and $x_j$ is therefore an APEP.

b) Let $x=(x_i)$ be a point in $B_X$ such that there is $j\in I$ with $x_i=0$ for $i\neq j$ and $x_j\in\ape{B_{X_j}}$. Let us show that $x\in\ape{B_X}$. Let $W$ be a relatively weakly open neighbourhood of $x$ in $B_X$. We may assume that $W$ is a basic weakly open set of the form $$W=\{ y\in B_X : |f^1(y-x)|<\varepsilon, \ldots, |f^n(y-x)|<\varepsilon\}$$
for some functionals $f^1=(f^1_i), \ldots, f^n=(f^n_i)$ in the unit ball of $ X^*=\pare{\bigoplus X^*_i}_\infty$ and some $\varepsilon>0$. Now, consider $$W_j =\left\{ z\in B_{X_j} : |f^1_j(z-x_j)|<\frac{\varepsilon}{2}, \ldots, |f^n_j(z-x_j)|<\frac{\varepsilon}{2} \right\},$$
which is a relatively weakly open neighbourhood of $x_j$ in $B_{X_j}$. Since $x_j$ is APEP and $x_j\in W_j$, we can find a slice $S_j=\{ z\in B_{X_j} : g(z)>1-\alpha\}$ for some $g\in S_{X_j^*}$ and $\alpha>0$, such that $S_j\subseteq W_j$. We can always assume that $\alpha\leq\frac{\varepsilon}{2}$ since $S_j$ is non-empty for all $\alpha>0$. Finally, consider the slice $S=\{ y=(y_i)\in B_X : g(y_j)>1-\alpha\}$. It is clear that if $z\in S_j\neq \emptyset$, then the point $y=(y_i)\in B_X$ such that $y_i=0$ for $i\neq j$ and $y_j=z$, satisfies that $y\in S$, so $S\neq \emptyset$. Furthermore, pick $y=(y_i)\in S$, and denote by $\hat{y}=(\hat{y}_i)$ the element of $X$ such that $\hat{y}_i=y_i$ for $i\neq j$ and $\hat{y}_j=0$. It is clear that $\norm{y_j}\geq g(y_j)>1-\alpha\geq1-\frac{\varepsilon}{2}$, so $\norm{\hat{y}}=\norm{y}-\norm{y_j}<\frac{\varepsilon}{2}$. 
Therefore, we have
$$|f^p(y-x)|\leq |f^p_j(y_j-x_j)| + |f^p(\hat{y})|<\frac{\varepsilon}{2}+\frac{\varepsilon}{2}=\varepsilon, \quad  \ 1\leq p \leq n,$$
since $y_j\in S_j\subseteq W_j$. This proves that $y\in W$, from which we conclude that $S\subseteq W$. Thus $x$ is an APEP of $B_X$.
\end{proof}

The only case that remains unclear is whether it is possible to have $0\in\ape{B_X}$ when $0\notin\ape{B_{X_i}}$ for all $i\in I$.

\medskip

Next, we consider $\ell_p$-sums of Banach spaces for $1<p<\infty$. It is easier to use arguments based on Theorem~\ref{theo:caraAPEP} in this case as, given a family $\set{X_i:i\in I}$ of Banach spaces, one has $(\bigoplus_i X_i)_p^{**}=(\bigoplus_i X_i^{**})_p$ and
\begin{equation}\label{eq:extreme_ell_p}
\ext{B_{(\bigoplus X_i)_p}} = \set{(x_i)\in S_{(\bigoplus X_i)_p} \,:\, \forall i, x_i=0 \text{ or } \frac{x_i}{\norm{x_i}}\in\ext{B_{X_i}}}.
\end{equation}
Using these descriptions, we can begin with the following necessary condition for APEPs in the unit sphere.

\begin{lemma}\label{lemma:sumapnececondi}
Let $X$ and $Y$ be two Banach spaces and let $(x_0,y_0)\in S_{X\oplus_p Y}$ be an APEP of $B_{X\oplus_p Y}$. Then either $x_0=0$ or $\frac{x_0}{\Vert x_0\Vert}$ is an APEP of $B_X$.\end{lemma}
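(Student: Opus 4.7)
\medskip

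Plan: I would invoke Theorem~\ref{theo:caraAPEP} to reformulate both the hypothesis and the conclusion in terms of $w^*$-closures of extreme points of biduals. Setting $Z = X \oplus_p Y$, the bidual is $Z^{**} = X^{**} \oplus_p Y^{**}$, and the description~\eqref{eq:extreme_ell_p} of extreme points of an $\ell_p$-sum ball transfers to the bidual. From $(x_0,y_0)\in\ape{B_Z}$ we obtain a net of extreme points $(a_\alpha,b_\alpha)\in\ext{B_{Z^{**}}}$ with $(a_\alpha,b_\alpha) \wsconv (x_0,y_0)$; hence $\|a_\alpha\|^p+\|b_\alpha\|^p=1$ for every $\alpha$, and $a_\alpha/\|a_\alpha\|\in\ext{B_{X^{**}}}$ whenever $a_\alpha\neq 0$.

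Assuming $x_0\neq 0$, I pick $f\in X^*$ with $f(x_0)>0$ and view it as a functional on $Z$; then $f(a_\alpha)\to f(x_0)>0$ forces $a_\alpha\neq 0$ eventually, and I pass to this subnet. The crux of the argument is to show that $\|a_\alpha\|\to\|x_0\|$ along some further subnet, which will let weak$^*$ convergence commute with normalisation. The $w^*$-lower semicontinuity of the norm on $X^{**}$ and on $Y^{**}$ gives $\|x_0\|^p\leq\liminf_\alpha \|a_\alpha\|^p$ and $\|y_0\|^p\leq\liminf_\alpha \|b_\alpha\|^p$; combined with the identity $\|a_\alpha\|^p+\|b_\alpha\|^p=1=\|x_0\|^p+\|y_0\|^p$ and the super-additivity of $\liminf$, this yields
\[
1 = \|x_0\|^p+\|y_0\|^p \leq \liminf_\alpha\|a_\alpha\|^p + \liminf_\alpha\|b_\alpha\|^p \leq \liminf_\alpha\pare{\|a_\alpha\|^p+\|b_\alpha\|^p} = 1,
\]
so each inequality is in fact an equality; in particular $\liminf_\alpha\|a_\alpha\|=\|x_0\|$, and passing to a subnet produces $\|a_\alpha\|\to\|x_0\|$.

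The conclusion then comes for free: for each $g\in X^*$, $g(a_\alpha/\|a_\alpha\|)=g(a_\alpha)/\|a_\alpha\|\to g(x_0)/\|x_0\|$, so $a_\alpha/\|a_\alpha\|\wsconv x_0/\|x_0\|$. Since each $a_\alpha/\|a_\alpha\|$ is extreme in $B_{X^{**}}$, this places $x_0/\|x_0\|$ in $\overline{\ext{B_{X^{**}}}}^{w^*}$, and a second application of Theorem~\ref{theo:caraAPEP} (now to $B_X$) finishes the proof. The main obstacle I anticipate is the $\liminf$ computation above: it is precisely the $\ell_p$ identity $\|a_\alpha\|^p+\|b_\alpha\|^p=1$ for extreme points that upgrades the $w^*$-lower semicontinuity of the norm to an actual limit, without which normalisation would not be compatible with the $w^*$-convergence and the argument would collapse.
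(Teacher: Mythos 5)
Your proposal is correct and follows essentially the same route as the paper: pass via Theorem~\ref{theo:caraAPEP} to a net of extreme points of $B_{X^{**}\oplus_p Y^{**}}$ converging weak$^*$ to $(x_0,y_0)$, use the $\ell_p$-identity together with weak$^*$-lower semicontinuity of the norms to force $\|a_\alpha\|\to\|x_0\|$ (the paper extracts subnets with convergent norms first and then runs the same inequality chain, which is just a cosmetic variant of your $\liminf$ argument), and then normalise to exhibit $x_0/\|x_0\|$ as a weak$^*$ limit of extreme points of $B_{X^{**}}$. No gaps; the argument matches the paper's proof in all essential points.
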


\begin{proof}
Assume that $x_0\neq 0$ and let us prove that $\frac{x_0}{\Vert x_0\Vert}$ is an APEP. Since $(x_0,y_0)$ is an APEP of $B_{X\oplus_p Y}$ we can find by virtue of Theorem~\ref{theo:caraAPEP} a net $(e_s,f_s)$ of extreme points of $B_{(X\oplus_p Y)^{**}}=B_{X^{**}\oplus_p Y^{**}}$ such that $(e_s,f_s)\stackrel{w^*}{\to} (x_0,y_0)$. This implies that both $(e_s)\stackrel{w^*}{\to} x_0$ and $(f_s)\stackrel{w^*}{\to} y_0$. Since both $(e_s)$ and $(f_s)$ are bounded nets we can assume, up to taking subnets, that $\Vert e_s\Vert\rightarrow \lambda$ and $\Vert f_s\Vert\rightarrow \mu$. The weak* lower semicontinuity of the norm of $X^{**}$ and $Y^{**}$ implies $\Vert x_0\Vert\leq \lambda$ and $\Vert y_0\Vert\leq \mu$. Note that
\[ 1 = \norm{x_0}^p + \norm{y_0}^p \leq \lambda^p + \mu^p = \lim_s \norm{e_s}^p+\norm{f_s}^p = \lim_s \norm{(e_s,f_s)}^p \leq 1\]
so indeed $\norm{x_0}=\lambda$ and $\norm{y_0}=\mu$.

Now, up to taking a further subnet, since $\Vert e_s\Vert\rightarrow \lambda=\Vert x_0\Vert>0$ we can assume that $e_s\neq 0$ holds for every $s$. Since $(e_s,f_s)$ is an extreme point of $B_{X^{**}\oplus_p Y^{**}}$ and $e_s\neq 0$ we infer $\frac{e_s}{\Vert e_s\Vert}\in \ext{B_{X^{**}}}$ for every $s$. Since $e_s\stackrel{w^*}{\to} x_0$ and $\Vert e_s\Vert\rightarrow \Vert x_0\Vert$ we get that
$$\frac{e_s}{\Vert e_s\Vert}\wsconv \frac{x_0}{\Vert x_0\Vert} .$$
We conclude that $\frac{x_0}{\Vert x_0\Vert}$ is an APEP of $B_X$ by Theorem~\ref{theo:caraAPEP}, as it is the weak$^*$ limit of a net of extreme points of $B_{X^{**}}$.
\end{proof}

Now we are able to characterise APEPs of norm $1$ for $1<p<\infty$.

\begin{proposition}\label{prop:APEPlpsumas}
Let $X$ be the $\ell_p$-sum of a family $\set{X_i:i\in I}$ of Banach spaces, where $1<p<\infty$, and let $(x_i)\in S_X$. The following assertions are equivalent:
\begin{enumerate}
    \item $(x_i)$ is an APEP of $B_X$. 
    \item For every $i\in I$, either $x_i=0$ or $\frac{x_i}{\Vert x_i\Vert}$ is an APEP of $B_{X_i}$.
\end{enumerate}
\end{proposition}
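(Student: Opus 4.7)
The plan is to use Theorem~\ref{theo:caraAPEP} together with the characterization \eqref{eq:extreme_ell_p} of extreme points of $B_{(\bigoplus X_i)_p}$, which extends verbatim to the bidual $X^{**}=(\bigoplus X_i^{**})_p$. Thus both implications reduce to arguments about weak$^*$ limits of appropriate extreme points.

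For (1)$\Rightarrow$(2) I would essentially lift Lemma~\ref{lemma:sumapnececondi} to arbitrary families. Fix $j\in I$ with $x_j\neq 0$. By Theorem~\ref{theo:caraAPEP} pick a net $(e^\alpha)=(e^\alpha_i)_i$ in $\ext(B_{X^{**}})$ with $e^\alpha\wsconv (x_i)$. For each $i$, coordinate projection is weak$^*$-weak$^*$ continuous, so $e^\alpha_i\wsconv x_i$. Since $(\|e^\alpha_i\|)_i\in [0,1]^I$ and $[0,1]^I$ is compact in the product topology (Tychonoff), by passing to a subnet we may assume $\|e^\alpha_i\|\to\lambda_i$ for every $i\in I$, with $\|x_i\|\leq\lambda_i$ by weak$^*$-lower semicontinuity of the norm. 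For every finite $F\subseteq I$ we have $\sum_{i\in F}\lambda_i^p=\lim_\alpha\sum_{i\in F}\|e^\alpha_i\|^p\leq 1$, so $\sum_i\lambda_i^p\leq 1$, and the equality $1=\sum_i\|x_i\|^p\leq\sum_i\lambda_i^p\leq 1$ forces $\lambda_i=\|x_i\|$ for every $i$. In particular $\|e^\alpha_j\|\to\|x_j\|>0$, so dropping initial terms we may assume $e^\alpha_j\neq 0$. By \eqref{eq:extreme_ell_p} applied in the bidual, $e^\alpha_j/\|e^\alpha_j\|\in\ext(B_{X_j^{**}})$, and this net converges weak$^*$ to $x_j/\|x_j\|$; another application of Theorem~\ref{theo:caraAPEP} gives $x_j/\|x_j\|\in\ape{B_{X_j}}$.

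For (2)$\Rightarrow$(1), by Theorem~\ref{theo:caraAPEP} it suffices to show $(x_i)\in\overline{\ext(B_{X^{**}})}^{w^*}$. Let $J=\{i:x_i\neq 0\}$ (a countable set) and fix a basic weak$^*$ neighborhood
\[
U=\{(y_i):|\langle f^{(k)},(y_i)-(x_i)\rangle|<\varepsilon,\ k=1,\dots,n\}
\]
of $(x_i)$, where $f^{(k)}=(f^{(k)}_i)\in X^*=(\bigoplus X_i^*)_q$. Given $\delta>0$ (to be fixed in terms of $\varepsilon$), choose a finite $F\subseteq J$ such that both $\sum_{i\notin F}\|x_i\|^p<\delta^p$ and $\bigl(\sum_{i\notin F}\|f^{(k)}_i\|^q\bigr)^{1/q}<\delta$ for all $k$. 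Set $s=\bigl(\sum_{i\in F}\|x_i\|^p\bigr)^{1/p}$ (close to $1$) and $\mu_i=\|x_i\|/s$ for $i\in F$, so $\sum_{i\in F}\mu_i^p=1$. For each $i\in F$, since $x_i/\|x_i\|\in\ape{B_{X_i}}$, Theorem~\ref{theo:caraAPEP} lets us pick $e_i\in\ext(B_{X_i^{**}})$ with $|\langle f^{(k)}_i,e_i-x_i/\|x_i\|\rangle|<\delta$ for all $k$. Define $\tilde e_i=\mu_i e_i$ for $i\in F$ and $\tilde e_i=0$ otherwise. Then $\tilde e\in S_{X^{**}}$ and each nonzero coordinate is a positive multiple of an extreme point of $B_{X_i^{**}}$, so by \eqref{eq:extreme_ell_p} we have $\tilde e\in\ext(B_{X^{**}})$.

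It remains to verify $\tilde e\in U$. Splitting
\[
\langle f^{(k)},\tilde e-(x_i)\rangle=\sum_{i\in F}\mu_i\langle f^{(k)}_i,e_i-x_i/\|x_i\|\rangle+\sum_{i\in F}(\mu_i-\|x_i\|)\langle f^{(k)}_i,x_i/\|x_i\|\rangle-\sum_{i\notin F}\langle f^{(k)}_i,x_i\rangle,
\]
the first sum is bounded by $\delta|F|$, the last by H\"older's inequality by $\delta\cdot 1=\delta$, and the middle one, using $\sum_{i\in F}|\mu_i-\|x_i\||^p=(1-s)^p$, by $(1-s)\|f^{(k)}\|$, which is small. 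Choosing $\delta$ small enough (depending on $|F|$, which in turn depends on $\delta$ through the tail conditions; this indexing is the only delicate point) makes all three contributions less than $\varepsilon/3$, so $\tilde e\in U$, completing the proof.

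The main obstacle is the construction in (2)$\Rightarrow$(1): one must simultaneously truncate the potentially uncountable index set $I$, renormalise on the finite piece $F$ to land on the sphere (which forces a perturbation of the coordinate norms from $\|x_i\|$ to $\mu_i$), and approximate each surviving coordinate inside $\ext(B_{X_i^{**}})$ in the weak$^*$ sense, all while keeping the total error under control via H\"older's inequality. The interdependence of $F$ and $\delta$ requires the tail conditions to be chosen before the $|F|$-dependent bound on $\eta=\delta$, but this is a standard $\varepsilon/3$ bookkeeping once the scheme is set up.
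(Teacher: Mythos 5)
Your proof is correct in substance, but it takes a partly different route from the paper's, and one step needs repair. For (1)$\Rightarrow$(2) the paper simply writes $X=X_j\oplus_p Y$ and quotes Lemma~\ref{lemma:sumapnececondi}; your Tychonoff-subnet argument is essentially that lemma's proof redone for an arbitrary family, so it is the same idea in a self-contained form. For (2)$\Rightarrow$(1) the paper argues more softly: on the (bounded) bidual ball the weak$^*$ topology of $X^{**}=(\bigoplus_i X_i^{**})_p$ is the product of the coordinate weak$^*$ topologies, so one may take a product-type neighbourhood $\prod_i U_i$, pick $e_i\in\ext{B_{X_i^{**}}}$ with $\norm{x_i}e_i\in U_i$ whenever $x_i\neq 0$, and the element $y$ with coordinates $\norm{x_i}e_i$ already lies on the sphere, so no truncation or renormalisation is needed. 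Your argument replaces this topological identification by an explicit finite truncation plus H\"older estimates; it is more computational but avoids having to justify the product-topology claim, so both routes are legitimate.

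Two points in your (2)$\Rightarrow$(1) need fixing. First, you require a finite $F\subseteq J$ with $\bigl(\sum_{i\notin F}\|f^{(k)}_i\|^q\bigr)^{1/q}<\delta$ for all $k$; this may be impossible, since the functionals $f^{(k)}$ can carry most of their mass on coordinates where $x_i=0$ (e.g.\ $x=e_1$ and $f^{(1)}$ supported on the second coordinate, while $J=\set{1}$). Fortunately the condition is unnecessary: bound the tail term by H\"older as $\bigl|\sum_{i\notin F}\langle f^{(k)}_i,x_i\rangle\bigr|\leq \|f^{(k)}\|\bigl(\sum_{i\notin F}\norm{x_i}^p\bigr)^{1/p}<\|f^{(k)}\|\,\delta$, which uses only the $x$-tail condition, and that one is achievable with $F\subseteq J$. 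Second, the $\delta$--$F$ circularity you flag should be resolved by decoupling the parameters: choose $F$ (hence $s$) from the $x$-tail condition with a tolerance depending only on $\varepsilon$ and $\max_k\|f^{(k)}\|$ (this controls your second and third terms), and only afterwards choose the accuracy $\eta$ of the approximants $e_i$ with $\eta<\varepsilon/(3|F|)$; this is possible because the APEP property of $x_i/\norm{x_i}$ produces extreme points of $B_{X_i^{**}}$ in arbitrarily small weak$^*$ neighbourhoods, independently of the earlier choices. With these adjustments your argument is complete and yields the same conclusion as the paper's.
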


\begin{proof}
(1)$\Rightarrow$(2): Observe that given $i\in I$ we have 
$$X=X_i\oplus_p Y,$$
where $Y$ is $\ell_p$-sum of the family $\set{X_j:i\in I\setminus\{i\}}$ and the above identification is an isometric isomorphism. Now the result is a direct application of Lemma~\ref{lemma:sumapnececondi}.

(2)$\Rightarrow$(1): Let $U$ be a weak$^*$ neighbourhood of $(x_i)$ in $X^{**}$. We will show that $U$ intersects $\ext{B_{X^{**}}}$ and this will be enough by Theorem~\ref{theo:caraAPEP}. 
Since the weak$^*$ topology of $X^{**}$ is the product topology of the weak$^*$ topologies in $X_i^{**}$, we may assume that $U=\prod_{i\in I} U_i$ where $U_i$ is a weak$^*$ neighbourhood of $x_i$ (or $X_i^{**}$) for each $i\in I$. If $x_i\neq 0$ then, by assumption and Theorem~\ref{theo:caraAPEP}, there exists $e_i\in\ext{B_{X_i^{**}}}$ such that $\norm{x_i}e_i\in U_i$. Let $y=(y_i)\in X^{**}$ be defined by
$$
y_i = \begin{cases}
\norm{x_i}e_i &\text{, if }x_i\neq 0 \\
0 &\text{, if }x_i=0
\end{cases} .
$$
Note that $y\in U$ and
$$
\norm{y} = \pare{\sum_{i\in I}\norm{y_i}^p}^{1/p} = \pare{\sum_{i\in I}\norm{x_i}^p}^{1/p} = \norm{(x_i)} = 1
$$
since $(x_i)\in S_X$, so $y\in \ext{B_{X^{**}}}$ by \eqref{eq:extreme_ell_p}. This ends the proof.
\end{proof}

Of course, Proposition \ref{prop:APEPlpsumas} does not cover all APEPs of the unit ball of $\ell_p$-sums as it is possible to have APEPs of norm strictly less than $1$. For instance, the space $\ell_p$ can be expressed as the $\ell_p$-sum of countably many copies of $\R$, and $\ape{B_{\ell_p}}=B_{\ell_p}$ for $1<p<\infty$ by Proposition \ref{prop:everypointispreserved}.

The analysis of APEPs in an infinite $\ell_\infty$-sum of Banach spaces becomes troublesome because we do not have a simple description of its dual, let alone its bidual. So, instead, we conclude with the case of $c_0$-sums of Banach spaces. In this case, we can characterise APEPs of the unit ball completely.

\begin{proposition}\label{prop:c0sumsAPEP}
Let $X$ be the $c_0$-sum of a family $\set{X_i:i\in I}$ of Banach spaces. Then $x=(x_i)$ is an APEP of $B_X$ if and only if $x_i$ is an APEP of $B_{X_i}$ for each $i\in I$.
\end{proposition}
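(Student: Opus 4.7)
The plan is to apply Theorem~\ref{theo:caraAPEP} under the standard bidual identification $X^{**}=(\bigoplus_{i\in I}X_i^{**})_{\ell_\infty}$. Combined with the well-known description
\[ \ext{B_{X^{**}}}=\set{(e_i)\,:\,e_i\in\ext{B_{X_i^{**}}}\text{ for every }i\in I} \]
of extreme points of $\ell_\infty$-sums (each $\|e_i\|=1$, with the splitting of $(y+z)/2$ being forced coordinatewise), the task reduces to showing that $x=(x_i)\in B_X$ lies in $\wscl{\ext{B_{X^{**}}}}$ (closure taken in $X^{**}$) if and only if each $x_i$ lies in $\wscl{\ext{B_{X_i^{**}}}}$ (closure taken in $X_i^{**}$).

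For the forward implication I would fix $j\in I$ and a weak$^*$-open neighbourhood $V$ of $x_j$ in $X_j^{**}$, and use that the canonical coordinate projection $\pi_j\colon X^{**}\to X_j^{**}$ is weak$^*$-to-weak$^*$ continuous because any $\varphi\in X_j^*$ extends to $X^*=(\bigoplus_iX_i^*)_{\ell_1}$ by padding with zeros. Then $\pi_j^{-1}(V)$ is a weak$^*$-open neighbourhood of $x$ in $X^{**}$, so Theorem~\ref{theo:caraAPEP} produces some $(e_i)\in\ext{B_{X^{**}}}\cap\pi_j^{-1}(V)$. The $j$-th coordinate $e_j$ sits in $\ext{B_{X_j^{**}}}\cap V$, and a second application of Theorem~\ref{theo:caraAPEP} delivers $x_j\in\ape{B_{X_j}}$.

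For the backward direction I would start with a basic weak$^*$-neighbourhood $U=\set{y\in X^{**}:\abs{\langle\varphi^k,y-x\rangle}<\varepsilon,\,k=1,\dots,n}$ of $x$ with $\varphi^1,\dots,\varphi^n\in X^*$. The key is that each $\varphi^k=(\varphi^k_i)$ lies in the $\ell_1$-sum, so a finite set $F\subseteq I$ can be chosen with $\sum_{i\notin F}\norm{\varphi^k_i}<\varepsilon/4$ for every $k$. For each $i\in F$, the APEP hypothesis on $x_i$ and Theorem~\ref{theo:caraAPEP} yield $e_i\in\ext{B_{X_i^{**}}}$ with $\abs{\langle\varphi^k_i,e_i-x_i\rangle}<\varepsilon/(2|F|)$ for every $k$; for $i\notin F$ I pick any $e_i\in\ext{B_{X_i^{**}}}$, available by Krein-Milman applied to the weak$^*$-compact ball $B_{X_i^{**}}$. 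Gluing the coordinates produces $y=(e_i)\in\ext{B_{X^{**}}}$, and splitting $\langle\varphi^k,y-x\rangle$ into its $F$-part (of modulus at most $\varepsilon/2$) and its tail (bounded by $2\sum_{i\notin F}\norm{\varphi^k_i}<\varepsilon/2$, using $\norm{e_i},\norm{x_i}\leq 1$) places $y$ inside $U$. Theorem~\ref{theo:caraAPEP} then gives $x\in\ape{B_X}$.

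The main obstacle is the mismatch between the weak$^*$-topology of $X^{**}$, which is strictly weaker than the product of the coordinate weak$^*$-topologies when $I$ is infinite, and the coordinatewise nature of the APEP hypothesis. The $\ell_1$-summability of the dual functionals is what reconciles the two: only finitely many coordinates need to be approximated carefully, and on the remaining tail the crude estimate $\norm{e_i},\norm{x_i}\leq 1$ is already enough. This is why the analogous statement for $\ell_\infty$-sums is much more delicate (there is no tail control on $X^*$), whereas the $c_0$ case admits this clean coordinatewise characterisation.
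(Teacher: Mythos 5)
Your proof is correct and follows essentially the same route as the paper: identify $X^{**}$ with the $\ell_\infty$-sum of the spaces $X_i^{**}$, use the coordinatewise description of $\ext{B_{X^{**}}}$, and apply Theorem~\ref{theo:caraAPEP}, with your $\ell_1$-tail estimate simply spelling out the paper's one-line claim that the weak$^*$ topology agrees with the product of the coordinate weak$^*$ topologies on bounded sets. Only your closing aside has the comparison reversed: on all of $X^{**}$ the weak$^*$ topology is \emph{finer}, not weaker, than that product topology (they coincide on bounded sets), but this does not affect your argument, which only uses the two correct directions.
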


\begin{proof}
The space $X^{**}$ can be identified with the $\ell_\infty$-sum of the spaces $X_i^{**}$ and, therefore, the extreme points of $B_{X^{**}}$ are precisely the elements of the form $(x_i^{**})$ where $x_i^{**}\in\ext{B_{X_i^{**}}}$ for all $i\in I$. Since the weak$^*$ topology of $X^{**}$ is the product topology of the weak$^*$ topologies in $X_i^{**}$, the result is now clear from Theorem~\ref{theo:caraAPEP}.
\end{proof}

\begin{example}\label{exam:c0(I)}
As consequence of Proposition~\ref{prop:c0sumsAPEP}, the unit ball of $c_0(I)$ does not have any APEP.
\end{example}

\medskip
\section{A characterisation of the RNP in terms of APEP}\label{section:charnp}

In this section we aim to prove a strong connection between the notion of APEP and the RNP. On the one hand, we aim to prove that a Banach space $X$ has the RNP if, and only if, every bounded, closed and convex subset of $X$ has an APEP. On the other hand, we will prove that a Banach space $X$ has the RNP if, and only if, the unit ball of every equivalent renorming of $X$ has an APEP.

Let us start with the first of our objectives. The result we are going to prove is the following.

\begin{theorem}\label{theo:cararnpapepsubconjuntos}
Let $X$ be a Banach space. The following assertions are equivalent:
\begin{enumerate}
    \item $X$ has the RNP.
    \item Every closed, convex and bounded subset $C$ of $X$ has an APEP.
    \item Every closed, convex and bounded subset $C$ of $X$ satisfies
    $$\dist\left(\overline{\ext{\overline{C}^{w^*}}}^{w^*} ,X\right)=0.$$
\end{enumerate}
\end{theorem}

In order to prove the previous result, we need a bit of notation. 
We say that a bounded subset $A \subseteq X^*$ is 
{\it relatively  convexly resolvable with respect to a set} $B \subseteq X^*$ if
there exists a decreasing transfinite sequence of convex and weak* closed sets $(D_\alpha)_{\alpha \leq \kappa}$, 
with $D_\alpha = \bigcap_{\beta <\alpha} D_\beta$ in case $\alpha$ is a limit ordinal, and 
a subset $I \subseteq [1, \kappa]$ such that 
$$ A \subseteq \bigcup_{\alpha \in I} \left( D_{\alpha} \setminus D_{\alpha+1} \right),$$
and 
$$ B \cap \bigcup_{\alpha \in I} \left( D_{\alpha} \setminus D_{\alpha+1} \right) =\emptyset,$$
setting $D_{\kappa+1}=\emptyset$ if necessary.
Obviously, it is compulsory that $A \cap B=\emptyset$. Note that the definition of resolvable set, see \cite[Appendix A.5]{repre}  for instance, can be recovered with 
$B=X^* \setminus A$ and dropping the convexity hypothesis. Typically, we will consider convex resolvability with respect to certain families of weak$^*$ compact subsets of $X^*$.
We say that a subset of a Banach space $A \subseteq X$ is convexly resolvable (relatively with respect to some family of subsets) if it is so as considered in $X^{**}$.\\

Recall that the average range of a vector measure $\nu \colon \Sigma \rightarrow X$ with respect to a positive measure 
$\mu$ on the same measurable space $(\Omega,\Sigma)$ is the set 
$$ \left\{ \frac{\nu(A)}{\mu(A)}: A \in \Sigma, \mu(A) \not = 0 \right\} \subseteq X. $$

\begin{proposition}\label{prop:convexresolimplyrnp}
Let $C \subseteq X$ be a bounded closed convex set. Assume moreover that, either:
\begin{enumerate}
\item $C$ is convexly resolvable with respect to any weak$^*$ compact subset of $X^{**}$ or;
\item $X$ is separable and $C$ is convexly resolvable with respect to any weak$^*$ compact subset $K \subseteq X^{**}$
such that $\dist(K,X)>0$.
\end{enumerate}
Then $C$ has the RNP.
\end{proposition}

The following proof will based on the theory of liftings, see \cite{Tulcea}. Given a measure space $(\Omega,\Sigma,\mu)$ denote by
${\mathcal {L}}^{\infty }(\mu)$ the set of real-valued bounded measurable functions equipped with the essential supremum seminorm and denote by $L^{\infty }(\mu )$ the Banach space obtained by identifying the functions that agree almost everywhere.
A lifting on a measure space $(\Omega,\Sigma,\mu)$ is a linear and multiplicative operator
$ \rho: {\mathcal {L}}^{\infty }(\mu)\to {\mathcal {L}}^{\infty }(\mu)$ such that 
$\rho(f)=\rho(g)$ if and only $f(\omega)=g(\omega)$ for almost all $\omega \in \Omega$. Liftings of 
${\mathcal {L}}^{\infty }(\mu)$ are, somehow, right inverses to the canonical quotient map from 
${\mathcal {L}}^{\infty }(\mu )$ to $L^{\infty }(\mu)$.
The existence of a lifting requires some technical assumptions on the measure space that are fulfilled by the Lebesgue measure, which is enough for our characterization of the RNP.

\begin{proof}
The following argument is essentially developed in \cite{raja}. Without loss of generality we may assume $C \subseteq B_X$.
Firstly, recall an old result of Tortrat, see \cite{Schwartz}, saying that $X$ is universally measurable in $(X^{**}, w^*)$, that is, that $X$ is $\mu$-measurable for every finite Radon measure $\mu$ on $X^{**}$. This implies that $C$ is universally measurable in $(X^{**}, w^*)$  as well.
Let $(\Omega,\Sigma,\mu)$ a probability space and let $\nu \colon \Sigma \rightarrow X$ be a $\mu$-continuous vector measure with
average range in $C$.
We claim that there exists a weak*-Borel measurable density $f \colon \Omega \rightarrow X^{**}$, that
is, $\langle \nu(A), x^* \rangle= \int_{A} \langle f, x^*
\rangle \, d\mu$ for every $A \in \Sigma$ and $x^*\in X^*$, which has a clear barycentric interpretation.

Indeed, for any $x^* \in X^*$, the signed measure $\langle \nu, x^* \rangle$ is
$\mu$-continuous, so it has a Radon-Nikod\'ym derivative
$f_{x^*} \in L^{1}(\mu)$, actually $f_{x^*} \in L^{\infty}(\mu)$ by the boundedness of average ranges.
Let $\rho$ be a lifting of ${\mathcal L}^{\infty}(\mu)$.
It is easy to check
that the map $x^* \to \rho(f_{x^*})(\omega)$ is linear
for every $\omega \in \Omega$ and bounded by $\|x^*\|$, so
there is $x_{\omega}^{**} \in B_{X^{**}}$ such that
$x_{\omega}^{**}(x^*)=\rho(f_{x^*})(\omega)$.
Clearly, the map
defined by $f(\omega)=x_{\omega}^{**}$ is weak*-scalarly
measurable, so it is weak*-Baire measurable by
\cite[Theorem~2.3]{Edgar1}.
Recall that the measure image, also known as the pushforward measure, is defined as $\mu\circ f^{-1}(A)=\mu(f^{-1}(A))$ for $A$ a weak*-Baire set.
The purpose of the measure $\mu\circ f^{-1}$ is to replace the vector valued measure $\nu$ with a scalar measure actually defined on $X^{**}$. The values of $\nu$ can be interpreted as barycenters of restrictions of 
$\mu\circ f^{-1}$.
We claim that $f$ is also weak*-Borel measurable and $\mu\circ f^{-1}$ is weak*-Radon. 

For that, we need to recall the abstract lifting considered in \cite[\S2]{Bellow}. Let $K$ be a compact Hausdorff space and denote by ${\mathcal L}^0(\mu, K)$ the set of $\Sigma$-Baire measurable functions from $\Omega$ to $K$. A  ``scalar'' lifting $\rho$ induces an abstract lifting on ${\mathcal L}^0(\mu, K)$ in the following way: given $\omega \in \Omega$ and 
$f \in {\mathcal L}^0(\mu, K)$, the mapping defined for $h \in C(K)$ by
$$  h \to \rho(h \circ f )(\omega)  $$
is a multiplicative linear functional (character) on $C(K)$. Therefore, exists a unique $x \in K$, depending on $g$ and $\omega$, such that 
$\rho(h \circ f )(\omega) = h(x)$ for all $h \in C(K)$. Thus, we define $\rho_K: {\mathcal L}^0(\mu, K) 
\to {\mathcal L}^0(\mu, K)$ as
$$ \rho_K(f)(\omega)=x .$$
In other words, the abstract lifting satisfies
$$ h \circ \rho_K(f)(\omega)  = \rho(h \circ f )(\omega) $$
for every $h \in C(K)$.
Now we will apply the abstract lifting construction with $K=B_{X^{**}}$, the previous lifting $\rho$ and our function 
$f: \Omega \to B_{X^{**}}$. 
The last equation together with the definition of $f$ gives 
$$ x^* \circ \rho_K(f) (\omega) = \rho(x^* \circ f)(\omega) = \rho( \rho(f_{x^*}))(\omega) = \rho(f_{x^*})(\omega) 
= x^* \circ f (\omega) .$$
for every $x^* \in X^*$.
Therefore $\rho_{K}(f)=f$. By \cite[Theorem~2.1]{Bellow}, $f=\rho_{K}(f)$ is weak*-Borel measurable and $\mu\circ f^{-1}$ is weak*-Radon, as desired.

From this moment on we will work under the assumption (1) of the statement.
Now note that we may assume, without loss of generality, that $f$ takes values in $\overline{C}^{w^*}$.
We will prove that, actually, $f$ has almost all of its values in $C$. 
Indeed, take any weak* compact subset $K \subseteq \overline{C}^{w^*} \setminus C$.
By the hypothesis, there is a
decreasing transfinite sequence of convex and weak* compact sets $(D_\alpha)_{\alpha \leq \kappa}$ and a subset 
$I \subseteq [1,\kappa]$ such that
$$ C \subseteq \bigcup_{\alpha \in I} \left( D_{\alpha} \setminus D_{\alpha+1} \right).$$
In particular, $\overline{C}^{w^*} \subseteq D_1$, thus $K \subseteq D_1$ and so 
$$ K \subseteq \bigcup_{\alpha \not \in I} \left( D_{\alpha} \setminus D_{\alpha+1} \right).$$

{\bf Claim: } $(\mu\circ f^{-1})(K)=0$.\\
Assume that $(\mu\circ f^{-1})(K)>0$. Since $\mu\circ f^{-1}$ is weak*-Radon, there would be a smallest $\alpha$ such that 
$$(\mu\circ f^{-1})(K \cap D_\alpha) = (\mu\circ f^{-1})(K), \mbox{and}$$
$$(\mu\circ f^{-1})(K \cap D_{\alpha+1}) < (\mu\circ f^{-1})(K).~~$$
This implies $(\mu\circ f^{-1}) (K \cap (D_\alpha \setminus D_{\alpha+1})) >0$. In particular, we have\linebreak 
$K \cap (D_\alpha \setminus D_{\alpha+1}) \not = \emptyset$, thus $C \cap (D_\alpha \setminus D_{\alpha+1}) = \emptyset$.
Take $S \subseteq K \cap (D_\alpha \setminus D_{\alpha+1})$ a weak* compact with $(\mu\circ f^{-1}) (S) >0 $ and such that $S$ supports $(\mu\circ f^{-1})|_S$ (Radon measures always have support). By the Hahn-Banach theorem there is a weak* open halfspace $H$ such that $\overline{H}^{w^*} \cap D_{\alpha+1} = \emptyset$ and 
$H \cap S \not = \emptyset$.
Since $S$ is a measure support, $(\mu\circ f^{-1}) (H \cap S) >0$. That would imply together that 
$(\mu\circ f^{-1})(D_\alpha \cap \overline{H}^{w^*})>0$,  and
$D_\alpha \cap \overline{H}^{w^*} \cap C = \emptyset$,
meaning that the average range of $\nu$ lies outside of $C$, which is a contradiction.

Let us assume that assumption (2) holds. The proof will follow the same lines as before with a small variation.
By hypothesis, the bounded closed convex set $C \subseteq X$ is convexly resolvable with respect to any 
$K \subseteq \overline{C}^{w^*} \setminus C$ such that $\dist(K,X)>0$. In particular, the above happens if 
$$ K \subseteq \overline{C}^{w^*} \setminus (X+B_{X^{**}}(0,\varepsilon)) .$$
However, the last set is $w^*$-Borel since 
$$ X+B_{X^{**}}(0,\varepsilon) = \bigcup_{n=1}^\infty B_{X^{**}}(x_n,\varepsilon) $$
where $(x_n) \subseteq X$ is a norm dense sequence. The previous claim shows that any such $K$ satisfies 
$(\mu\circ f^{-1})(K)=0$ and thus the measure is concentrated on
$$  \overline{C}^{w^*} \cap \bigcap_{n=1}^\infty (X+B_{X^{**}}(0,1/n)) = C. $$

Now we know that, under assumptions either (1) or (2), $f$ takes almost all of its values in $C$. Note that $\mu\circ f^{-1}$ is weak*-Radon and therefore weak-Radon in $X$. A classic result attributed by Talagrand to Phillips and Grothendieck \cite{Talagrand, Edgar1} says that 
$\mu\circ f^{-1}$ is the restriction of a Radon measure on $(X, \| \cdot \|)$. That implies that the range of $f$ is essentially separable. Since $f$ is scalarly measurable, we deduce that $f$ is Bochner measurable, and that concludes the proof that $C$ has the RNP.
\end{proof}

\begin{proposition}\label{prop:condisuficonresolva}
Let $C \subseteq X$ be a bounded closed convex set and let $K \subseteq X^{**}$ be a weak* compact. Then either:
\begin{enumerate}
\item $C$ is convexly resolvable with respect to $K$;
\item or there is a nonempty convex closed subset $B \subseteq C$ such that
$$ \ext{\overline{B}^{w^*}} \subseteq K .$$
\end{enumerate}
\end{proposition}

\begin{proof}
Assume that $ \ext{\overline{B}^{w^*}} \setminus K \not = \emptyset$ for every nonempty convex closed subset 
$B \subseteq C$. We will inductively build a sequence $(D_\alpha)_{\alpha \geq 1}$ witnessing the convex solvability of $C$ with respect to $K$. For that, we will also build a sequence of nonempty closed convex sets 
$(B_\alpha)_{\alpha \mbox{\tiny{\ odd}}}$ and a sequence of $w^*$-open halfspaces $(H_\alpha)_{\alpha \mbox{\tiny{\ odd}}}$ in $X^{**}$. 
Take $B_1 =C$. Assume that $\alpha$ is an odd ordinal such that $B_\alpha \not = \emptyset$ and set $D_\alpha=
\overline{B_\alpha}^{w^*}$. By our assumption, there is some $x \in \ext{D_\alpha} \setminus K$, we may take a weak* open halfspace $H_\alpha$ such that $x \in H_\alpha$ and $D_\alpha \cap H_\alpha \cap K =\emptyset$.
Then take $D_{\alpha+1}=D_\alpha \setminus H_\alpha$ and $B_{\alpha+2}= B_\alpha \setminus H_\alpha$.
For $\alpha$ a limit ordinal just take $D_\alpha = \bigcap_{\beta<\alpha} D_\beta$, that is nonempty if the previous sets were so, and $B_{\alpha+1}=D_\alpha \cap C$ (limits ordinals are even).
Continue the process while $B_\alpha \not = \emptyset$ and eventually $C$ will be exhausted. Therefore
$$ C \subseteq \bigcup_{\alpha \mbox{\tiny{\ is odd}}} (B_\alpha \cap H_\alpha) 
\subseteq \bigcup_{\alpha \mbox{\tiny{\ is odd}}} (D_\alpha \setminus D_{\alpha+1})$$
and this last set does not meet $K$ by construction. That proves the convex solvability of $C$ with respect to $K$.
\end{proof}

Now we can provide the pending proof.

\begin{proof}[Proof of Theorem~\ref{theo:cararnpapepsubconjuntos}]
(1)$\Rightarrow$(2): Every closed, convex and bounded subset $C$ of $X$ has a denting point since $X$ has the RNP, and denting points are APEPs.
(2)$\Rightarrow$(3):  By Theorem \ref{theo:caraAPEP} we have
$$ \emptyset \not = \ape{C} \subseteq X \cap \overline{\ext{\overline{C}^{w^*}}}^{w*} .$$
(3)$\Rightarrow$(1): Assume that $X$ does not have the RNP. Since the RNP is separably determined, there exists a separable subspace $Y \subseteq X$ lacking the RNP, and so its unit ball $B_Y$ lacks the RNP. By 
Proposition \ref{prop:convexresolimplyrnp}, there exists a $w^*$-compact $K \subseteq Y^{**}$ with $\dist(Y,K)>0$ such that  
$B_Y$ is not convexly resolvable with respect to $K$. Now, by Proposition~\ref{prop:condisuficonresolva}, there exists a nonempty convex closed set $B \subseteq B_Y$ such that 
$$ \ext{\overline{B}^{w^*}} \subseteq K ,$$
where the distance is computed inside $Y^{**}$. Having in mind the natural isometric embedding of $Y^{**}$ into $X^{**}$ and applying the fact that $$\dist(y^{**}, X) \geq (1/2) \dist(y^{**},Y)$$ 
for any $y^{**} \in Y^{**}$, see \cite[Lemma 2.3]{scsewe89} or \cite[Proposition 3.59]{BIO} for instance, we deduce that  
$$ \dist\left ( \overline{\ext{\overline{B}^{w^*}}}^{w^*} , X \right)\geq
\frac{1}{2}\dist\left ( \overline{\ext{\overline{B}^{w^*}}}^{w^*} , Y \right)>0 $$
as wished.
\end{proof}

Once we have accomplished one of the main aims of the section, we want to take advantage of Theorem~\ref{theo:cararnpapepsubconjuntos} to prove that, in any Banach space $X$ that fails the RNP, there exists an equivalent renorming whose unit ball has no APEP. That is what we do in the following theorem.

\begin{theorem}\label{theo:renormasinapepnornp}
Let $X$ be a Banach space failing the RNP. Then there exists an equivalent norm $\vert\cdot\vert$ on $X$ such that $\ape{B_{(X,\vert\cdot\vert)}}=\emptyset$.
\end{theorem}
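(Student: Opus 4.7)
The plan is to combine Theorem~\ref{theo:cararnpapepsubconjuntos} with renorming techniques in the spirit of \cite{scsewe89}. By Theorem~\ref{theo:cararnpapepsubconjuntos}, the failure of the RNP yields a closed, convex, bounded subset $C\subseteq X$ with $\ape{C}=\emptyset$; after an appropriate choice of $C$ together with a translation, I may moreover assume $0\in\operatorname{int}(C)$. The aim is to upgrade $C$ to the closed unit ball of an equivalent norm on $X$ while preserving the absence of APEPs.

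I would first form the symmetric convex body
\[
B:=\cconv\pare{C\cup(-C)} .
\]
Since $0\in\operatorname{int}(C)\subseteq\operatorname{int}(B)$, the Minkowski functional $|\cdot|$ of $B$ is an equivalent norm on $X$ whose closed unit ball is precisely $B$. The remaining task is to show $\ape{B}=\emptyset$, and by Theorem~\ref{theo:caraAPEP} this amounts to $B\cap\wscl{\ext{\wscl{B}}}=\emptyset$ in $X^{**}$.

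Since $\wscl{B}=\cconv^{w^*}\pare{\wscl{C}\cup(-\wscl{C})}$, Milman's theorem yields
\[
\ext{\wscl{B}}\subseteq\ext{\wscl{C}}\cup\pare{-\ext{\wscl{C}}},
\]
and hence
\[
B\cap\wscl{\ext{\wscl{B}}}\subseteq B\cap\pare{\wscl{\ext{\wscl{C}}}\cup\pare{-\wscl{\ext{\wscl{C}}}}} .
\]
If $C$ can be chosen so that $\wscl{\ext{\wscl{C}}}\cap X=\emptyset$ in $X^{**}$, then the last set is empty (as $B\subseteq X$) and the proof is finished.

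The main obstacle is producing $C$ with this strengthened property. The condition $\ape{C}=\emptyset$ given by Theorem~\ref{theo:cararnpapepsubconjuntos} only says that $\wscl{\ext{\wscl{C}}}\cap C=\emptyset$, and the quantitative estimate $\dist(\ext{\wscl{C}},X)>\tfrac{1}{2}-\varepsilon$ provided by \cite[Theorem~1.1]{scsewe89} is not preserved under $w^*$-limits, since norm distance to $X$ is not $w^*$-lower semicontinuous. To circumvent this, I would invoke the finer construction of \cite{scsewe89}, which produces in any non-RNP space a closed convex bounded set whose bidual extreme points remain $w^*$-separated from $X$; combined with the symmetrisation $B=\cconv(C\cup -C)$ above, this delivers an equivalent norm on $X$ with $\ape{B_{(X,|\cdot|)}}=\emptyset$.
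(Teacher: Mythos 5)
There is a genuine gap, and it is precisely at the point where you write ``after an appropriate choice of $C$ together with a translation, I may moreover assume $0\in\operatorname{int}(C)$.'' Theorem~\ref{theo:cararnpapepsubconjuntos} only produces \emph{some} closed, convex, bounded $C$ with $\ape{C}=\emptyset$, and such sets typically have empty interior (for instance, they may live inside a separable subspace of a nonseparable $X$; in the alternative proof they are of the form $\overline{T(\mathcal F_E)}$ for an operator $T\colon L_1[0,1]\to X$). A translation cannot create interior, so your Minkowski-functional step is not available: producing an APEP-free set that is moreover the unit ball of an equivalent norm is exactly the content of the theorem, not something you may assume. The paper bridges this gap by \emph{fattening}: after symmetrising (Lemma~\ref{lemma:symmetrizacion}, whose content is the inclusion $\ext{\wscl{\cconv(C\cup -C)}}\subseteq\ext{\wscl{C}}\cup\pare{-\ext{\wscl{C}}}$ that you also use --- note this does not follow from Milman's theorem, which only gives containment in $\wscl{C}\cup(-\wscl{C})$, but it does follow from the explicit description of $\co(D\cup -D)$), one takes $B:=C+\varepsilon B_X$ with $0<\varepsilon<\alpha:=\dist\bigl(\wscl{\ext{\wscl{C}}},X\bigr)$. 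Then $B$ has nonempty interior, and $\ext{\wscl{B}}\subseteq\ext{\wscl{C}}+\varepsilon\ext{B_{X^{**}}}$ stays at distance at least $\alpha-\varepsilon>0$ from $X$, so $\ape{B}=\emptyset$ by Theorem~\ref{theo:caraAPEP}. This fattening step, and the positivity of $\alpha$ that makes it work (Lemma~\ref{lemma:distaposicierreapep}), is the idea missing from your argument.

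On the other hand, the ``main obstacle'' you identify is not actually an obstacle, so your appeal to an unspecified ``finer construction'' from \cite{scsewe89} is unnecessary. Since $\wscl{\ext{\wscl{C}}}\subseteq\wscl{C}$ and $\wscl{C}\cap X=C$ (as $C$ is closed, convex and bounded, hence weakly closed), the condition $\ape{C}=\emptyset$ already gives $\wscl{\ext{\wscl{C}}}\cap X=\emptyset$ for free, not merely $\wscl{\ext{\wscl{C}}}\cap C=\emptyset$. What one really needs beyond this, in order to run the fattening argument, is the strictly positive \emph{distance} from $\wscl{\ext{\wscl{C}}}$ to $X$; and this too follows from $\ape{C}=\emptyset$ alone, by a $w^*$-compactness and Hahn--Banach argument (take a $w^*$-cluster point of a hypothetical sequence of almost-touching pairs and separate it from $X$), which is exactly Lemma~\ref{lemma:distaposicierreapep}. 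So the correct repair of your proof is not a stronger set from \cite{scsewe89}, but the addition of $\varepsilon B_X$ to the symmetrised set, justified by that distance lemma.
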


\begin{proof}
Since $X$ fails the RNP, by Theorem~\ref{theo:cararnpapepsubconjuntos} there exists a closed, convex and bounded subset $C\subseteq X$ and a $w^*$-compact subset $K \subseteq X^{**}$ with $\dist(K,X) =\eta>0$ such that
$$ \ext{\overline{C}^{w^*}} \subseteq K .$$
Take $D = \cco(C \cup (-C))$ and note that 
$$ \ext{\overline{D}^{w^*}} \subseteq \ext{\overline{C}^{w^*}} \cup \left(- \ext{\overline{C}^{w^*}} \right) 
\subseteq H:= K \cup (-K) .$$
Note that $\dist(H,X)=\eta$ and thus $\dist(H+\delta B_{X^{**}},X)=\eta-\delta$ provided that $\delta \leq \eta$. 
Take $B=\cco(D+B_X(0,\eta/2))$. We have
$$  \ext{\overline{B}^{w^*}} \subseteq  \ext{\overline{D}^{w^*}} + (\eta/2)B_{X^{**}} \subseteq H+ (\eta/2) B_{X^{**}},$$
and so
$$ \dist\left (\ext{\overline{B}^{w^*}} , X\right) \geq \eta/2 >0. $$
That implies that $\ape{B}=\emptyset$ by Theorem~\ref{theo:caraAPEP}. Consequently, the equivalent norm whose unit ball is $B$ does the trick.
\end{proof}

As a consequence of Theorem~\ref{theo:renormasinapepnornp} we are now able to obtain the following characterisation of the RNP.

\begin{corollary}\label{cor:cararnpequireno}
Let $X$ be a Banach space. The following assertions are equivalent:
\begin{enumerate}
    \item $X$ has the RNP.
    \item The unit ball of any equivalent renorming of $X$ has an APEP.
\end{enumerate}
\end{corollary}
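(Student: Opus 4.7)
The proof will be a direct combination of the two previous theorems, so there is essentially no new work to carry out. I would organise it as a short argument showing both implications via the already established results.

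For the implication (1) $\Rightarrow$ (2), the key observation is that the RNP is an isomorphic invariant: any equivalent renorming $(X,|\cdot|)$ of a Banach space $X$ with the RNP still has the RNP. Consequently, its unit ball $B_{(X,|\cdot|)}$ is a closed, convex and bounded subset of a Banach space with the RNP, and Theorem~\ref{theo:cararnpapepsubconjuntos} immediately provides an APEP in it. In fact one gets more, since under the RNP every such set even has denting points, but we only need the APEP conclusion.

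For the converse (2) $\Rightarrow$ (1), I would argue by contrapositive. Assume $X$ fails the RNP. Then Theorem~\ref{theo:renormasinapepnornp} yields an equivalent norm $|\cdot|$ on $X$ whose unit ball $B_{(X,|\cdot|)}$ satisfies $\ape{B_{(X,|\cdot|)}} = \emptyset$, which directly contradicts (2). Hence (2) forces $X$ to have the RNP.

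There is no substantive obstacle here: all the deep content has been pushed into Theorems~\ref{theo:cararnpapepsubconjuntos} and~\ref{theo:renormasinapepnornp}. The only point worth emphasising in the write-up is the contrast with Corollary~\ref{coro:renorseparable}, which shows that the quantifier over \emph{all} equivalent renormings is necessary: for a separable space without the RNP, one can always renorm it so that \emph{some} equivalent ball has APEPs (in fact all its points are APEPs via LUR), but Theorem~\ref{theo:renormasinapepnornp} guarantees that one can also renorm it so that its ball has none.
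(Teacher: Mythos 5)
Your proposal is correct and is exactly the argument intended in the paper: (1)$\Rightarrow$(2) because the RNP is isomorphically invariant, so the unit ball of any equivalent norm is a closed bounded convex set in a space with the RNP and Theorem~\ref{theo:cararnpapepsubconjuntos} (or the existence of denting points) gives an APEP, while (2)$\Rightarrow$(1) is the contrapositive of Theorem~\ref{theo:renormasinapepnornp}. No gaps to report.
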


\section{Lipschitz-free spaces}\label{section:Lipschitzfree}

In this section, we will study APEPs of the unit ball of Lipschitz-free spaces and relate them to extremal structure. We refer the reader to Section~\ref{subsection:Lipschitz-free} for notation and basic facts. Throughout the section, $(M,d)$ will denote a complete metric space with base point $0\in M$.

We begin by vastly reducing our list of suspects:

\begin{proposition}\label{pr:apep_molecule_0}
Every APEP of $B_{\F{M}}$ is either a molecule or $0$.
\end{proposition}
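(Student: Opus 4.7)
The proposed proof is essentially a one-line consequence of the general machinery already developed. The plan is to apply the Milman-type result for APEPs (Lemma~\ref{lemma:milman_apep}) with the natural generating set for $B_{\F{M}}$.

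Specifically, I would proceed as follows. Since $B_{\F{M}} = \cconv(\Mol{M})$ by the Hahn--Banach separation theorem (recalled in Subsection~\ref{subsection:Lipschitz-free}), Lemma~\ref{lemma:milman_apep} applied to $C = B_{\F{M}}$ and $B = \Mol{M}$ yields the containment
\[
\ape{B_{\F{M}}} \subseteq \wcl{\Mol{M}}.
\]
Then I would invoke the known description of the weak closure of the set of molecules (cited from \cite[Proposition 2.9]{gppr18} and \cite[Lemma 4.2]{GRZ} in the excerpt), according to which
\[
\wcl{\Mol{M}} \subseteq \Mol{M} \cup \set{0},
\]
with equality precisely when $M$ does not bi-Lipschitz embed into any finite-dimensional Banach space. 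Combining the two inclusions gives the desired conclusion that every APEP of $B_{\F{M}}$ is either an elementary molecule or $0$.

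There is no real obstacle: the work has been front-loaded into Lemma~\ref{lemma:milman_apep} and into the structural result on the weak closure of $\Mol{M}$. The only minor point to note is that the statement does not claim $0$ is actually attained as an APEP -- this will depend on whether $0$ even lies in $\wcl{\Mol{M}}$, a question that presumably is addressed in subsequent results of the section characterising exactly which molecules and whether the origin are APEPs.
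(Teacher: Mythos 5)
Your proposal is correct and coincides with the paper's own argument: apply Lemma~\ref{lemma:milman_apep} with $C=B_{\F{M}}$ and $B=\Mol{M}$, then use the cited fact that $\wcl{\Mol{M}}\subseteq\Mol{M}\cup\set{0}$. Nothing is missing.
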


\begin{proof}
Since $B_{\F{M}}=\cconv(\Mol{M})$, Lemma \ref{lemma:milman_apep} implies that any APEP of $B_{\F{M}}$ must belong to $\wcl{\Mol{M}}$. However, $\wcl{\Mol{M}}\subseteq\Mol{M}\cup\set{0}$ by \cite[Proposition 2.9]{gppr18}.
\end{proof}

By \cite[Lemma 4.2]{GRZ}, $\Mol{M}$ is weakly closed if and only if $M$ bi-Lipschitz embeds into $\R^n$ for some $n$. If that is the case, the argument above shows that any APEP of $B_{\F{M}}$ must be a molecule. If $M$ does not bi-Lipschitz embed into Euclidean space, then $0$ can be or fail to be an APEP of $B_{\F{M}}$, as witnessed by the following examples.

\begin{example}\label{example:0apepfree}
Let $X$ be any infinite-dimensional uniformly convex Banach space and $M:=S_X$. Then $M$ is uniformly concave and therefore every molecule in $\F{M}$ is a preserved extreme point of $B_{\F{M}}$ \cite[Theorem~3.39]{Weaver}. Clearly, $M$ does not bi-Lipschitz embed into Euclidean space, as it is not even locally compact. Thus $0$ is an APEP of $B_{\F{M}}$ by Theorem \ref{theo:caraAPEP} and \cite[Lemma 4.2]{GRZ}. 
\end{example}

\begin{example}\label{example:no0apepfree}
Let $M:=\set{0}\cup\set{e_n:n\in\N}\subseteq\ell_1$. Clearly, $M$ does not bi-Lipschitz embed into Euclidean space as it is not totally bounded. However, $\F{M}$ is linearly isometric to $\ell_1$ (see e.g. \cite[Example 3.10]{Weaver}) and therefore $0$ is not an APEP of $B_{\F{M}}$ by Proposition \ref{prop:l1purelyatomic}.
\end{example}

We do not know of a precise metric condition characterising when $0$ is an APEP of $B_{\F{M}}$. But we are able to characterise those molecules that are APEPs as follows.

\begin{theorem}\label{th:apep_molecules}
Let $m_{xy}\in\F{M}$ be a molecule. Then the following are equivalent:
\begin{enumerate}
\item $m_{xy}\in\ape{B_{\F{M}}}$,
\item $m_{xy}\in\cl{\dent{B_{\F{M}}}}$,
\item there exist $m_{x_ny_n}\in\dent{B_{\F{M}}}$ such that $x_n\to x$ and $y_n\to y$.
\end{enumerate}
\end{theorem}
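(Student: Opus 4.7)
The plan is to prove the cycle $(3) \Rightarrow (2) \Rightarrow (1) \Rightarrow (3)$. The first two implications are straightforward: for $(3)\Rightarrow(2)$, since $d(x_n, y_n) \to d(x,y) > 0$ a direct norm estimate gives $m_{x_n y_n} \to m_{xy}$ in norm, so $m_{xy}\in\overline{\dent{B_{\F{M}}}}$. For $(2)\Rightarrow(1)$, denting points are preserved extreme points and hence APEPs, and $\ape{B_{\F{M}}}$ is norm closed by Lemma \ref{lemma:apep_w_closed}.

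For the main implication $(1)\Rightarrow(3)$, I would apply Lemma \ref{lemma:apepsmallslices} with $B=\Mol{M}$ and $C=B_{\F{M}}=\cconv(\Mol{M})$. Alternative (1) cannot occur: since norm and weak topologies coincide on $\Mol{M}$ (cf.\ \cite[Lemma~2.2]{gppr18}) and $m_{xy}\in\Mol{M}$, any net in $\Mol{M}$ converging weakly to $m_{xy}$ converges in norm, contradicting the requirement $\|y_\alpha - m_{xy}\|\geq\varepsilon$. Hence alternative (2) holds: every weak neighborhood of $m_{xy}$ contains slices of arbitrarily small diameter. Fixing a decreasing weak neighborhood basis $(W_n)$ of $m_{xy}$, I would choose $S_n = S(B_{\F{M}},f_n,\alpha_n) \subseteq W_n$ with $\diam(S_n) < 1/n$ and pick a molecule $m_{x_n y_n} \in S_n$. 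Then $m_{x_n y_n}\to m_{xy}$ weakly, hence in norm, so $x_n \to x$ and $y_n \to y$ by the metric characterization of molecule convergence.

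It remains to verify that $m_{x_n y_n}$ is denting for large $n$. I would argue by contradiction using the characterization \cite[Theorem~4.1]{AliagaGuirao}: if $m_{x_n y_n}$ is not denting, there exist $\varepsilon_n > 0$ and $p_n \in M$ with $d(x_n, p_n), d(y_n, p_n) > \varepsilon_n$ and $d(x_n, p_n) + d(p_n, y_n)$ arbitrarily close to $d(x_n, y_n)$. The linear identity $d(x_n,y_n)\,m_{x_n y_n} = d(x_n,p_n)\,m_{x_n p_n} + d(p_n, y_n)\,m_{p_n y_n}$ combined with $f_n(m_{x_n y_n}) > 1 - \alpha_n$ forces the bounds $f_n(m_{x_n p_n}), f_n(m_{p_n y_n}) \geq 1 - \alpha_n d(x_n,y_n)/\varepsilon_n - o(1)$. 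Meanwhile, testing the difference against the $1$-Lipschitz function $w\mapsto d(0, p_n) - d(w, p_n)$, which yields value $-1$ on $m_{x_n p_n}$ and $+1$ on $m_{p_n y_n}$, gives $\|m_{x_n p_n} - m_{p_n y_n}\| = 2$. When $\varepsilon_n \geq d(x_n, y_n)$ the two witness molecules already lie in $S_n$, directly contradicting $\diam(S_n) < 1/n$.

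The principal obstacle is the complementary case $\varepsilon_n < d(x_n, y_n)$, where the two witness molecules only lie in a strictly larger slice $S(B_{\F{M}}, f_n, \alpha_n')$ with $\alpha_n' = \alpha_n d(x_n, y_n)/\varepsilon_n > \alpha_n$, so the naive contradiction fails. I expect to overcome this either by iteratively shrinking $\alpha_n$ at the outset (using the freedom granted by Lemma \ref{lemma:apepsmallslices} to further reduce the slice parameter while preserving small diameter and containment of some molecule with $f_n$-value arbitrarily close to the supremum) or by bypassing the metric characterization entirely: Theorem \ref{theo:caraAPEP} places $m_{xy}$ in the weak-$*$ closure of $\ext{B_{\Lip(M)^*}}$, and the structural description of extreme points of $B_{\Lip(M)^*}$ for complete metric spaces, together with the identification of $\ext{B_{\Lip(M)^*}}\cap\F{M}$ with denting molecules, should allow one to extract a net of denting molecules weakly converging to $m_{xy}$, which by weak $=$ norm on $\Mol{M}$ yields the desired sequence $(x_n,y_n)\to(x,y)$.
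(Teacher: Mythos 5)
Your treatment of (3)$\Rightarrow$(2)$\Rightarrow$(1) and the first half of (1)$\Rightarrow$(3) is correct and coincides with the paper's argument: apply Lemma \ref{lemma:apepsmallslices} with $B=\Mol{M}$, rule out alternative 1) because weak and norm convergence agree on molecules, and conclude that every weak neighbourhood of $m_{xy}$ contains slices of arbitrarily small diameter. The genuine gap is exactly where you flag it: extracting a \emph{denting} molecule from such a small slice. Your contradiction argument via the metric characterisation of denting points only places the two witness molecules $m_{x_np_n}$, $m_{p_ny_n}$ in the enlarged slice $S(B_{\F{M}},f_n,\alpha_n d(x_n,y_n)/\varepsilon_n)$, and since $\varepsilon_n$ is produced by the failure of dentability of the molecule you picked \emph{inside} $S_n$, it is not under your control; in particular the proposed fix of ``shrinking $\alpha_n$ at the outset'' is circular, because the required smallness of $\alpha_n$ depends on $\varepsilon_n$, which is only revealed after the slice (hence $\alpha_n$) has been fixed. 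Your second fallback is also unsubstantiated: there is no available ``structural description'' of $\ext{B_{\Lip(M)^*}}=\ext{B_{\F{M}^{**}}}$, and knowing that $m_{xy}$ is a weak$^*$ limit of extreme points of the bidual ball (Theorem \ref{theo:caraAPEP}) does not let you replace that net by extreme points lying in $\F{M}$ (i.e.\ by denting points); that is precisely the nontrivial content you would need to prove.

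The paper closes this gap with a different ingredient: by Remark \ref{remark:slicelocaldiam}, any non-empty slice determined by a \emph{local} function $f\in S_{\Lip(M)}$ has diameter $2$, so a slice of diameter less than $2$ inside $W$ is necessarily given by a non-local $f$; then \cite[Proposition 2.7]{vee25} guarantees that a slice given by a non-local function contains a denting point, hence $W$ contains a denting point, and since denting points are molecules and weak equals norm convergence on $\Mol{M}$, one gets statement (2) (equivalently (3)). Without this local/non-local dichotomy (or some equivalent device), the step from ``small-diameter slices'' to ``denting points nearby'' is not justified, so as written your proof of (1)$\Rightarrow$(3) is incomplete.
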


\begin{proof}
The equivalence (2)$\Leftrightarrow$(3) follows from the fact that all denting points of $B_{\F{M}}$ are molecules and that norm convergence of molecules translates to convergence of the underlying pair of points. On the other hand, (2) is equivalent to $m_{xy}\in\wcl{\dent{B_{\F{M}}}}$ by \cite[Lemma 2.2]{gppr18}, and this clearly implies (1) by Lemma \ref{lemma:apep_w_closed}. So it only remains to be proved that (1) implies $m_{xy}\in\wcl{\dent{B_{\F{M}}}}$.

Let $m_{xy}$ be an APEP of $B_{\F{M}}$ and let $W$ be a weak neighbourhood of $m_{xy}$. We will show that $W$ contains a denting point of $B_{\F{M}}$ and this will finish the proof. Apply Lemma \ref{lemma:apepsmallslices} to $m_{xy}$, with $B=\Mol{M}$. Since norm and weak convergence of molecules agree, the situation in option 1) of Lemma \ref{lemma:apepsmallslices} is impossible, so we deduce that $W$ contains slices of arbitrarily small diameter.  It follows from Remark~\ref{remark:slicelocaldiam} that $W$ contains a slice $S=S(B_{\F{M}},f,\alpha)$ where $f$ is not local. By \cite[Proposition 2.7]{vee25} $S$, and thus $W$, contains a denting point of $B_{\F{M}}$. This ends the proof.
\end{proof}

We will now use Theorem \ref{th:apep_molecules} to provide further examples of the interplay between APEPs and extremality. In Section \ref{section:firstresults}, we showed that APEPs are not necessarily extreme points. This can also happen to molecules in Lipschitz-free spaces.

\begin{example}\label{ex:lipfree_apep_not_extreme}
Let $M = \set{0,1}\times [0,1] \cup \set{(\tfrac{1}{2},0)} \subseteq \R^2$.
Then each $m_n:=m_{(0,\frac{1}{n}),(1,\frac{1}{n})}$ is an extreme point of $B_{\F{M}}$; in fact, it is preserved because $M$ is compact \cite[Theorem 4.2]{AliagaGuirao}. Moreover, $m_n$ converges to $m:=m_{(0,0),(1,0)}$, so $m$ is APEP by Theorem \ref{th:apep_molecules}. But $m$ is not extreme because the segment between $(0,0)$ and $(1,0)$ is not trivial.
\end{example}

Observe that, up to this point, we have not obtained any example of a (non-preserved) extreme point which fails to be an APEP. The next example shows that this is indeed possible.

\begin{example}\label{exam:extremenoapepfree}
Let $M$ be a uniformly discrete metric space such that there exists an extreme point $m_{xy}\in\ext{B_{\F{M}}}$ that is not preserved (such examples exist, e.g. \cite[Example 4.3]{AliagaGuirao}). We claim that $m_{xy}$ is not an APEP.
Indeed, by Theorem \ref{th:apep_molecules}, $m_{xy}$ can only be an APEP if there exist preserved extreme points $m_{x_ny_n}$ such that $x_n\to x$ and $y_n\to y$ but, since $M$ is topologically discrete, this implies that $x_n=x$, $y_n=y$ for $n$ large enough, and thus $m_{xy}$ is preserved.
\end{example}

It is also possible to construct a metric space $M$ such that $B_{\F{M}}$ has a non-preserved extreme point that is the norm limit of preserved extreme points, hence APEP by Theorem \ref{th:apep_molecules}:

\begin{example}\label{example:extreapepnopreser}
In $\R^2$ with the $\ell_1$ metric, consider the set $N$ consisting of the points $p=(0,0)$, $q=(1,0)$, and $p_k=(0,\frac{1}{k})$, $q_k=(1,\frac{1}{k})$ for $k\in\N$. Let $M=N\cup\set{x_n:n\in\N}$, endowed with the aforementioned metric for $N$ and
\begin{align*}
d(x_n,p) = d(x_n,q) &= \tfrac{1}{2}+\tfrac{1}{n} \\
d(x_n,p_k) = d(x_n,q_k) &= \tfrac{1}{2}+\tfrac{1}{k}+\tfrac{1}{n} \\
d(x_n,x_m) &= 1+\tfrac{1}{n}+\tfrac{1}{m}
\end{align*}
for $n\neq m\in\N$. Note that the quantity
$$
d(p_k,x) + d(q_k,x) - d(p_k,q_k)
$$
is at least $2\pare{\frac{1}{k}-\frac{1}{k+1}}$ for all $k\in\N$ and $x\in M\setminus\set{p_k,q_k}$, therefore every molecule $m_{p_kq_k}$ is a preserved extreme point of $B_{\F{M}}$. Since $p_k\to p$, $q_k\to q$, the molecule $m_{pq}$ is an APEP by Theorem \ref{th:apep_molecules}. Similarly, the quantity
$$
d(p,x) + d(q,x) - d(p,q)
$$
is strictly positive for $x\in M\setminus\set{p,q}$, however its value for $x=x_n$ is $\frac{2}{n}$, which can be made arbitrarily small while $d(p,x),d(q,x)>\frac{1}{2}$. Thus $m_{pq}$ is an unpreserved extreme point of $B_{\F{M}}$.
\end{example}

Observe that Theorem~\ref{th:apep_molecules} reveals that, for a molecule $m_{xy}$ in $\mathcal F(M)$, if $m_{xy}$ is an APEP then every weakly open subset $W$ of $B_{\mathcal F(M)}$ containing $m_{xy}$ contains slices of arbitrarily small diameter. At this point, it could be wondered whether, in the particular case of Lipschitz-free spaces, this phenomenon occurs because, in fact, $m_{xy}$ is contained in non-empty weakly open subsets of arbitrarily small diameter. In the following example we show that this does not hold and that it is possible for a molecule $m_{xy}$ to be an APEP and have the property that every non-empty weakly open subset $W$ of $B_{\mathcal F(M)}$ containing $m_{xy}$ satisfies $\diam(W)=2$. 

Before exhibiting the example, let us recall that a point $x$ in the unit sphere of a Banach space $X$ is said to be a \textit{$\Delta$-point} (respectively \textit{super $\Delta$-point}) if every slice (resp. relatively weakly open subset) of the unit ball of $X$ containing $x$ contains points at distance $2-\varepsilon$ from $x$ for every $\varepsilon>0$.

\begin{example}\label{example:veeorgapep}
Consider the metric space $M_V$ constructed by Veeorg in \cite[Section 3]{Veeorg} as follows. In $\R^2$, consider the points $p=(0,0)$ and $q=(1,0)$ and, for $n\in\N$, the set
$$
S_n = \set{(2^{-n}k,2^{-n}) : k=0,1,\ldots,2^n} .
$$
Then $M_V$ is the set $\set{p,q}\cup\bigcup_{n=1}^\infty S_n$ endowed with the metric
$$
d((x_1,y_1),(x_2,y_2)) = \begin{cases}
\abs{x_1-x_2} &\text{, if }y_1=y_2 \\
\abs{y_1-y_2}+\min\set{x_1+x_2,2-(x_1+x_2)} &\text{, if }y_1\neq y_2
\end{cases} .
$$
Now let $p_n=(0,\frac{1}{2^n+1})$, $q_n=(1,\frac{1}{2^n+1})$ and set $M=M_V\cup\set{p_n,q_n:n\in\N}$ with the metric defined by the same formula. See Figure \ref{figure:veeorgapep}.

\begin{figure}[ht]
  \centering
  \begin{tikzpicture}[scale=6.0]
    \draw[dotted] (-0.01,0) -- (1.0,0);
    \draw (0,-0.01) -- (0,0.5);
    \draw (1,0) -- (1,1/2);

    \foreach \y in {1/16, 1/8, 1/4, 1/2} {
      \draw (0.01,\y) -- (-0.01,\y) node[left] {$\y$};
    }

    \fill (0,0) circle[radius=0.4pt] node[below left] {$p$};
    \fill (1,0) circle[radius=0.4pt] node[below right] {$q$};

    \foreach \sn in {1,2,3,4} {
      \draw (0,{2^(-\sn)}) -- (1,{2^(-\sn)});
      \pgfmathtruncatemacro\aa{2^\sn}
      \foreach \k in {0,...,\aa} {
        \fill ({\k*2^(-\sn)},{2^(-\sn)}) circle[radius=0.4pt];
      }
    }

    \foreach \sn in {1,2,3} {
      \fill (0,{1/(2^(\sn))*3/4}) circle[radius=0.4pt] node[right] {$p_\sn$};
      \fill (1,{1/(2^(\sn))*3/4}) circle[radius=0.4pt] node[right] {$q_\sn$};
      \draw[dotted] (-0.01,{1/(2^(\sn))*3/4}) -- (1.0,{1/(2^(\sn))*3/4});
    }
  \end{tikzpicture}
  \caption{The metric space $M$ from Example \ref{example:veeorgapep}.}
  \label{figure:veeorgapep}
\end{figure}
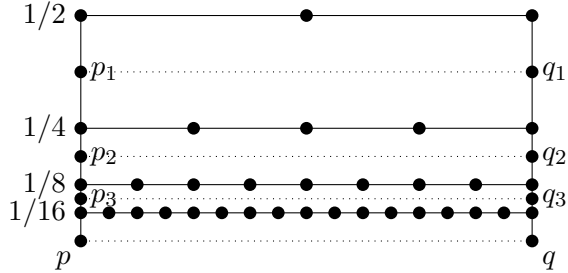

Similarly to Example \ref{example:extreapepnopreser}, the molecule $m_{pq}$ is an unpreserved extreme point of $B_{\F{M}}$ that is an APEP. Indeed, any $u\in M\setminus\set{p,q}$ has the form $u=(x,y)$ for some $y>0$ and therefore
$$
d(p,u)+d(q,u)-d(p,q) = (x+y)+(1-x+y)-1 = 2y
$$
is always positive, but can be made arbitrarily small while keeping e.g. $x=\frac{1}{2}$ to make sure that $d(p,u)$ and $d(q,u)$ remain large, so $m_{pq}$ is an unpreserved extreme point. Moreover, each molecule $m_{p_nq_n}$ is a denting point of $B_{\F{M}}$ as, given any $u=(x,y)\in M\setminus\set{p_n,q_n}$, a similar computation yields
$$
d(p_n,u)+d(q_n,u)-d(p_n,q_n) = 2\abs{y-\frac{1}{2^n+1}}
$$
which has a positive lower bound in $M\setminus\set{p_n,q_n}$. Thus $m_{pq}$ is an APEP of $B_{\F{M}}$ by Theorem \ref{th:apep_molecules} as $p_n\to p$, $q_n\to q$.

In this example, $m_{pq}$ is even a $\Delta$-point. This follows from \cite[Theorem 6.7]{AALMPPV1} as $p$ and $q$ are \textit{discretely connectable} in $M$, i.e. their distance can be approximated by discrete paths in $M$ with arbitrarily small jumps (passing through the sets $S_n$).
\end{example}

\begin{remark}
Recently, E. Basset, Y. Perreau, A. Proch\'azka and T. Veeorg announced the result that, in any Lipschitz-free space, every molecule that is a $\Delta$-point is a super $\Delta$-point. This (still unpublished) result would imply that the APEP $m_{pq}$ from Example \ref{example:veeorgapep} is a super-$\Delta$ point. In particular, APEPs can fail to be points of (weak- to norm-)continuity even in Banach spaces with the RNP.

An even stronger notion is that of \textit{Daugavet point}, i.e. a point $x\in S_X$ such that every slice $S$ of $B_X$ contains points at distance $2-\varepsilon$ from $x$, regardless of whether $x\in S$ or not. However, for Lipschitz-free spaces, APEPs of $B_{\F{M}}$ can never be Daugavet points, as Daugavet points are at distance $2$ from any denting point by \cite[Proposition 3.1]{JRZ}.
\end{remark}

\medskip
\section{Tensor products}\label{section:tensor}

In this section, we aim to study APEPs in projective tensor product spaces. Following the spirit of previous works dealing with the extremal structure in projective tensor products like \cite{ggmr23,werner87}, we will focus on studying the APEPs of sets of the form $\cconv(C\otimes D)$ in $X\pten Y$ for bounded, closed and convex subsets $C\subseteq X$ and $D\subseteq Y$. We refer the reader to Subsection~\ref{subsection:tensor} for necessary notation and background on tensor product theory.

Let us start with the search for necessary conditions for APEPs in $\cconv(C\otimes D)$. It is natural that they would have to be elementary tensors. This is precisely the statement of the next result under appropriate assumptions on the space $K(X,Y^*)$.

\begin{theorem}\label{theo:tensornece}
    Let $X$ and $Y$ be Banach spaces such that $K(X,Y^*)$ is separating for $X \pten Y$. Let $C\subseteq X$ and $D\subseteq Y$ be bounded, closed and convex subsets. If $z$ is an APEP of $\cconv(C\otimes D) \subseteq X\pten Y$, then $z=x\otimes y$ for some $x\in C$ and $y\in D$.
\end{theorem}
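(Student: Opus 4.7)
The plan is to first localize $z$ as a weak limit of elementary tensors in $C\otimes D$, and then exploit the compactness hypothesis on $K(X,Y^*)$ to force that limit to be itself an elementary tensor. Lemma~\ref{lemma:milman_apep} applied with $B=C\otimes D$ places $z$ in the weak closure $\wcl{C\otimes D}\subseteq X\pten Y$, so there is a net $(x_\alpha\otimes y_\alpha)$ in $C\otimes D$ with $x_\alpha\otimes y_\alpha\to z$ weakly. Since $C$ and $D$ are bounded, Banach--Alaoglu lets one pass to a subnet with $x_\alpha\wsconv x^{**}\in X^{**}$ and $y_\alpha\wsconv y^{**}\in Y^{**}$.

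Next, for every $T\in K(X,Y^*)$, compactness of $T$ forces $T^{**}\colon X^{**}\to Y^*$ to be weak$^*$-to-norm continuous on bounded sets, so $Tx_\alpha\to T^{**}x^{**}$ in norm in $Y^*$; combined with $y_\alpha\wsconv y^{**}$ this gives
\[
\langle z,T\rangle=\lim_\alpha\langle Tx_\alpha,y_\alpha\rangle=\langle T^{**}x^{**},y^{**}\rangle.
\]
Specializing to rank-one operators $T=\varphi\otimes\psi\in X^*\otimes Y^*\subseteq K(X,Y^*)$, for which $(\varphi\otimes\psi)^{**}(x^{**})=\langle x^{**},\varphi\rangle\psi$, the identity becomes $\langle z,\varphi\otimes\psi\rangle=\langle x^{**},\varphi\rangle\langle y^{**},\psi\rangle$. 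Now I would exploit that $z\in X\pten Y$ induces a bounded operator $T_z\colon X^*\to Y$ (the nuclear operator associated with $z$) satisfying $\langle T_z\varphi,\psi\rangle=\langle z,\varphi\otimes\psi\rangle$; comparing the two expressions yields the identity $T_z\varphi=\langle x^{**},\varphi\rangle\, y^{**}$ in $Y^{**}$. Since $T_z\varphi$ actually lies in $Y$, this forces the dichotomy: either $x^{**}=0$, or $y^{**}\in Y$; and symmetrically either $y^{**}=0$ or $x^{**}\in X$.

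Finally, I would split into cases. If $x^{**}\neq 0$ and $y^{**}\neq 0$, then $x^{**}=x_0\in X$ and $y^{**}=y_0\in Y$; the weak convergence $x_\alpha\to x_0$ in $X$ and $y_\alpha\to y_0$ in $Y$, together with the weak closedness of $C$ and $D$, gives $x_0\in C$ and $y_0\in D$, and the compactness identity then reads $\langle z-x_0\otimes y_0,T\rangle=0$ on $K(X,Y^*)$, so the separating hypothesis forces $z=x_0\otimes y_0$. If instead $x^{**}=0$, then $\langle z,T\rangle=0$ on all of $K(X,Y^*)$ and separation gives $z=0$; moreover $x_\alpha\to 0$ weakly in $X$, so $0\in C$ and $z=0\otimes y$ for any $y\in D$. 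The case $y^{**}=0$ is symmetric. The main obstacle is the middle step: passing from the abstract bidual elements $x^{**},y^{**}$ back to genuine elements of $X,Y$. The key trick is to play the rank-one version of the compactness identity against the fact that $z\in X\pten Y$ induces an operator valued in $Y$ (not merely $Y^{**}$), which is precisely what pins down one of the two bidual factors inside the original space.
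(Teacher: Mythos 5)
Your argument is correct, and it diverges from the paper in an instructive way. The paper's proof is two lines: after the same first step (Lemma~\ref{lemma:milman_apep} with $B=C\otimes D$, giving $z\in\wcl{C\otimes D}$), it simply quotes \cite[Theorem 2.3]{ggmr23} for the identity $\wcl{C\otimes D}=\wcl{C}\otimes\wcl{D}$ and uses weak closedness of $C$ and $D$. You instead prove the needed inclusion from scratch: you pass to weak$^*$ limits $x^{**},y^{**}$ of the factors in the biduals, use that $T^{**}$ is weak$^*$-to-norm continuous on bounded sets for compact $T$ (valid, since $T^{**}(B_{X^{**}})$ lies in the norm-compact set $\overline{T(B_X)}\subseteq Y^*$, on which the weak$^*$ and norm topologies agree) to get $\langle z,T\rangle=\langle T^{**}x^{**},y^{**}\rangle$ for all $T\in K(X,Y^*)$, and then play the rank-one case of this identity against the canonical operator $T_z\colon X^*\to Y$ induced by $z$ to force $y^{**}\in Y$ (or $x^{**}=0$), and symmetrically $x^{**}\in X$ (or $y^{**}=0$); the separating hypothesis then identifies $z$ with $x_0\otimes y_0$ (or with $0$, which is handled since $x_\alpha\to 0$ weakly gives $0\in C$). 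The trade-off is clear: the paper's route is shorter but outsources the essential content to the cited result, whereas yours is self-contained, makes explicit exactly where compactness and the separating hypothesis enter (compactness only through the bidual continuity and the rank-one operators, separation only at the final identification), and in effect reproves the relevant instance of \cite[Theorem 2.3]{ggmr23} along the way.
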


This result should be compared with \cite[Theorem 1.1]{ggmr23}, where a similar statement is proved for preserved extreme points.

\begin{proof}
Let $z$ be an APEP of $\cconv(C\otimes D)$. An application of Lemma~\ref{lemma:milman_apep} yields that
$$z\in \overline{C\otimes D}^w = \overline{C}^w \otimes \overline{D}^w,$$
thanks to \cite[Theorem 2.3]{ggmr23}. Finally, since $C,D$ are weakly closed, it follows that $z=x\otimes y$ for some $x\in C$ and $y\in D$.\end{proof}

\begin{remark}\label{remark:teonecetensor}
In view of \cite[Theorem 1.1]{ggmr23} it is natural to suspect that, in the above theorem, if $z=x\otimes y\neq 0$ is an APEP then both $x$ and $y$ should be APEPs in $C$ and $D$ respectively. However, we will show in Example~\ref{example:tensorAPEPfactorno} that such a result does not hold.
\end{remark}

In order to establish sufficient conditions for APEPs of a set of the form $\cconv(C\otimes D)$, the above result says that we can reduce to analysing the elementary tensors. Now in our first sufficient condition we center our attention on \cite[Proposition 3.2]{ggmr23}, where it is proved that if $x_0$ is a strongly exposed point of $C$ and $y_0$ is a weakly strongly exposed point of $D$ then $x_0\otimes y_0$ is a weakly strongly exposed point. However, since in the notion of APEP we do not need to localise the point $x_0\otimes y_0$, it seems that the assumptions that $x_0$ is denting and $y_0$ is APEP should be enough to get that $x_0\otimes y_0$ is APEP. This is precisely the content of the following result.

\begin{theorem}\label{theo:tensordienteAPEP}
    Let $X,Y$ be Banach spaces. Let $C\subseteq X$ and $D\subseteq Y$ be bounded, closed,  convex, and symmetric subsets. Let $x_0$ be a denting point of $C$ and $y_0$ be an APEP of $D$. Then, $x_0\otimes y_0$ is an APEP of $\cconv(C\otimes D)$.
\end{theorem}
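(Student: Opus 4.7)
\medskip

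The plan is, given any weakly open neighbourhood $W$ of $x_0\otimes y_0$ in $\cconv(C\otimes D)$, to exhibit a non-empty slice of $\cconv(C\otimes D)$ contained in $W$. First reduce $W$ to the basic form
\[ W=\{z\in\cconv(C\otimes D): |T_k(z-x_0\otimes y_0)|<\varepsilon,\ k=1,\dots,n\} \]
for some $T_1,\dots,T_n\in L(X,Y^*)=(X\pten Y)^*$ and some $\varepsilon>0$. Using the denting of $x_0$, pick $f\in X^*$, $\alpha>0$ such that $S_C:=S(C,f,\alpha)$ contains $x_0$ and has norm-diameter smaller than $\delta$, where $\delta$ is chosen so small (using boundedness of $D$ and of each $T_k$) that $x\in S_C$, $y\in D$ imply $|T_k((x-x_0)\otimes y)|<\varepsilon/2$ for every $k$. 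Using the APEP of $y_0$, observe that $V:=\{y\in D:|T_k(x_0)(y-y_0)|<\varepsilon/2\text{ for all }k\}$ is a weak neighbourhood of $y_0$ in $D$, and therefore contains a non-empty slice $S_D:=S(D,g,\beta)$.

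The decisive step is to take the rank-one functional $T:=f\otimes g\in L(X,Y^*)$ (acting as $T(x\otimes y)=f(x)g(y)$) and consider the slice $S:=S(\cconv(C\otimes D),T,\eta)$ for a suitably small $\eta>0$ to be fixed. To show $S\subseteq W$, fix $z\in S$ and approximate it by convex combinations $\sum_i\lambda_i\,x_i\otimes y_i$ in $\co(C\otimes D)$ with $\lambda_i\geq 0$, $\sum_i\lambda_i=1$, $x_i\in C$, $y_i\in D$. The defining inequality $\sum_i\lambda_i f(x_i)g(y_i)>\sup_{\cconv(C\otimes D)}T-\eta$, together with the pointwise bound $f(x_i)g(y_i)\leq\sup T$, forces (by a Markov-type concentration argument) all but a vanishing fraction of the mass $\lambda_i$ to sit on indices where $f(x_i)$ is close to $\sup_Cf$ and $g(y_i)$ is close to $\sup_Dg$, i.e., on $x_i\in S_C$ and $y_i\in S_D\subseteq V$. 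For such good $i$ the triangle inequality gives
\[ |T_k(x_i\otimes y_i)-T_k(x_0\otimes y_0)|\leq|T_k((x_i-x_0)\otimes y_i)|+|T_k(x_0)(y_i-y_0)|<\tfrac{\varepsilon}{2}+\tfrac{\varepsilon}{2}=\varepsilon, \]
and the small bad-mass contribution is absorbed by taking $\eta$ small relative to the $\|T_k\|$ and $\diam\cconv(C\otimes D)$. Summing and passing to the limit yields $|T_k(z)-T_k(x_0\otimes y_0)|<\varepsilon$ for every $k$, so $z\in W$.

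The hardest step is the Markov-extraction: the supremum of $T=f\otimes g$ on $\cconv(C\otimes D)$ equals $\max\{\sup_Cf\cdot\sup_Dg,\ \inf_Cf\cdot\inf_Dg\}$, and the concentration argument picks out the pairs $(x_i,y_i)$ realising whichever product attains that maximum. Ensuring that this selects the \emph{correct} corner (namely $S_C\times S_D$, so that the triangle-inequality estimate applies) is where the geometric content of the hypotheses enters, and one has to exploit the freedom built into the denting/APEP definitions (choice of orientations of $f$, $g$) together with the identity $(-x)\otimes(-y)=x\otimes y$ in $X\pten Y$, which collapses the antipodal corner back onto the desired one. Fine-tuning of $\alpha,\beta$ and $\eta$ in the preceding estimates then closes the argument.
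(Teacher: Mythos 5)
Your overall strategy is the same as the paper's: dentability of $x_0$ yields a slice $S(C,x^*,\delta')$ of $C$ of small diameter; the APEP property of $y_0$ is applied to the weak neighbourhood of $y_0$ in $D$ determined by the functionals $T_k(x_0)$, producing a slice of $D$ inside it; and one then takes a thin slice of $\cconv(C\otimes D)$ given by the elementary tensor functional $x^*\otimes y^*$. Your ``Markov-type concentration'' is exactly the step that the paper outsources to \cite[Proposition 3.2]{ggmr23} (compare \cite[Lemma 2.1]{ggmr23}, invoked explicitly in the proof of Theorem~\ref{theo:tensorAPEPcompactneigh}, which gives $S(\cconv(C\otimes D),x^*\otimes y^*,\delta^2)\subseteq \co(S(C\otimes D,x^*\otimes y^*,\delta))+4\delta B_{X\pten Y}$).

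However, the step you yourself single out as the hardest is not closed, and as written it cannot be. Your device for discarding the ``antipodal corner'' --- replacing $(x_i,y_i)$ by $(-x_i,-y_i)$ via $(-x)\otimes(-y)=x\otimes y$ --- requires $-x_i\in C$ and $-y_i\in D$, i.e.\ symmetry of $C$ and $D$, which is not among the hypotheses; and the ``choice of orientations of $f,g$'' gives no extra freedom, since replacing $f$ by $-f$ destroys the small slice at $x_0$ that dentability provides. No fine-tuning of $\alpha,\beta,\eta$ can repair this in the stated generality, because the statement itself fails without symmetry: take $X=Y=\R$, $C=D=[-2,1]$, $x_0=y_0=1$. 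Then $x_0$ is a denting point of $C$ and $y_0$ is denting (hence APEP) in $D$, while $\cconv(C\otimes D)=[-2,4]$ in $\R\pten\R\cong\R$ and $x_0\otimes y_0=1$ is an interior point: every nonempty slice of $[-2,4]$ contains $-2$ or $4$, so the relatively weakly open set $(0,2)$ witnesses that $1$ is not an APEP (equivalently, $1\notin\overline{\ext{[-2,4]}}=\{-2,4\}$, cf.\ Theorem~\ref{theo:caraAPEP}). The intended setting --- cf.\ the paper's abstract and the results of \cite{ggmr23,werner87} on which its proof leans, formulated for unit balls or absolutely convex sets --- is the symmetric one, where after normalising $\sup_C x^*=\sup_D y^*=1$ one has $\sup_{C\otimes D}x^*\otimes y^*=1$ and your antipodal collapse is legitimate. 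With that hypothesis added, your outline is correct and essentially reproduces the paper's argument; without it, the gap you leave is fatal, although it coincides with an omission in the statement itself.
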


\begin{proof}
    We can assume that $C, D$ are both different from $\{0\}$, since otherwise $\cconv(C\otimes D)=\{0\}$. Hence, without loss of generality, assume that $\sup_{z\in C}\Vert z\Vert=\sup_{w\in D}\Vert w\Vert=1$.
    Let $U$ be a relatively weakly open neighbourhood of $x_0\otimes y_0$ in $\cconv(C\otimes D)$. We may assume that $U=\bigcap_{i=1}^n S(\cconv(C\otimes D), T_i, \alpha_i)$ for some $T_1,\ldots, T_n \in S_{L(X,Y^*)}$ and some $\alpha_1,\ldots, \alpha_n >0$. Since $x_0\otimes y_0\in U$, we have $T_i(x_0)(y_0)> \sup_{z\in \cconv(C\otimes D)} T_i(z)-\alpha_i$ for every $1\leq i\leq n$. Thus, we may find $\varepsilon_0>0$ so that $T_i(x_0)(y_0)> \sup_{z\in \cconv(C\otimes D)} T_i(z)-\alpha_i+\varepsilon_0$ for every $1\leq i\leq n$. Since $x_0 \in \dent{C}$, there are some $\delta'>0$ and $x^*\in X^*$ with $\sup_{x\in C} x^*(x)=1$ such that $\diam(S(C,x^*,\delta' ))<\frac{\varepsilon_0}{4} $. Moreover, notice that $y_0$ belongs to the set
    $$W = \bigcap_{i=1}^n \left\{ y\in D : T_i(x_0)(y)> \sup_{z\in \cconv(C\otimes D)}T_i(z)-\alpha_i+\varepsilon_0 \right\}$$
    which is a relatively weakly open subset of $D$. Since $y_0$ is an APEP of $D$, there are some $\delta''>0$ and $y^*\in Y^*$ with $\sup_{y\in D}y^*(y)=1$ such that 
    $S\left(D,y^*,\delta''\right) \subseteq W$.
    Finally, taking $\delta=\min\left\{ \delta', \delta'', \frac{\varepsilon_0}{4}\right\}$ and considering the slice $S=S(\cconv(C\otimes D), x^*\otimes y^*, \eta^2)$ where $0<\eta<\min\left\{ \frac{\delta}{4}, \frac{1}{2}\right\}$, we conclude that $S\subseteq U$, exactly as in the proof of \cite[Proposition 3.2]{ggmr23}. Thus, $x_0\otimes y_0$ is an APEP of $\cconv(C\otimes D)$ as desired.
\end{proof}

Now we continue looking for sufficient conditions for a point $x_0\otimes y_0$ to be an APEP. Let us consider the following definition from \cite{ggmr23}.

\begin{definition}\label{defi:compactneigh}
Let $X$ and $Y$ be Banach spaces, and let $A\subseteq X\pten Y$. We say that $u\in A$ \emph{has a compact neighbourhood system for the weak topology in $A$} if, given any weakly open subset $U$ containing $u$, there are slices $S(A, T_i,\alpha_i)$ given by compact operators $T_i\in K(X, Y^*)$ such that 
\[ u \in \bigcap\limits_{i=1}^n S(A, T_i,\alpha_i)\subseteq U.\]     
\end{definition}

\begin{remark}\label{remark:equideficompactneigh}
It is immediate that the above definition is an equivalent reformulation to the definition given in \cite[Definition 3.3]{ggmr23}. It is also clear that $u$ has a compact neighbourhood system for the weak topology in $A\subseteq X\pten Y$ if and only if $u$ is a point of continuity of the identity map 
$$id\colon (A,\sigma(X\pten Y, K(X,Y^*)))\to (A,w).$$
\end{remark}

It is now time for some examples of this situation.

\begin{example}\label{example:compactneigh}\hfill
\begin{enumerate}
\item Given two Banach spaces $X,Y$ such that $L(X,Y^*)=K(X,Y^*)$, it is clear that every $u\in A$ has a compact neighbourhood system for the weak topology in $A$ for every $A\subseteq X\pten Y$.
\item Let $C\subseteq X$ and $D\subseteq Y$ be two closed, absolutely convex and bounded subsets and let $x_0\in \dent{C}, y_0\in \dent{D}$. Then $x_0\otimes y_0$ has a compact neighbourhood system for the weak topology in $\cconv(C\otimes D)$. Indeed, it is proved in \cite[Theorem 1]{werner87} that given $\varepsilon>0$ there exist $x_0^*\in X^*$ and $y_0^*\in Y^*$ such that $x_0\otimes y_0\in S(\cconv(C\otimes D), x_0^*\otimes y_0^*,\alpha)$ and that $\diam(S(\cconv(C\otimes D), x_0^*\otimes y_0^*,\alpha))<\varepsilon$. The result follows since the operator
$$\begin{array}{cccc}
   x_0^*\otimes y_0^*\colon & X  & \longrightarrow & Y^*  \\
    & x & \longmapsto & x_0^*(x)y_0^*
\end{array}$$
is clearly compact.

\item In \cite[Example 3.8]{ggmr23} an equivalent norm $\vert\cdot\vert$ on $\ell_2$ is given such that $e_1$ is a weakly strongly exposed point of $B_{(\ell_2,\vert\cdot\vert)}$ but $e_1\otimes e_1$ is not a weakly strongly exposed point of $B_{(\ell_2,\vert\cdot\vert)\pten (\ell_2,\vert\cdot\vert)}$. According to \cite[Theorem 1.3]{ggmr23}, the point $e_1\otimes e_1$ fails to have a compact neighbourhood system for the weak topology in $B_{(\ell_2,\vert\cdot\vert)\pten (\ell_2,\vert\cdot\vert)}$.
\end{enumerate}
\end{example}

The following result establishes that APEPs remain stable under tensor products, provided that a suitable compact neighbourhood system exists.

\begin{theorem}\label{theo:tensorAPEPcompactneigh}
Let $X$ and $Y$ be two Banach spaces, and let $C\subseteq X$, $D\subseteq Y$ be bounded, closed,  convex and symmetric subsets. Let $x_0\in\ape{C}$ and $y_0\in\ape{D}$. Assume that $x_0\otimes y_0$ has a compact neighbourhood system for the weak topology in $\cco(C\otimes D)\subseteq X\pten Y$. Then $x_0\otimes y_0$ is an APEP of $\cco(C\otimes D)$.
\end{theorem}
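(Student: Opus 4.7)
The plan is as follows. Let $W$ be a relatively weakly open neighbourhood of $x_0\otimes y_0$ in $\cco(C\otimes D)$; my goal is to exhibit a non-empty slice of $\cco(C\otimes D)$ contained in $W$. The compact neighbourhood system hypothesis yields compact operators $T_1,\ldots,T_n\in K(X,Y^*)$ and $\alpha_1,\ldots,\alpha_n>0$ such that
\[ x_0\otimes y_0\in\bigcap_{i=1}^n S(\cco(C\otimes D),T_i,\alpha_i)\subseteq W, \]
reducing the problem to producing a single slice inside this finite intersection. As in Theorem~\ref{theo:tensordienteAPEP}, I may assume $C\neq\set{0}\neq D$ and normalise so that $\sup\set{\norm{x}:x\in C}=\sup\set{\norm{y}:y\in D}=1$.

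Next I would exploit compactness of the $T_i$. Since a compact operator $T\colon X\to Y^*$ maps bounded weakly convergent nets to norm convergent ones (its range being relatively norm compact, where weak and norm topologies coincide), the bilinear evaluation $(x,y)\mapsto T_i(x)(y)$ is jointly continuous at $(x_0,y_0)$ for the weak $\times$ weak topology on $C\times D$: one writes
\[ T_i(x)(y)-T_i(x_0)(y_0)=\bigl(T_i(x)-T_i(x_0)\bigr)(y)+T_i(x_0)(y-y_0), \]
where the first summand is small when $x$ is suitably weakly close to $x_0$ (by compactness and boundedness of $D$) and the second is small when $y$ is weakly close to $y_0$. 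Picking $\varepsilon>0$ with $T_i(x_0)(y_0)>\sup T_i(\cco(C\otimes D))-\alpha_i+2\varepsilon$ for every $i$, this yields weakly open neighbourhoods $U_X\ni x_0$ in $C$ and $U_Y\ni y_0$ in $D$ with $T_i(x)(y)>\sup T_i(\cco(C\otimes D))-\alpha_i+\varepsilon$ for all $(x,y)\in U_X\times U_Y$ and all $i$.

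Because $x_0\in\ape{C}$ and $y_0\in\ape{D}$, there exist non-empty slices $S_C=S(C,\varphi,\beta)\subseteq U_X$ and $S_D=S(D,\psi,\gamma)\subseteq U_Y$, and after rescaling I may assume $\sup\varphi(C)=\sup\psi(D)=1$. I would then test the candidate slice $S=S(\cco(C\otimes D),\varphi\otimes\psi,\eta^2)$ for a sufficiently small $\eta>0$ and argue, following the Werner-style argument used in \cite[Proposition~3.2]{ggmr23} and invoked in the proof of Theorem~\ref{theo:tensordienteAPEP}, that every $z\in S$ is approximated by convex combinations $\sum_k\lambda_k\, x_k\otimes y_k$ in which the total mass carried by indices with $(x_k,y_k)\notin S_C\times S_D$ is of order $\eta$. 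On the ``good'' indices the inequalities $T_i(x_k)(y_k)>\sup T_i-\alpha_i+\varepsilon$ hold by the previous step, while the bad mass contributes at most $O(\eta)\max_i\norm{T_i}$ in absolute value to $T_i(z)$; choosing $\eta$ small enough in terms of $\varepsilon$ and $\max_i\norm{T_i}$ forces $T_i(z)>\sup T_i-\alpha_i$ for every $i$, whence $S\subseteq\bigcap_i S(\cco(C\otimes D),T_i,\alpha_i)\subseteq W$.

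The main obstacle I expect is the final gluing step: the mass-concentration lemma for $S$ requires a careful double-slicing (as in \cite[Proposition~3.2]{ggmr23}) to certify that in any near-optimal convex combination a proportion $1-O(\eta)$ of indices $k$ must \emph{simultaneously} satisfy $x_k\in S_C$ and $y_k\in S_D$. The novelty compared with Theorem~\ref{theo:tensordienteAPEP} is that here neither factor need be denting, so the small-diameter slice trick from that proof is not available and must be replaced precisely by the weak-to-norm continuity of the $T_i$ provided by the compact neighbourhood system hypothesis, which is the only ingredient controlling the error term $\norm{T_i(x)-T_i(x_0)}\,\norm{y}$ on weakly open sets.
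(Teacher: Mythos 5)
Your proposal is correct, and it reaches the same intermediate milestone as the paper --- two slices $S_C=S(C,\varphi,\beta)$ and $S_D=S(D,\psi,\gamma)$ with $S_C\otimes S_D$ inside the slightly shrunk slices $\bigcap_i S(\cconv(C\otimes D),T_i,\alpha_i-\varepsilon)$, followed by the same endgame --- but it gets there by a genuinely different use of the compactness. The paper applies the APEP hypothesis on $x_0$ first, to the weakly open set $\bigcap_i\{z\in C: T_i(z)(y_0)>1-\alpha_i+\eta\}$ (defined by honest functionals, no compactness needed), and only then uses compactness through total boundedness of $T_i(S(C,x_0^*,\delta'))$: it extracts a finite $\tfrac{\eta}{2}$-net $x_1,\dots,x_m$ in the slice so that the condition on $y$ becomes the weakly open set $\bigcap_{i,j}\{y\in D: T_i(x_j)(y)>1-\alpha_i+\eta\}$, to which APEP of $y_0$ is applied. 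You instead front-load the compactness as weak-to-norm continuity of each $T_i$ on the bounded set $C$ (valid, since $\overline{T_i(C)}$ is norm compact and weak and norm topologies agree there), so that $U_X=\bigcap_i\{x\in C:\norm{T_i(x)-T_i(x_0)}<\varepsilon/2\}$ is relatively weakly open; this pins $T_i(x)$ near the single point $T_i(x_0)$ rather than near a finite net, gives the uniform lower bound on $T_i(x)(y)$ over the product neighbourhood $U_X\times U_Y$, and lets you invoke the two APEP hypotheses independently and symmetrically, avoiding the covering argument. Your concluding step --- the slice $S(\cconv(C\otimes D),\varphi\otimes\psi,\eta^2)$, mass concentration on elementary tensors with both factors in $S_C$ and $S_D$, and an error of order $\eta\max_i\norm{T_i}$ --- is exactly what the paper does via \cite[Lemma 2.1]{ggmr23} (it writes the small slice inside $\co(S(C\otimes D,\varphi\otimes\psi,\delta))+4\delta B_{X\pten Y}$ and chooses $\delta\leq\eta/(8\max_i\norm{T_i})$), so delegating it to that lemma, as you do, is legitimate and not a gap; just make sure to fix $\eta$ smaller than the parameters $\beta,\gamma$ of $S_C,S_D$ so that the double-slicing really lands both factors in those slices. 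In short: your decoupled argument is a bit cleaner and makes the role of compactness (weak-to-norm continuity) transparent, while the paper's interleaved slice--net--slice argument stays closer to the template of \cite[Proposition 3.2]{ggmr23}; both are equally general here.
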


This result should be compared with \cite[Theorem 1.3]{ggmr23}, where an analogous statement is proved for weakly strongly exposed points. Moreover, we employ here many ideas from the proof of that result.

\begin{proof} We can assume that $C, D$ are both different from $\{0\}$, since otherwise $\cconv(C\otimes D)=\{0\}$. Hence, without loss of generality, assume that $\sup_{z\in C}\Vert z\Vert=\sup_{w\in D}\Vert w\Vert=1$.
Let $U$ be a weak neighbourhood of $x_0\otimes y_0$ in $\cconv(C\otimes D)$. By the assumption, we can assume that $U=\bigcap\limits_{i=1}^n S(\cconv(C\otimes D), T_i,\alpha_i)$ for certain compact operators $T_1,\ldots, T_n\colon X\to Y^*$. Furthermore, we can assume $\sup_{u\in \cconv(C\otimes D)}T_i(u)=1$ for every $i$. Let $\eta>0$ small enough so that $T_i(x_0\otimes y_0)>1-\alpha_i+\eta$ holds for every $1\leq i\leq n$. Moreover, observe that $x_0\in \bigcap\limits_{i=1}^n \{z\in C: T_i(z)(y_0)>1-\alpha_i+\eta\}$, which is a relatively weakly open subset of $C$. Since $x_0$ is an APEP of $C$ there exists a slice $S(C,x_0^*,\delta')$ 
such that $\sup_{z\in C} x_0^*(z)=1$ and that \[S(C, x_0^*,\delta')\subseteq \bigcap\limits_{i=1}^n \{z\in C: T_{i}(z)(y_0)>1-\alpha_i+\eta\}.\]

Now, for every $1\leq i\leq n$, the set $T_i(S(C, x_0^*,\delta'))$ is a relatively compact subset of $Y^*$. Using the compactness condition on all the $T_i$ we can find a finite set $x_1,\ldots, x_m\in S(C, x_0^*,\delta')$ so that the balls $B(T_i(x_j),\frac{\eta}{2}), 1\leq j\leq m$, cover $T_i(S(C, x_0^*,\delta'))$ for every $1\leq i\leq n$. Observe that $T_i(x_j)(y_0)>1-\alpha_i+\eta$ holds for every $1\leq i\leq n$ and $1\leq j\leq m$. Consequently, 
$$y_0\in\bigcap\limits_{i=1}^n\bigcap\limits_{j=1}^m\{y\in D: T_i(x_j)(y)>1-\alpha_i+\eta\}.$$
Since $y_0$ is an APEP of $D$ we can find a slice $S(D,y_0^*,\delta'')$ such that $\sup_{w\in D}y_0^*(w)=1$ and that \[S(D,y_0^*,\delta'')\subseteq  \bigcap\limits_{i=1}^n\bigcap\limits_{j=1}^m\{y\in D: T_i(x_j)(y)>1-\alpha_i+\eta\}.\]

We claim now that 
\[ S(C, x^*_0, \delta')\otimes S(D, y^*_0, \delta'')\subseteq \bigcap_{i=1}^n S\left(\cconv(C\otimes D), T_i, \alpha_i-\frac{\eta}{2}\right).\]
Indeed, let $x\in S(C, x^*_0,\delta')$ and $y\in S(D, y^*_0, \delta'')$. We have, for every $i\in\{1,\ldots, n\}$, an index $j_i\in \{1,\ldots, m\}$ such that $\Vert T_i(x)-T_i(x_{j_i})\Vert<\frac{\eta}{2}$. On the other hand, since $S(D,y_0^*,\delta'')\subseteq  \bigcap\limits_{i=1}^n\bigcap\limits_{j=1}^m\{y\in D: T_i(x_j)(y)>1-\alpha_i+\eta\}$ we have that, for every $1\leq i\leq n$, $T_i(x_{j_i})(y)>1-\alpha_i+\eta$. Consequently
\[
T_i(x)(y)\geq T_i(x_{j_i})(y)-\Vert T_i(x_{j_i})-T_i(x)\Vert >1-\alpha_i+\eta-\frac{\eta}{2}
 =1-\alpha_i+\frac{\eta}{2}.
\]

Take $\delta:=\min\{\delta',\delta'', \frac{\eta}{8\max_{1\leq i\leq n}{\norm{T_i}}},\frac{1}{2}\}$ and consider $S:=S(\cconv(C\otimes D),x_0^*\otimes y_0^*, \delta^2)$ which is non-empty since $\sup_{z\in C} x_0^*(z)=1=\sup_{w\in D}y_0^*(w)$.  Moreover, 
$$S\subseteq \co(S(C\otimes D,x_0^*\otimes y_0^*,\delta))+4\delta B_{X\pten Y}$$
by virtue of \cite[Lemma 2.1]{ggmr23}. Now, given $1\leq i\leq n$, since $1-\delta\geq\max\{1-\delta',1-\delta''\}$ we conclude that every element  $x\otimes y$ of $S(C\otimes D,x_0^*\otimes y_0^*,\delta)$ satisfies  $x_0^*(x)>1-\delta'$ and $y_0^*(y)>1-\delta''$, so $T_i(x)(y)>1-\alpha_i+\frac{\eta}{2}$. Since $T_i$ is a linear continuous functional on $X\pten Y$ we conclude that $T_i(z)\geq 1-\alpha_i+\frac{\eta}{2}$ holds for every $1\leq i\leq n$ and every $z\in \co(S(C\otimes D,x_0^*\otimes y_0^*,\delta))$. Henceforth, given $z\in S$ we can find $u\in \co(S(C\otimes D,x_0^*\otimes y_0^*,\delta))$ and $v\in B_{X\pten Y}$ so that $z=u+4\delta v$. Now, given $1\leq i\leq n$ we get
$$T_i(z)=T_i(u)+4 \delta T_i(v)\geq 1-\alpha_i+\frac{\eta}{2}-4\delta\norm{T_i}>1-\alpha_i,$$
from which we conclude that $z\in \bigcap\limits_{i=1}^n S(\cconv(C\otimes D), T_i,\alpha_i)=U$. This proves that $S\subseteq U$.

Summarising, we have proved that every relatively weakly open subset of $\cconv(C\otimes D)$ containing $x_0\otimes y_0$ actually contains a slice $S(\cconv(C\otimes D), x_0^*\otimes y_0^*,\alpha)$. From here we conclude that $x_0\otimes y_0$ is an APEP, as requested.
\end{proof}

Let us now observe that we do not need both elements $x_0$ and $y_0$ to be APEPs in Theorem~\ref{theo:tensorAPEPcompactneigh}.

\begin{example}\label{example:tensorAPEPfactorno}
Let $X=C([0,1])$ and let $Y=\ell_p$ for $2<p<\infty$. Clearly both $X$ and $Y$ have the AP and, moreover,
$$L(X,Y^*)=L(C(K),\ell_{p'})=K(C(K),\ell_{p'})=K(X,Y^*)$$
by \cite[Exercise 6.10]{alka} since $1<p'<2$. Let $x=\frac{1}{2}f$ where $f(t)=1$ for every $t\in [0,1]$, and take $y\in S_Y$. It is clear that $x$ is not an APEP of $B_X$ by Theorem~\ref{theo:C(K)charapep}. However $x\otimes y=f\otimes \left(\frac{1}{2}y\right)$ is an APEP by Theorem~\ref{theo:tensorAPEPcompactneigh} since $f$ is an APEP of $B_X$ (Theorem~\ref{theo:C(K)charapep}) and $\frac{1}{2}y$ is an APEP of $B_Y$ by Example~\ref{exam:lpinfidim}.
\end{example}

It is clear that the above example is based on the absence of uniqueness in the representation of an elementary tensor in a projective tensor product. In order to deal with this difficulty, and taking into account the hypotheses in Theorems~\ref{theo:tensornece} and \ref{theo:tensorAPEPcompactneigh}, we will end the section by studying APEPs under the assumption that $K(X,Y^*)=L(X,Y^*)$, in other words, that every $T\in L(X,Y^*)$ is compact. 
If we additionally require that either $X$ or $Y$ has the AP then we get
$$(X\pten Y)^*=L(X, Y^*)=K(X,Y^*)=X^*\iten Y^*.$$

Note that, for instance, this is the case when $X=\ell_p$ and $Y=\ell_{q'}$ with $1\leq q<p<\infty$ and $1/q+1/q'=1$, thanks to Pitt's theorem (see e.g. Proposition 4.49 in \cite{checos}); a version for Lorentz and Orlicz sequence spaces holds too \cite{AO}. Recall also that for a reflexive space $X$ and a Banach space $Y$, one of them with the compact approximation property, the condition $K(X,Y)=L(X,Y)$ is equivalent to the fact that every operator from $X$ to $Y$ attains its norm. This is shown in \cite{DJMC}, extending previous results of Holub and Mujica in the reflexive case.

In the next result we study APEPs of the unit ball of a projective tensor product $X\pten Y$ under the assumption that $L(X,Y^*)=K(X,Y^*)$. The main technique will be the one used in \cite[Theorem 2.1]{ruste82}. As a byproduct, we obtain a description of all preserved extreme points in this case.

\begin{theorem}\label{th:xtimesyAPEPypreext}
Let $X, Y$ be Banach spaces such that $X^*$ or $Y^*$ has the AP and  $K(X,Y^*)=L(X,Y^*)$. Let $z\in B_{X\pten Y}$ with $z\neq 0$. Then:
\begin{itemize}
    \item[a)] $z\in\ape{B_{X\pten Y}}$ if, and only if, $z=x\otimes y$ for some $x\in\ape{B_X}$ and $y\in\ape{B_Y}$. 
    \item[b)] $z\in\preext{B_{X\pten Y}}$ if, and only if, $z=x\otimes y$ for some $x\in\preext{B_X}$ and $y\in\preext{B_Y}$. 
\end{itemize}
\end{theorem}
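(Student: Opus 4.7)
The plan is to exploit Theorem~\ref{theo:caraAPEP} together with the hypothesis-driven identifications $(X\pten Y)^{*}=K(X,Y^*)=X^*\iten Y^*$, hence $(X\pten Y)^{**}=(X^*\iten Y^*)^*$. The natural embedding $X^{**}\otimes Y^{**}\hookrightarrow (X^*\iten Y^*)^*$ given by $\langle x^{**}\otimes y^{**},u^*\otimes v^*\rangle=x^{**}(u^*)\,y^{**}(v^*)$ preserves the projective norm, so that $\|x^{**}\otimes y^{**}\|=\|x^{**}\|\|y^{**}\|$. The key technical step, adapting the Ruess-Stegall technique of \cite[Theorem~2.1]{ruste82}, is to establish
\[ \ext{B_{(X\pten Y)^{**}}}=\left\{x^{**}\otimes y^{**}:x^{**}\in\ext{B_{X^{**}}},\ y^{**}\in\ext{B_{Y^{**}}}\right\}, \]
with tensors interpreted via this embedding.

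Granting the above description, both reverse implications are swift. For (a), the identity $(X\pten Y)^{*}=K(X,Y^*)$ means every weak neighbourhood in $B_{X\pten Y}$ is described by compact operators, so every point trivially admits a compact neighbourhood system for the weak topology; applying Theorem~\ref{theo:tensorAPEPcompactneigh} with $C=B_X$, $D=B_Y$ gives $x\otimes y\in\ape{B_{X\pten Y}}$ whenever $x\in\ape{B_X}$ and $y\in\ape{B_Y}$. For (b), $x\in\preext{B_X}$ and $y\in\preext{B_Y}$ mean $x\in\ext{B_{X^{**}}}$ and $y\in\ext{B_{Y^{**}}}$, and the key description places $x\otimes y$ in $\ext{B_{(X\pten Y)^{**}}}$, hence $x\otimes y\in\preext{B_{X\pten Y}}$.

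For the forward direction of (a), let $z\in\ape{B_{X\pten Y}}$ with $z\neq 0$. Since $K(X,Y^*)$ separates $X\pten Y$ (by the AP hypothesis), Theorem~\ref{theo:tensornece} gives $z=x_0\otimes y_0$ with both factors non-zero. Theorem~\ref{theo:caraAPEP} provides a net $(e_\alpha)\subseteq\ext{B_{(X\pten Y)^{**}}}$ with $e_\alpha\wsconv z$, and the key description writes $e_\alpha=x^{**}_\alpha\otimes y^{**}_\alpha$. Weak$^*$ compactness of $B_{X^{**}}$ and $B_{Y^{**}}$ permits, along a subnet, the limits $x^{**}_\alpha\wsconv x^{**}$ and $y^{**}_\alpha\wsconv y^{**}$. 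To transfer these to $w^*$-convergence of the tensors inside $(X\pten Y)^{**}$, note that for any $T\in K(X,Y^*)$ Gantmacher's theorem produces a compact operator $T^{**}\colon X^{**}\to Y^*$; along a further subnet $T^{**}(x^{**}_\alpha)\to T^{**}(x^{**})$ in norm, and combined with $y^{**}_\alpha\wsconv y^{**}$ this yields $\langle e_\alpha,T\rangle=y^{**}_\alpha(T^{**}(x^{**}_\alpha))\to y^{**}(T^{**}(x^{**}))$. Hence $z=x^{**}\otimes y^{**}$, and uniqueness of non-zero elementary tensor factorisations applied to $z=x_0\otimes y_0=x^{**}\otimes y^{**}$ forces $x^{**}=\lambda x_0\in X$ and $y^{**}=\lambda^{-1}y_0\in Y$ for some $\lambda\neq 0$. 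Both factors lie in their respective unit balls and are weak$^*$-limits of extreme points of the bidual unit balls, so Theorem~\ref{theo:caraAPEP} yields $x^{**}\in\ape{B_X}$ and $y^{**}\in\ape{B_Y}$.

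The forward direction of (b) is an immediate specialisation where the limit step is unnecessary: the key description directly decomposes $z\in\ext{B_{(X\pten Y)^{**}}}$ as $x^{**}\otimes y^{**}$ with $x^{**}\in\ext{B_{X^{**}}}$ and $y^{**}\in\ext{B_{Y^{**}}}$, and the same uniqueness argument places the factors in $B_X$ and $B_Y$ as preserved extreme points. The main obstacle is the key structural description of $\ext{B_{(X\pten Y)^{**}}}$, where the Ruess-Stegall technique of \cite[Theorem~2.1]{ruste82} interacts essentially with the hypothesis $K(X,Y^*)=L(X,Y^*)$; a secondary but routine technicality is the weak$^*$ convergence of elementary tensors of biduals, resolved above via the compactness of $T^{**}$.
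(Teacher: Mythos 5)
Your proposal is correct and follows essentially the same route as the paper: the reverse implications via Theorem~\ref{theo:tensorAPEPcompactneigh} and the tensor description of bidual extreme points, and the forward ones via Theorem~\ref{theo:tensornece}, Theorem~\ref{theo:caraAPEP}, and the identification $(X\pten Y)^{**}=(X^*\iten Y^*)^*$ together with the Ruess--Stegall description of $\ext{B_{(X^*\iten Y^*)^*}}$. The only differences are cosmetic: the ``key structural description'' you flag as the main obstacle is obtained in the paper by directly citing \cite[Theorem 1.1]{ruste82} (no adaptation of the technique is needed once $(X\pten Y)^*=K(X,Y^*)=X^*\iten Y^*$ is available from the AP hypothesis), and your Gantmacher-type compactness argument for the weak$^*$ convergence of the nets $x^{**}_\alpha\otimes y^{**}_\alpha$ plays exactly the role of the paper's Lemma~\ref{lemma:weak*closuretensor}.
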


This lemma is likely known to experts but we include its proof for the sake of completeness, as it extends the weak-topology version \cite[Theorem 2.3]{ggmr23} to the weak* setting.

\begin{lemma}\label{lemma:weak*closuretensor}
    Let $X,Y$ be Banach spaces. If $C\subseteq X^*$ and $D\subseteq Y^*$ are bounded subsets, then
    $$\overline{C}^{w^*}\otimes \overline{D}^{w^*} = \overline{C\otimes D}^{w^*},$$
    considering $X^*\otimes Y^* \subseteq (X\iten Y)^*$.
\end{lemma}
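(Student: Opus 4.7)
The plan is to verify both inclusions separately, leveraging the fact (recalled in Subsection~\ref{subsection:tensor}) that every $u\in X\iten Y$ induces an operator $T_u\colon X^*\to Y$ which is weak$^*$-to-weak continuous. Note also that $C\otimes D$ and $\overline{C}^{w^*}\otimes\overline{D}^{w^*}$ both sit inside $X^*\otimes Y^*\subseteq (X\iten Y)^*$, and the closures are computed with respect to the weak$^*$ topology of $(X\iten Y)^*$, generated by the elements of $X\iten Y$.

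For the inclusion $\supseteq$, I would fix $c_0\in\overline{C}^{w^*}$ and $d_0\in\overline{D}^{w^*}$ and show that $c_0\otimes d_0$ lies in the weak$^*$ closure of $C\otimes D$ via a two-step approximation. Given a basic weak$^*$ neighbourhood determined by $u_1,\ldots,u_n\in X\iten Y$ and $\varepsilon>0$, first pick $d\in D$ with $|(d-d_0)(T_{u_i}(c_0))|<\varepsilon/2$ for every $i$; this is possible because each $T_{u_i}(c_0)\in Y$ and $d_0\in\overline{D}^{w^*}$. Then, for the fixed $d$, each functional $c\mapsto d(T_{u_i}(c))$ on $X^*$ is weak$^*$-continuous (composition of the weak$^*$-to-weak continuous map $T_{u_i}$ with the weakly continuous functional $d$), hence of the form $c\mapsto c(x_i)$ for some $x_i\in X$. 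Using $c_0\in\overline{C}^{w^*}$, pick $c\in C$ with $|(c-c_0)(x_i)|<\varepsilon/2$ for every $i$. A triangle inequality then gives $|(c\otimes d - c_0\otimes d_0)(u_i)|<\varepsilon$ for all $i$, as required.

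For the inclusion $\subseteq$, take $z$ in the weak$^*$ closure of $C\otimes D$, so $z=\lim_\alpha c_\alpha\otimes d_\alpha$ weak$^*$ with $c_\alpha\in C$, $d_\alpha\in D$. Since $C$ and $D$ are bounded, $\overline{C}^{w^*}$ and $\overline{D}^{w^*}$ are weak$^*$ compact, so passing to a subnet twice I may assume $c_\alpha\to c$ and $d_\alpha\to d$ weak$^*$ for some $c\in\overline{C}^{w^*}$, $d\in\overline{D}^{w^*}$. Evaluating on any elementary tensor $x\otimes y\in X\otimes Y$ yields $(c_\alpha\otimes d_\alpha)(x\otimes y)=c_\alpha(x)d_\alpha(y)\to c(x)d(y)=(c\otimes d)(x\otimes y)$, so $z$ and $c\otimes d$ agree on $X\otimes Y$. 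As $X\otimes Y$ is dense in $X\iten Y$ and both functionals are continuous, this forces $z=c\otimes d\in\overline{C}^{w^*}\otimes\overline{D}^{w^*}$.

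The main obstacle I foresee is the $\supseteq$ direction, because the evaluation $(c,d)\mapsto (c\otimes d)(u)$ is only separately, not jointly, weak$^*$-continuous; a naive attempt to approximate $c_0$ and $d_0$ simultaneously from $C$ and $D$ will fail. The two-step argument sketched above circumvents this by first fixing $c_0$ to produce $d$ via the weak$^*$-density of $D$, and then exploiting the now-fixed $d$ to translate the approximation of $c_0$ into the weak$^*$ topology of $X^*$ through the weak$^*$-to-weak continuity of $T_{u_i}$.
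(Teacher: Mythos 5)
Your proof is correct and follows essentially the same route as the paper: one inclusion is obtained by exploiting separate weak$^*$-continuity in each tensor factor (you do this via a two-step neighbourhood estimate using that $T_{u_i}^*d\in X$, while the paper phrases the same idea through the $w^*$-$w^*$-continuous adjoints $y^*\mapsto x^*\otimes y^*$ and takes closures one variable at a time), and the reverse inclusion by extracting weak$^*$-convergent subnets from the compact sets $\overline{C}^{w^*}$ and $\overline{D}^{w^*}$ and identifying the limit with $c\otimes d$. The only cosmetic difference is your finish for the latter step (agreement of $z$ and $c\otimes d$ on the dense subspace $X\otimes Y$ plus continuity) in place of the paper's $\varepsilon/2$-argument with normalised bounds.
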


\begin{proof} If $C=\{0\}$ or $D=\{0\}$ then the result is trivial, so assume $C,D\neq \{0\}$.
    First of all, since $C$ and $D$ are bounded we may assume that $$\sup_{x^*\in C} \norm{x^*}=1=\sup_{y^*\in D}\norm{y^*}.$$ We show first that $\overline{C}^{w^*}\otimes \overline{D}^{w^*} \subseteq \overline{C\otimes D}^{w^*}$. For each $x^*\in X^*$, define the operator $T_{x^*} \colon X\iten Y \rightarrow Y$ given by $T_{x^*}(x\otimes y) = x^*(x)y$ for all $x\in X$ and $y\in Y$, which extends by linearity and continuity to the whole of $X\iten Y$. Hence, the adjoint $T_{x^*}^* \colon Y^*\rightarrow (X\iten Y)^*$ is given by $$T_{x^*}^*(y^*) = x^*\otimes y^*\in (X\iten Y)^*, \quad \forall y^*\in Y^*,$$
    and it is weak*-to-weak* continuous. Therefore, 
    $$\{x^*\} \otimes \overline{D}^{w^*} = \ T^*_{x^*}\left(\overline{D}^{w^*}\right) \subseteq \overline{T_{x^*}^*(D)}^{w^*}= \overline{\{x^*\}\otimes D}^{w^*} \subseteq\overline{C\otimes D}^{w^*},$$
    and this holds for every $x^*\in C$. So, $C\otimes \overline{D}^{w^*} \subseteq \overline{C\otimes D}^{w^*}$ and also $ \overline{C}^{w^*}\otimes D \subseteq \overline{C\otimes D}^{w^*}$, by a symmetric argument. Hence, 
    $$\overline{C}^{w^*}\otimes \overline{D}^{w^*}\subseteq \overline{\overline{C}^{w^*}\otimes D }^{w^*}\subseteq \overline{\overline{C\otimes D}^{w^*}}^{w^*}=\overline{C\otimes D}^{w^*}.$$
    
    It remains to prove that $\overline{C}^{w^*}\otimes \overline{D}^{w^*} \supseteq \overline{C\otimes D}^{w^*}$.
    Take $z\in \overline{C\otimes D}^{w^*}$ and pick a net $(x^*_\alpha \otimes y^*_\alpha)_\alpha \subseteq C\otimes D$ converging weak* to $z$. Since $\overline{C}^{w^*}$ and $\overline{D}^{w^*}$ are weak*-compact, we may assume (by taking subnets if necessary) that $(x^*_\alpha)_\alpha$ and $(y^*_\alpha)_\alpha$ converge weak* to some $x^*\in \overline{C}^{w^*}$ and $y^*\in \overline{D}^{w^*}$ respectively. Let us show that $(x^*_\alpha \otimes y^*_\alpha)_\alpha$ converges weak* to $x^*\otimes y^*$ in $(X\iten Y)^*$. It is clear that $((x^*_\alpha \otimes y_\alpha^*)(x\otimes y) )_\alpha = (x^*_\alpha(x)y^*_\alpha (y))_\alpha $ converges to $x^*(x)y^*(y)=(x^*\otimes y^*)(x\otimes y)$ for all $x\in X$ and $y\in Y$. By linearity, we also have $(x^*_\alpha\otimes y^*_\alpha)(v)\to (x^*\otimes y^*)(v)$ for $v\in X\otimes Y$. Finally, take $u\in X\iten Y$ and $\varepsilon>0$. On the one hand, pick $v\in X\otimes Y$ such that $\norm{u-v}\leq \frac{\varepsilon}{4}$. Hence, for every $\alpha$, we have
    \begin{align*}
          \left| (x^*_\alpha \otimes y_\alpha^*-x^*\otimes y^*)\left(u-v\right)\right|&\leq  \left(\norm{x^*_\alpha}\norm{y^*_\alpha} + \norm{x^*}\norm{y^*}\right)\frac{\varepsilon}{4}\leq \frac{\varepsilon}{2}.   
    \end{align*}
    On the other hand, pick $\beta$ such that $$\left| (x_\alpha^*\otimes y_\alpha^*)\left(v\right)- (x^*\otimes y^*) \left(v\right) \right|<\frac{\varepsilon}{2}, \quad \forall \alpha\geqn\beta.$$ Then,
    $$\left| (x^*_\alpha \otimes y_\alpha^*-x^*\otimes y^*)(u)\right|<\frac{\varepsilon}{2}+\frac{\varepsilon}{2} =\varepsilon, \quad \forall \alpha\geqn \beta,$$
    so $(x^*_\alpha \otimes y^*_\alpha)_\alpha$ converges weak* to $x^*\otimes y^*$. By uniqueness of the limit, $z=x^*\otimes y^* \in \overline{C}^{w^*}\otimes \overline{D}^{w^*}$.
\end{proof}

Now we can provide the pending proof.

\begin{proof}[Proof of Theorem~\ref{th:xtimesyAPEPypreext}] a) Thanks to Theorem \ref{theo:tensorAPEPcompactneigh} we know that if $x$ is an APEP of $B_X$ and $y$ is an APEP of $B_Y$, then $x\otimes y$ is an APEP of $B_{X\pten Y}$. Conversely, assume that $z$ is an APEP of $B_{X\pten Y}$. By Theorem \ref{theo:tensornece}, $z\in B_X\otimes B_Y$. It remains to be shown that $z=x\otimes y$ for some $x\in\ape{B_X}$ and $y\in\ape{B_Y}$. By virtue of Theorem \ref{theo:caraAPEP}, that is equivalent to $x\in \overline{\ext{ B_{X^{**}}}}^{w^*}$ and $y\in \overline{\ext{B_{Y^{**}}}}^{w^*}$. Furthermore, since $z\in B_{X\pten Y}$ is an APEP we have $z \in \overline{\ext{B_{(X\pten Y)^{**}}}}^{w^*}\cap B_{X\pten Y}$. Therefore, let us show that \begin{equation}\label{Eq:xtimesyAPEP}
    \overline{\ext{B_{X^{**}}}}^{w^*}\otimes \overline{\ext{B_{Y^{**}}}}^{w^*}=\overline{\ext{B_{(X\pten Y)^{**}}}}^{w^*}
\end{equation} 
Indeed, 
$$ \overline{\ext{B_{X^{**}}}}^{w^*}\otimes \overline{\ext{B_{Y^{**}}}}^{w^*}= \overline{\ext{B_{X^{**}}}\otimes \ext{B_{Y^{**}}}}^{w^*},$$
using Lemma \ref{lemma:weak*closuretensor}. Finally, $\ext{B_{X^{**}}}\otimes \ext{B_{Y^{**}}}= \ext{B_{(X^*\iten Y^*)^*}}$ by \cite[Theorem 1.1]{ruste82} and $X^*\iten Y^*=K(X,Y^*)=L(X,Y^*)=(X\pten Y)^*$, thanks to \cite[Corollary 4.13]{Ryan}. Hence, 
$$\overline{\ext{B_{(X^*\iten Y^*)^*}}}^{w^*}= \overline{\ext{B_{(X\pten Y)^{**}}}}^{w^*}.$$
This proves (\ref{Eq:xtimesyAPEP}).

Finally, since $z$ is an APEP of $B_{X\pten Y}$, by (\ref{Eq:xtimesyAPEP}) we have $z=x^{**}\otimes y^{**}$ for  some $x^{**}\in \overline{\ext{B_{X^{**}}}}^{w^*}\subseteq B_{X^{**}}$ and $y^{**}\in \overline{\ext{B_{Y^{**}}}}^{w^*}\subseteq B_{Y^{**}}$. Moreover, thanks to Theorem \ref{theo:tensornece}, we also have $z\in B_X\otimes B_Y$. It follows easily that $x^{**}\in B_X$ and $y^{**}\in B_Y$, because $z\neq 0$. Thus, $x^{**}$ is an APEP of $B_X$ and $y^{**}$ is an APEP of $B_{Y}$, which 
concludes the proof of a). 

b) The proof follows immediately by the description of the extreme points of $B_{(X\pten Y)^{**}}$ given before.
\end{proof}

Now the following remark is pertinent.

\begin{remark}\label{remark:solucionggmr23}\hfill
\begin{enumerate}
\item In \cite[Question 3.9]{ggmr23} it is asked whether $x\otimes y$ is a preserved extreme point of $B_{X\pten Y}$ when $x\in B_X$ and $y\in B_Y$ are preserved extreme points of $B_X$ and $B_Y$ respectively. Theorem~\ref{th:xtimesyAPEPypreext} gives an affirmative answer under the assumption that $L(X,Y^*)=K(X,Y^*)$ and that either $X^*$ or $Y^*$ has the AP.

\item In connection with the above question, in \cite[Example 3.8]{ggmr23}, an equivalent norm $\vert\cdot\vert$ on $\ell_2$  and a point $x_0\in B_{(X,\vert\cdot\vert)}$ are given such that $x_0$ is a weakly strongly exposed point (in particular, it is a preserved extreme point) by a certain functional $f\in S_{X^*}$ such that $x_0\otimes x_0$ is not a weakly strongly exposed point.

It is a natural question to ask whether $x_0\otimes x_0$ is a preserved extreme point, and indeed it seems to be the first example to check in order to look for a negative answer to \cite[Question 3.9]{ggmr23} above.

Let us point out, however, that $x_0\otimes x_0$ is an extreme point. Indeed, it is not difficult to prove that $x_0\otimes x_0$ is an exposed point (by $f\otimes f$). Moreover, it can be proved that $x_0\otimes x_0\in \overline{\dent{B_{(X,\vert\cdot\vert)\pten (X,\vert\cdot\vert)}}}$, so in particular it is an APEP. If $x_0\otimes x_0$ is not a preserved extreme point, then this furnishes another example of an extreme, almost preserved-extreme point that is not preserved extreme.
\end{enumerate}
\end{remark}

We finish by extending Theorem~\ref{th:xtimesyAPEPypreext} from the case of the unit ball of a projective tensor product to the case of $\cconv(C\otimes D)$ when $C$ and $D$ have non-empty interior.

\begin{theorem}\label{theo:apepyprextinternova}
Let $X$ and $Y$ be two Banach spaces such that $L(X,Y^*)=K(X,Y^*)$ and such that either $X^*$ or $Y^*$ has the AP. Let $C\subseteq X$ and $D\subseteq Y$ be two bounded, closed, convex and symmetric subsets with non-empty interior. 
Given $z\in \cconv(C\otimes D)$ with $z\neq 0$ we have:
\begin{itemize}
    \item[a)] $z\in\ape{\cconv(C\otimes D)}$ if, and only if, $z=x\otimes y$ for some $x\in\ape{C}$ and $y\in\ape{D}$.
     \item[b)] $z\in\preext{\cconv(C\otimes D)}$ if, and only if, $z=x\otimes y$ for some $x\in\preext{C}$ and $y\in\preext{D}$.
\end{itemize}
\end{theorem}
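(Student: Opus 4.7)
The plan is to reduce directly to Theorem~\ref{th:xtimesyAPEPypreext} by passing to equivalent renormings. Since $C\subseteq X$ is symmetric, closed, convex, bounded and has non-empty interior, its Minkowski functional $p_C$ is an equivalent norm on $X$ whose closed unit ball is exactly $C$. Define $p_D$ on $Y$ analogously, and let $\tilde X=(X,p_C)$ and $\tilde Y=(Y,p_D)$. The identities $X\to\tilde X$ and $Y\to\tilde Y$ are linear isomorphisms; therefore
\[L(\tilde X,\tilde Y^*)=L(X,Y^*)=K(X,Y^*)=K(\tilde X,\tilde Y^*)\]
as sets (with equivalent norms), and the approximation property passes to $\tilde X^*$ or $\tilde Y^*$ since the AP is an isomorphic invariant. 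Thus the hypotheses of Theorem~\ref{th:xtimesyAPEPypreext} hold for the pair $\tilde X,\tilde Y$.

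Next I would verify that $\cconv(C\otimes D)$ coincides with $B_{\tilde X\pten\tilde Y}$. Indeed, the algebraic identity $X\otimes Y\to\tilde X\otimes\tilde Y$ extends to a linear isomorphism $X\pten Y\to\tilde X\pten\tilde Y$ (as they are completions under equivalent norms on the same algebraic tensor product), and the closed unit ball of $\tilde X\pten\tilde Y$ is $\cconv(B_{\tilde X}\otimes B_{\tilde Y})=\cconv(C\otimes D)$ by the standard description of the projective tensor ball. Moreover, the weak topology on $X\pten Y$ coincides with that on $\tilde X\pten\tilde Y$ since both duals are $L(X,Y^*)$, and the weak$^*$ topology on $(X\pten Y)^{**}=(\tilde X\pten\tilde Y)^{**}$ is also intrinsic. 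In particular, slices of $\cconv(C\otimes D)$ as a subset of $X\pten Y$ are precisely the slices of $B_{\tilde X\pten\tilde Y}$ as a subset of $\tilde X\pten\tilde Y$, and similarly for $C$ versus $B_{\tilde X}$ in $X$ versus $\tilde X$, and for $D$ versus $B_{\tilde Y}$.

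It then follows that the APEPs (defined via the weak topology and slices) and the preserved extreme points (defined via extreme points of the bidual weak$^*$-closure) of the three sets $C$, $D$, and $\cconv(C\otimes D)$ match those of $B_{\tilde X}$, $B_{\tilde Y}$, and $B_{\tilde X\pten\tilde Y}$ respectively. Part (a) is then an immediate consequence of Theorem~\ref{th:xtimesyAPEPypreext}(a) applied to $\tilde X$ and $\tilde Y$, and part (b) follows analogously from Theorem~\ref{th:xtimesyAPEPypreext}(b).

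The only delicate point — which is really only a matter of bookkeeping — is checking that APEPs and preserved extreme points genuinely transfer under renorming; this is clear because both notions depend solely on the weak and weak$^*$ topological structures and on convexity, all of which are preserved by linear isomorphisms. There is no new analytical obstacle beyond this reduction.
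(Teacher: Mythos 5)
Your proposal is correct and follows essentially the same route as the paper's proof: observe that symmetry, convexity and nonempty interior make $C$ and $D$ the unit balls of equivalent renormings $\tilde X$, $\tilde Y$, check that the hypotheses transfer and that $B_{\tilde X\pten\tilde Y}=\cconv(C\otimes D)$, and conclude from Theorem~\ref{th:xtimesyAPEPypreext} because the weak and weak$^*$ topologies involved are invariant under the isomorphism. The only caveat (which the paper's own wording shares) is that Theorem~\ref{th:xtimesyAPEPypreext} requires the AP of $X^*$ or $Y^*$, so your phrase ``the approximation property passes to $\tilde X^*$ or $\tilde Y^*$'' should not be read as a consequence of $X$ or $Y$ having the AP, but rather as the isomorphic invariance of the AP for whichever dual actually carries it.
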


\begin{proof}
Let us begin by observing that $0$ is an interior point of $C$ (the same holds for $D$). Indeed, take any interior point $u\in C$ and  $\delta>0$ such that $u+\delta B_X^0\subseteq C$, where $B_X^0$ stands for the open unit ball. By symmetry, we also have $-u+\delta B_X^0\subseteq C$. It follows that $\delta B_X^0 \subseteq C$ since $C$ is convex. 

Now, by the properties of $C$, we have that $C$ is the unit ball of some equivalent norm on $X$. We denote by $\tilde X$ such an equivalent renorming for which $B_{\tilde X}=C$. Similarly, we define $\tilde Y$ to be an equivalent renorming of $Y$ such that $B_{\tilde Y}=D$. Observe that by the assumptions either $\tilde X$ or $\tilde Y$ has the AP, and that $L(\tilde X,\tilde Y^*)=K(\tilde X,\tilde Y^*)$. Finally, note that
$$B_{\tilde X\pten \tilde Y}=\overline{\co}(B_{\tilde X}\otimes B_{\tilde Y})=\overline{\co}(C\otimes D).$$
Now the conclusion follows by Theorem~\ref{th:xtimesyAPEPypreext} since $X\pten Y$ and $\tilde X\pten \tilde Y$ are isomorphic Banach spaces, so the weak$^*$ topologies of their corresponding biduals are the same.
\end{proof}

\begin{remark}\label{remark:punturuesstegall}
The above result should be compared with \cite[Theorem 2.2]{ruste82}, which generalises \cite[Theorem 1.1]{ruste82} from the unit ball of the dual of a space of operators to certain weak$^*$ compact neighbourhoods of $0$ in such space.
\end{remark}

\medskip
\section{Concluding remarks and open questions}\label{section:remarks}

In this section we collect some comments and open questions which are derived from our work.

From our study of APEPs in the unit ball of $\ell_1$-sums of spaces, the following remains open.

\begin{question}
Let $\{X_i:i\in I\}$ an arbitrary infinite  family of Banach spaces and let $X$ be its $\ell_1$-sum.
If $0\in B_X$ is an APEP in $B_X$, must there exist $i$ such that $0\in X_i$ is an APEP in $B_{X_i}$?
\end{question}

For the case of $\ell_p$-sums, in Proposition~\ref{prop:APEPlpsumas} we have characterised when the norm-one elements are APEPs of the unit ball. However, we were unable to describe those APEPs whose norm is strictly smaller than 1.

\begin{question}
Let $X$ be the $\ell_p$-sum of a family $\set{X_i:i\in I}$ of Banach spaces, where $1<p<\infty$, and let $(x_i)\in B_X$ with $\Vert (x_i)\Vert<1$. When is $(x_i)$ an APEP of $B_X$?
\end{question}

Even though we do not have a complete characterisation, let us now present a sufficient condition concerning the above question.

\begin{proposition}
Let $X$ be the $\ell_p$-sum of a family $\set{X_i:i\in I}$ of Banach spaces, where $1<p<\infty$, and let $(x_i)\in B_X$ with $\Vert (x_i)\Vert<1$. Set $J:=\{i\in I: \Vert x_i\Vert\neq 0\}$. Assume that there exists $i_0\in I\setminus J$ such that $0$ is an APEP in $B_{X_{i_0}}$ and that $\frac{x_j}{\Vert x_j\Vert}$ is an APEP in $B_{X_j}$  for every $j\in J$. Then $(x_i)$ is an APEP in $B_X$.    
\end{proposition}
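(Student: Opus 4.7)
The plan is to invoke Theorem~\ref{theo:caraAPEP}: it suffices to show that $(x_i) \in \overline{\ext{B_{X^{**}}}}^{w^*}$. Since $1<p<\infty$, we have $X^{**} = (\bigoplus_{i\in I} X_i^{**})_p$, and (as in \eqref{eq:extreme_ell_p}) $z=(z_i)\in\ext{B_{X^{**}}}$ if and only if $\norm{z}=1$ and $z_i/\norm{z_i}\in\ext{B_{X_i^{**}}}$ for every $i$ with $z_i\neq 0$. Fix a basic weak$^*$ neighbourhood
\[
V = \set{z\in X^{**} : \abs{f_k(z-(x_i))}<\varepsilon,\ k=1,\ldots,n}
\]
with $f_k=(f_k^i)_i\in X^*=(\bigoplus_{i\in I} X_i^*)_q$, where $1/p+1/q=1$. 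The aim is to produce some $y\in V\cap \ext{B_{X^{**}}}$.

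Choose a finite $F\subseteq I$ containing $i_0$ such that $(\sum_{i\notin F}\norm{f_k^i}^q)^{1/q}<\varepsilon/4$ for each $k$, and set
\[
\lambda := \pare{1 - \sum_{i\in F\cap J}\norm{x_i}^p}^{1/p},
\]
which lies in $(0,1]$ because $\norm{(x_i)}<1$. For each $i\in F\cap J$, the hypothesis $x_i/\norm{x_i}\in\ape{B_{X_i}}$ together with Theorem~\ref{theo:caraAPEP} yields $e_i\in\ext{B_{X_i^{**}}}$ with $\abs{f_k^i(e_i-x_i/\norm{x_i})}<\varepsilon/(4\abs{F}\norm{x_i})$ for every $k$; analogously, $0\in\ape{B_{X_{i_0}}}$ yields $e_{i_0}\in\ext{B_{X_{i_0}^{**}}}$ with $\abs{f_k^{i_0}(e_{i_0})}<\varepsilon/(4\abs{F})$ for every $k$. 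Define $y=(y_i)$ by $y_i:=\norm{x_i}e_i$ when $i\in F\cap J$, $y_{i_0}:=\lambda e_{i_0}$, and $y_i:=0$ otherwise. Then
\[
\norm{y}^p = \sum_{i\in F\cap J}\norm{x_i}^p + \lambda^p = 1,
\]
and every non-zero coordinate of $y$ normalises to an element of the form $e_i$ or $e_{i_0}$, so $y\in\ext{B_{X^{**}}}$.

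It remains to check that $y\in V$. For each $k$, split $f_k(y-(x_i))=\sum_{i\in F}f_k^i(y_i-x_i)+\sum_{i\notin F}f_k^i(y_i-x_i)$. Each summand of the inner sum is controlled by our APEP choices (for $i\in F\cap J$ one has $\abs{f_k^i(y_i-x_i)}=\norm{x_i}\abs{f_k^i(e_i-x_i/\norm{x_i})}<\varepsilon/(4\abs{F})$; for $i=i_0$ one has $\abs{f_k^{i_0}(y_{i_0}-x_{i_0})}=\lambda\abs{f_k^{i_0}(e_{i_0})}<\varepsilon/(4\abs{F})$; for the remaining $i\in F$ the summand vanishes), giving a total bound $\abs{F}\cdot\varepsilon/(4\abs{F})=\varepsilon/4$. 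For the outer sum, $y_i=0$ whenever $i\notin F$ (since $i_0\in F$), so $y_i-x_i$ is non-zero only for $i\in J\setminus F$; Hölder then gives
\[
\Big|\sum_{i\notin F}f_k^i(y_i-x_i)\Big|\leq \pare{\sum_{i\notin F}\norm{f_k^i}^q}^{1/q}\norm{(x_i)}<\varepsilon/4,
\]
and combining the two estimates yields $\abs{f_k(y-(x_i))}<\varepsilon/2<\varepsilon$, as required.

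The main subtlety is that the weak$^*$ topology on $X^{**}$ is strictly finer than the product of the coordinate weak$^*$ topologies, so one cannot simply ``assemble'' APEPs coordinate by coordinate; the Hölder truncation is what allows the tail of each $f_k$ to be absorbed into a small error on the bounded set $B_{X^{**}}$. This is precisely why the single auxiliary index $i_0$ with $0\in\ape{B_{X_{i_0}}}$—used to fill the norm deficit $\lambda>0$—is enough to upgrade the finite-window, coordinate-wise construction into a genuine weak$^*$ approximation of $(x_i)$ by extreme points of $B_{X^{**}}$.
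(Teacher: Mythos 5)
Your proof is correct and follows essentially the same route as the paper: reduce via Theorem~\ref{theo:caraAPEP} to approximating $(x_i)$ weak$^*$ by extreme points of $B_{X^{**}}$, assemble such a point coordinatewise from extreme points of the balls $B_{X_i^{**}}$ supplied by the APEP hypotheses, and use the index $i_0$ to absorb the norm deficit so that \eqref{eq:extreme_ell_p} applies. The only difference is bookkeeping: the paper invokes the identification of the weak$^*$ topology with the coordinatewise product of weak$^*$ topologies (valid on bounded sets, which suffices here) and keeps all coordinates $j\in J$ with $y_{i_0}=\left(1-\Vert (x_i)\Vert^p\right)^{1/p}e_{i_0}$, whereas you truncate to a finite window via the H\"older tail estimate and produce a finitely supported extreme point --- a slightly more explicit verification of the same step.
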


\begin{proof} The proof will be quite similar to that of Proposition~\ref{prop:APEPlpsumas}. Let $U$ be a weak$^*$ neighbourhood of $(x_i)$ in $X^{**}$. We will show that $U$ intersects $\ext{B_{X^{**}}}$ and this will be enough by Theorem~\ref{theo:caraAPEP}. 
Since the weak$^*$ topology of $X^{**}$ is the product topology of the weak$^*$ topologies in $X_i^{**}$, we may assume that $U=\prod_{i\in I} U_i$ where $U_i$ is a weak$^*$ neighbourhood of $x_i$ (or $X_i^{**}$) for each $i\in I$. Given $j\in J$, since $x_j\neq 0$ then, by assumption and Theorem~\ref{theo:caraAPEP}, there exists $e_j\in\ext{B_{X_j^{**}}}$ such that $\norm{x_j}e_j\in U_j$. Moreover, since $0$ is an APEP of $B_{X_{i_0}}$ we may select $e_{i_0}\in \ext{B_{X_{i_0}^{**}}}$ such that $\left(1-\Vert (x_i)\Vert^p\right)^\frac{1}{p}e_{i_0}\in U_{i_0}$. Let $y=(y_i)\in X^{**}$ be defined by
$$
y_i = \begin{cases}
\norm{x_i}e_i &\text{, if } i\in J\\
\left(1-\Vert (x_i)\Vert^p\right)^\frac{1}{p}e_{i_0} & \text{, if }i=i_0\\
0 &\text{, otherwise.}
\end{cases}
$$
Note that $y\in U$ and
\[
\begin{split}\norm{y} = \pare{\sum_{i\in I}\norm{y_i}^p}^{1/p}&  = \pare{\sum_{j\in J}\norm{x_j}^p+\pare{\pare{1-\Vert (x_i)\Vert^p}^\frac{1}{p}}^p}^{1/p}\\
& =\pare{\sum_{j\in J}\Vert x_j\Vert^p+1-\sum_{j\in J}\Vert x_j\Vert^p}^\frac{1}{p} = 1,
\end{split}
\]
 so $y\in \ext{B_{X^{**}}}$ by \eqref{eq:extreme_ell_p}. This ends the proof.
\end{proof}

Another question coming from Subsection~\ref{subsection:absolutesums} is the following.

\begin{question}
Let $\{X_i:i\in I\}$ be an arbitrary infinite family of Banach spaces and let $X$ be its $\ell_\infty$-sum. Can necessary or sufficient conditions for a point $(x_i)\in B_X$ to be an APEP be given?
\end{question}

Regarding Lipschitz-free spaces, in Section~\ref{section:Lipschitzfree} we proved that every APEP of $B_{\F{M}}$ must be either a molecule or $0$, and we characterised those molecules that are APEPs. We showed that $0$ can be an APEP, but only if $M$ does not bi-Lipschitz embed in $\mathbb R^n$, and that this necessary condition is not sufficient (Examples \ref{example:0apepfree} and \ref{example:no0apepfree}). The following remains open.

\begin{question}
For which metric spaces $M$ is $0$ an APEP of $B_{\mathcal F(M)}$?
\end{question}

Finally, let us collect some open questions from our results in Section~\ref{section:tensor}. First of all, concerning Theorem~\ref{theo:tensornece} we wonder the following.

\begin{question}
In the statement of Theorem~\ref{theo:tensornece}, can we infer that if $z\neq 0$ then $x$ and $y$ can be chosen to be APEPs in $C$ and $D$ respectively?    
\end{question}

Example~\ref{example:tensorAPEPfactorno} shows that not every such representation $z=x\otimes y$ satisfies $x\in \ape{C}$ and $y\in \ape{D}$.

Our last question has to do with the possibility of removing the assumption of the existence of a compact neighbourhood system for the weak topology in Theorem~\ref{theo:tensorAPEPcompactneigh}.

\begin{question}
Let $X$ and $Y$ be Banach spaces, and let $C\subseteq X$, $D\subseteq Y$ be symmetric bounded closed convex subsets. If $x_0\in C$ and $y_0\in D$ are APEPs, is $x_0\otimes y_0$ an APEP of $\cconv(C\otimes D)$? Can we obtain this at least when $C=B_X$ and $D=B_Y$?
\end{question}

We conclude this section by presenting an alternative proof of Theorem~\ref{theo:cararnpapepsubconjuntos}. This proof, inspired by a construction due to W. Schachermayer, A. Sersouri, and E. Werner \cite{scsewe89}, offers a more geometric perspective, in contrast to the measure-theoretic approach developed in Section~\ref{section:charnp}.

We need the following elementary result, whose proof is included for completeness.

\begin{lemma}\label{lemma:disw*closure}
Let $C\subseteq X$ be a convex set and let $x\in X$ such that $x\notin\overline{C}$. Then
$$\dist(x,C)=\dist(x,\overline{C}^{w^*}),$$
where the weak* closure above is taken in $X^{**}$.    
\end{lemma}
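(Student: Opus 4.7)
\textbf{Proof plan for Lemma~\ref{lemma:disw*closure}.} The inequality $\dist(y,\overline{C}^{w^*})\leq\dist(y,C)$ is immediate because $C\subseteq\overline{C}^{w^*}$, so I would focus on the reverse bound $\dist(y,C)\leq\dist(y,\overline{C}^{w^*})$. The natural idea is to convert the hypothesis $y\notin\overline{C}$ (or rather, $y$ being at distance at least $r$ from $C$) into a Hahn-Banach separation by a functional $x^*\in X^*$, and then transfer this separation to $X^{**}$ via weak$^*$ continuity of the evaluation $z^{**}\mapsto z^{**}(x^*)$.

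The concrete execution goes as follows. Set $d:=\dist(y,C)$ (we may assume $d>0$ thanks to the hypothesis) and fix an arbitrary $0<r<d$. The open ball $U:=\{x\in X:\|x-y\|<r\}$ is a non-empty open convex set disjoint from $C$, so the Hahn–Banach separation theorem gives $x^*\in X^*$, which I may normalise to $\|x^*\|=1$, such that
\[
\sup x^*(U)\leq \inf x^*(C),
\]
that is $x^*(y)+r\leq \inf x^*(C)$. Now the halfspace
\[
H:=\left\{z^{**}\in X^{**}:z^{**}(x^*)\geq \inf x^*(C)\right\}
\]
is weak$^*$-closed in $X^{**}$ and contains $C$, so it contains $\overline{C}^{w^*}$. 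Therefore, for every $z^{**}\in\overline{C}^{w^*}$,
\[
\|z^{**}-y\|\geq (z^{**}-y)(x^*)\geq r.
\]

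Taking the infimum over $z^{**}\in\overline{C}^{w^*}$ yields $\dist(y,\overline{C}^{w^*})\geq r$, and letting $r\nearrow d$ concludes the proof. I do not foresee any real obstacle: the only delicate point is the observation that $\overline{C}^{w^*}$ lies in the weak$^*$-closed halfspace $H$, which is the standard mechanism by which a Hahn-Banach separation in $X$ automatically extends to $X^{**}$ since functionals in $X^*$ are weak$^*$-continuous on $X^{**}$.
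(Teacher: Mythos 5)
Your proposal is correct and follows essentially the same route as the paper: separate the ball $B(y,r)$ from $C$ by Hahn--Banach for each $r<\dist(y,C)$, observe that the separating functional, being weak$^*$-continuous on $X^{**}$, keeps the weak$^*$ closure of $C$ in the same halfspace, and let $r\nearrow\dist(y,C)$. Your write-up merely makes explicit the normalisation of $x^*$ and the halfspace estimate that the paper leaves implicit.
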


\begin{proof} Take any $0 < r < \dist(x,C)$. Then $\overline{C}$ and $B(x,r)$ can be separated by a hyperplane determined by some $x^* \in X^*$. Then the same hyperplane separates $\overline{B(x,r)}^{w^*}$ and $\overline{C}^{w^*}$ in $X^{**}$, thus $\dist(x,\overline{C}^{w^*}) \geq r$.
\end{proof}

Following \cite{scsewe89}, for a measurable subset $A\subseteq[0,1]$ with positive measure, we denote
\[ \mathcal F_A = \{f\in L_1[0,1] : f= f\cdot \chi_A, f \geq 0, \norm{f}_1 =1\}.\]

\begin{proof}[Second proof of Theorem \ref{theo:cararnpapepsubconjuntos}]
First, note that we may assume that $X$ is separable. Indeed, if $X$ fails the RNP then there is a separable $Y\subseteq X$ failing the RNP. Now, for a bounded closed convex set $C\subseteq Y$ and a point $x_0\in C$, we have that $x_0\in \ape{C}$ as a subset of $Y$ if and only if $x_0\in \ape{C}$ as a subset of $X$, since the weak topology in $Y$ coincides with the weak topology inherited from $X$.

Now, let $X$ be a separable Banach space failing the RNP. We will show that there is a bounded closed convex set $C\subseteq X$ with $\dist\left(X, \overline{\ext{\overline{C}^{w^*}}}^{w^*}\right)>0$ (and so, by Theorem \ref{theo:caraAPEP}, $\ape{C}=\emptyset$).

Since $X$ fails the RNP, there is a non-representable operator $T\colon L_1[0,1]\to X$. Let $(y_n)_{n=1}^\infty$ be a dense sequence in $X$. The proof of Theorem 1.1 in \cite{scsewe89} shows that there exists $\gamma >0$ and measurable subsets $D_1^n,\ldots, D_{N(n)}^n\subseteq [0,1]$ such that, if we denote $E:=\bigcap_{n=1}^\infty \bigcup_{i=1}^{N(n)} D_i^n$ and $E_i^n = E\cap D_i^n$, we have:
\begin{itemize}
\item[a)] $\dist(y_n, T(\mathcal F_{D_i^n}))>\gamma$ for all $i\in\{1,\ldots, N(n)\}$ and $n\in \mathbb N$.
\item[b)] For each $n\in\mathbb N$, every extreme point $y^{**}$ of  $\overline{T(\mathcal F_E)}^{w^*}$ belongs to $\overline{T(\mathcal F_{E_i^n})}^{w^*}$ for some $i\in \{1,\ldots, N(n)\}$.
\end{itemize}
We claim that $C=\overline{T(\mathcal F_E)}$ satisfies the desired property. First, note that $C$ is bounded, closed and convex (since $\mathcal F_E$ is a convex set). Now, let $y^{**}\in \overline{\ext{\overline{C}^{w^*}}}^{w^*}$ and take a net $(y_s^{**})_s\subseteq \ext{\overline{C}^{w^*}}$ with $y_s^{**}\stackrel{w^*}{\to} y^{**}$. Fix $n\in \mathbb N$. By property b) above, there is $i_n\in\{1,\ldots, N(n)\}$ such that the net $(y_s^{**})_s$ is frequently in $\overline{T(\mathcal F_{E_{i_n}^n})}^{w^*}$. Thus, $y^{**}\in \overline{T(\mathcal F_{E_{i_n}^n})}^{w^*}$. By Lemma \ref{lemma:disw*closure}, we get for all $n$
\begin{align*} \dist(y_n, y^{**}) &\geq \dist\left(y_n, \overline{T(\mathcal F_{E_{i_n}^n})}^{w^*}\right) = \dist(y_n,  T(\mathcal F_{E_{i_n}^n}))\\
&\geq \dist(y_n,  T(\mathcal F_{D_{i_n}^n})) > \gamma\end{align*}
and so $\dist(X, y^{**})\geq \gamma$. This proves the desired claim.  
\end{proof}

\medskip
\section*{Acknowledgements} 

The authors are grateful to Miguel Mart\'in for fruitful conversations on the topic of the paper. Moreover, the authors are deeply grateful to the anonymous referee for pointing out a mistake in a previous version of the paper and for providing a big number of suggestions which resulted in a major improvement of the exposition of the results.

R. J. Aliaga was partially supported by Grant PID2021-122126NB-C33 funded by MICIU/AEI/10.13039/501100011033 and by ERDF/EU.

The research of Luis C. García-Lirola was supported by grants PID2021-122126NB-C31 and PID2022-137294NB-I00 funded by MCIN/AEI/\\ 
10.13039/501100011033 and by ``ERDF A way of making Europe''; and by grant E48-23R funded by Diputación General de Aragón (DGA).

The research of Juan Guerrero-Viu was supported by grant PID2022-137294NB-I00 funded by MCIN/AEI/
10.13039/501100011033 and by ``ERDF A way of making Europe''; and by grant E48-23R funded by Diputación General de Aragón (DGA).

The research of Matías Raja was supported by grants PID2021-122126NB-C31 funded by MCIN/AEI/ 
10.13039/501100011033 and by ``ERDF A way of making Europe''; and by Fundaci\'on S\'eneca: ACyT Regi\'on de Murcia grant 21955/PI/22.

The research of A. Rueda Zoca was supported  by MCIU/AEI/\\ FEDER/UE Grant PID2021-122126NB-C31, by MICINN (Spain) Grant \\ CEX2020-001105-M (MCIU, AEI), by Junta de Andaluc\'{\i}a Grant FQM-0185 and by Fundaci\'on S\'eneca: ACyT Regi\'on de Murcia grant 21955/PI/22.

\medskip

\end{document}